\newtheorem{theorem}{Theorem}[section]
\newtheorem{lemma}[theorem]{Lemma}
\newtheorem{proposition}[theorem]{Proposition}
\newtheorem{corollary}[theorem]{Corollary}
\newtheorem{assumption}{Assumption}
\newcommand{\regdelta}{R_{\delta}}
\newcommand{\regdoubledelta}{R_{\delta}}
\newcommand{\Rd}{R_{\delta}}
\newcommand{\xp}{x_{\parallel}}
\newcommand{\xo}{x_\perp}
\newcommand{\pl}{\parallel}
\newcommand{\pe}{\perp}
\def\eps{\varepsilon}
\def\oeps{\eps}
\def\opsi{K}
\def\pa{\partial}
\def\na{\nabla}
\def\RR{\mathbb R}
\def\R{\mathbb R}
\def\vphi{\varphi}
\title{Local regularity result for an 
optimal transportation problem with rough measures in the plane}
\author{P.-E. Jabin\thanks{pejabin@psu.edu, Pennsylvania State University, Department of Mathematics and Huck Institutes, State College, PA 16802. Partially supported by NSF DMS Grant 161453, 1908739, 2049020 and NSF Grant RNMS (Ki-Net) 1107444.}, A. Mellet\thanks{mellet@umd.edu, University of Maryland, Department of Mathematics and CSCAMM, College Park, MD 20742  USA. Partially supported by NSF Grant NSF Grant DMS-1501067 and DMS-2009236.}, M. Molina-Fructuoso \thanks{mmolina2@ncsu.edu, North Carolina State University, Department of Mathematics, Raleigh, NC 27695 }}
\date{}
\begin{document}
\maketitle
\begin{abstract}
We investigate the properties of convex functions in $\RR^2$ that satisfy a local inequality which generalizes the notion of sub-solution of Monge-Amp\`ere equation for a Monge-Kantorovich problem with quadratic cost between {\em non-absolutely} continuous measures.  
	For each measure, we introduce a discrete scale so that the measure behaves as an absolutely continuous measure up to that scale. Our main theorem then proves that such convex functions cannot exhibit any flat part at a scale larger than the corresponding discrete scales on the measures. This, in turn, implies a $C^1$ regularity result up to the discrete scale for the Legendre transform. Our result applies in particular to any Kantorovich potential associated to an optimal transportation problem between two measures that are (possibly only locally) sums of uniformly distributed Dirac masses.
	The proof relies on novel explicit estimates directly based on the optimal transportation problem, instead of the Monge-Amp\`ere equation. 
\end{abstract}

\section{Introduction}
\subsection{Generalized Monge-Amp\`ere equation for rough measures}
%
%
%

In this paper, we investigate the properties of convex functions in $\RR^2$ that can be seen as {\em local} one-sided Kantorovich potentials. More specifically, we consider a (continuous) convex function $\psi:\RR^2\to \RR$  that satisfies 
\begin{equation} \label{eq:ineq1}
\mu(A) \leq \nu (\pa\psi(A))\quad \mbox{ for all Borel sets $A\subset \Omega$}
\end{equation}
where  $\mu$ and $\nu$ are two non-negative Radon measures on $\RR^2$ and $\Omega$ is an open bounded subset of $\RR^2$. 

We recall that the subdifferential $\pa \psi $ of the function $\psi$ is given by
$$
\pa\psi(x) = \left\{ z\in \RR^2\, ;\, \psi(y)\geq \psi(x)+z\cdot(y-x) \mbox{ for all } y\in\RR^2\right\}
$$
and that  $\pa\psi(A) = \cup_{x\in A} \pa\psi(x)$.

We note that  \eqref{eq:ineq1} is a {\em local condition} as it is only posed in  a subset $\Omega\subset \RR^2$. 
As we will explain shortly, 
Inequality \eqref{eq:ineq1} appears naturally when $\psi$ is a Kantorovich potential associated to the optimal transportation problem between two probability measures (although we do not need to require $\mu$ and $\nu$ have the same mass in our paper).

When $\mu$ and $\nu$ satisfy
\begin{equation}\label{eq:munul}
d\mu(x) \geq \frac 1 \lambda dx \quad  \mbox{ in } \Omega , \qquad d\nu(x) \leq \lambda dx\quad  \mbox{ in } \pa\psi(\Omega) 
\end{equation}
for some $\lambda>0$, then \eqref{eq:ineq1} implies that
$$ \frac 1{\lambda^2} |A| \leq  |\pa\psi(A)|  \quad \mbox{ for all Borel sets $A\subset \Omega$}$$
which says that $\psi$ is a solution (in the {\it Alexandrov sense}) of the one-sided Monge-Amp\`ere equation
\begin{equation}\label{eq:MA1} 
\det (D^2 \psi(x)) \geq \frac 1{\lambda^2}  \quad \mbox{ in } \Omega .
\end{equation}
In the case of absolutely continuous measures, inequality~\eqref{eq:ineq1} is hence a reformulation of sub-solutions to the Monge-Amp\`ere equation in terms of optimal transportation.

In dimension $2$, it is known that \eqref{eq:MA1}  implies that $\psi$ is strictly convex in $\Omega$ (see Theorem \ref{th:strictConvexity2D} below) and the strict convexity of $\psi$ is the first step in the regularity theory for the solutions of the  full Monge-Amp\`ere equation \eqref{eq:pureMongeAmpere} and the corresponding optimal transportation problem (see Section \ref{sec:ref} for further presentation of relevant results and references). In dimension $2$ and only in dimension $2$ (see below again), this is a {\em local property} in the sense that it does not require any boundary condition on $\psi$: Whenever $\psi$ satisfies \eqref{eq:MA1} on any subdomain $\Omega$, it is strictly convex in the interior of that subdomain, independently of what occurs outside of $\Omega$.  
 
The goal of this paper is to extend this result when $\psi$ satisfies \eqref{eq:ineq1} with measures $\mu$ and $\nu$ that are not necessarily absolutely continuous with respect to the Lebesgue measure and satisfy some lower and upper bounds only up to a certain scale (Assumption \ref{ass:measures} below).
This includes the case where $\mu$ and $\nu$ are sums of uniformly distributed Dirac masses.
Our main theorem  (Theorem \ref{th:flatpartbound}) implies in particular that $\psi$ is then strictly convex up to a certain scale (in Corollary \ref{cor:flat}, we derive a bound on the diameter of the flat parts of $\psi$).
Equivalently, our result says that the Legendre transform of $\psi$ is $C^1$ regular up to a certain scale (see Corollary \ref{cor:regularity}) on a subset of $\Omega$.

\medskip

{\bf Outline of the paper:} 
Our first result, Theorem \ref{thm:optimal}  in Section \ref{sec:optimal} below, makes precise the relation between inequality \eqref{eq:ineq1} and optimal transportation theory.  It is proved in Section \ref{sec:proof} and is of independent interest. The strict convexity "up to a certain scale", which is the main topic of this paper is then presented in  Section  \ref{sec:main} together with several consequences. This is followed by an overview of the existing literature and results related to our work (Section \ref{sec:ref}).
The proof of our main theorem is given in Section \ref{sec:proofthm}, while Sections   \ref{sec:proofcor1} and \ref{sec:proofcor2} are devoted to the proofs of Corollaries \ref{cor:flat} and \ref{cor:regularity}.

\medskip

\subsection{Relation to Optimal transportation problem}\label{sec:optimal}

Inequality \eqref{eq:ineq1} is natural in the context of optimal transportation theory (in any dimension $n\geq 1$). To explain this connection, we first recall that given two probability measures  $\mu \in \mathscr{P}(\RR^n)$, $\nu \in \mathscr{P}(\RR^n)$ the {\em Monge-Kantorovich Problem} with quadratic cost is concerned with the minimization problem
\begin{equation}
\label{eq:KantorovichQuad}
\min_{\pi\in \Pi(\mu,\nu)} \int_{ \RR^n \times  \RR^n} |x-y|^2  d\pi(x,y)
\end{equation}
where
$\Pi(\mu,\nu)$ denotes the set of all probability measures $\pi \in \mathscr{P}( \RR^n \times \RR^n)$ with marginals $\mu$ and $\nu$, i.e. such that $\pi(A \times  \RR^n)=\mu(A)$ for all $A \subset  \RR^n$ and $\pi( \RR^n \times B)=\nu(B)$ for all $B \subset   \RR^n$. 

The existence of a minimizer $\pi$ for problem (\ref{eq:KantorovichQuad}) (which will be called an \textit{optimal transport plan} between the measures $\mu$ and $\nu$) is a classical result, see for example \cite{villani2008optimal}. Moreover, it is known that $\pi\in \Pi(\mu,\nu)$ is a minimizer for \eqref{eq:KantorovichQuad} {\it if and only if} it is supported on the graph of the subdifferential of a lower semi-continuous convex  function $\psi$:
 \begin{equation}\label{eq:supgraph}
 \mathrm{supp}(\pi) \subset \mathrm{Graph}(\pa\psi):= \left\{(x,z)\in  \RR^n\times \RR^n\; |\; z\in \pa\psi(x)\right\} \mbox{.}
 \end{equation}
 The function $\psi$ is called a Kantorovich potential for problem (\ref{eq:KantorovichQuad}) and we can easily check that it satisfies \eqref{eq:ineq1} (globally). Indeed, for all Borel sets $A\subset \RR^n$, we can write
$$ 
\mu(A) = \int_A \int_{\RR^n} d\pi =  \int_A \int_{\pa \psi(A)} d\pi \leq  \int_{\RR^n} \int_{\pa \psi(A)} d\pi  = \nu(\pa\psi(A)).
$$
We note that this inequality might be strict in general but that we can similarly prove the inequality
\begin{equation}\label{eq:ineq2}
 \mu(\pa\psi^*(A)) \geq \nu(A)
\end{equation}
where $\psi^*$ denotes the Legendre transform defined by
$$\psi^*(z) = \sup_{x\in\RR^n} \big( x\cdot z -\psi(x)\big).$$
(indeed, the duality relation $ z\in\pa\psi(x) \Leftrightarrow x\in \pa\psi^*(z)$
together with \eqref{eq:supgraph} implies
$ \mathrm{supp}(\pi) \subset \left\{(x,z)\in  \RR^n\times \RR^n\; |\; x\in \pa\psi^*(z)\right\} .$)

\medskip

The purpose of this paper is to derive strict convexity estimates on $\psi$ when we assume only that  \eqref{eq:ineq1}  {\em holds locally} (and we do not require \eqref{eq:ineq2}). 
If we assume that both \eqref{eq:ineq1} and \eqref{eq:ineq2} hold (locally), then we get some convexity estimates on $\psi^*$, which are equivalent to $C^1$ estimates on $\psi$.

A question that arises naturally is whether assuming \eqref{eq:ineq1} and \eqref{eq:ineq2} is equivalent to assuming that $\psi$ is associated to an optimal transport plan between $\mu$ and $\nu$.
The answer is of course straightforward for the Monge-Amp\`ere equation since a  function that is a sub-solution and a super-solution is necessarily a solution. It is hence natural for \eqref{eq:ineq1} and \eqref{eq:ineq2} to imply a similar result, but the situation is more delicate. We recall in particular that the Monge-Kantorovich potential is not unique when the measures $\mu$ and $\nu$ are not absolutely continuous with respect to the Lebesgue Measure.

To the authors' knowledge, this question has not been previously addressed in the literature and we will 
show that if $\psi$ satisfies \eqref{eq:ineq1} and \eqref{eq:ineq2} globally (so in $\RR^n$) and if $\mu$ and $\nu$ have finite second moment, then $\psi$ must indeed be a Kantorovich potential associated to the optimal transportation problem (\ref{eq:KantorovichQuad}).
More precisely, we prove the following result:
\begin{theorem}\label{thm:optimal}
Let  $\mu,\;\nu$ be two probability measures on $\RR^n$ with finite second moment:
$$
\int_{\RR^n} |x|^2d\mu(x) + \int _{\RR^n} |y|^2  d\nu(y) <\infty
$$
and let $\psi$ be a proper lower semi-continuous  convex  function such that 
\eqref{eq:ineq1} and \eqref{eq:ineq2} hold for all measurable sets $A$. Then there exists an optimal plan $\pi\in \Pi(\mu,\nu)$ (minimizer of \eqref{eq:KantorovichQuad})
supported on $\Gamma = \mathrm{Graph}\,{\pa \psi}$ and the pair $(\psi,\psi^*)$ is then a minimizer for the dual problem
$$\inf \left\{ \int \psi d\mu +\int \vphi\, d\nu \, ;\, x\cdot y \leq \psi(x)+\vphi(y) \quad\forall (x,y)\right\}.$$
\end{theorem}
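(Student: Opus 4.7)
The plan is to first construct a transport plan $\pi \in \Pi(\mu,\nu)$ concentrated on $\Gamma = \mathrm{Graph}(\pa\psi)$, and then use the Young--Fenchel identity, which holds pointwise on $\Gamma$, to derive both the optimality of $\pi$ for \eqref{eq:KantorovichQuad} and the dual minimisation property of $(\psi,\psi^*)$. Because $\psi$ is proper lsc convex, the condition $y\in\pa\psi(x)$ is equivalent to the closed inequality $\psi(z)\ge\psi(x)+y\cdot(z-x)$ for every $z\in\RR^n$, so $\Gamma$ is closed in $\RR^n\times\RR^n$. Reading \eqref{eq:ineq1} as $\mu(A)\le\nu(\Gamma(A))$ with $\Gamma(A)=\pa\psi(A)$, one recognises exactly the marginal comparison condition appearing in Strassen's theorem (with its Kellerer refinement in the Polish setting). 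Applying that theorem to the closed set $\Gamma$ produces the desired probability measure $\pi\in\Pi(\mu,\nu)$ with $\pi(\Gamma)=1$.

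Next I would verify integrability and deduce optimality. Both $\psi$ and $\psi^*$ are bounded below by an affine map, so the finite first moments of $\mu$ and $\nu$ (consequences of the finite second moment hypothesis) imply $\psi_-\in L^1(\mu)$ and $\psi^*_-\in L^1(\nu)$. On $\Gamma$ the Young--Fenchel identity $\psi(x)+\psi^*(y)=x\cdot y$ holds, and the right-hand side is $\pi$-integrable by Cauchy--Schwarz, so integrating against $\pi$ yields
$$\int\psi\,d\mu+\int\psi^*\,d\nu=\int x\cdot y\,d\pi<\infty,$$
and in particular $\psi\in L^1(\mu)$ and $\psi^*\in L^1(\nu)$. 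For any competitor $\tilde\pi\in\Pi(\mu,\nu)$, the Young inequality $\psi(x)+\psi^*(y)\ge x\cdot y$ then gives
$$\int x\cdot y\,d\tilde\pi\le\int\psi\,d\mu+\int\psi^*\,d\nu=\int x\cdot y\,d\pi,$$
so $\pi$ maximises $\int x\cdot y$, which, via $|x-y|^2=|x|^2+|y|^2-2\,x\cdot y$ and the fact that the first two terms depend only on the marginals, is equivalent to minimising the quadratic cost \eqref{eq:KantorovichQuad}. The same pointwise inequality applied to any admissible dual pair $(f,g)$ with $f(x)+g(y)\ge x\cdot y$ gives $\int f\,d\mu+\int g\,d\nu\ge\int(f+g)\,d\pi\ge\int x\cdot y\,d\pi=\int\psi\,d\mu+\int\psi^*\,d\nu$, so $(\psi,\psi^*)$ attains the dual infimum.

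The main obstacle is the application of Strassen's theorem. Although \eqref{eq:ineq1} is stated for every Borel $A$, the image $\pa\psi(A)$ is in general only an analytic subset of $\RR^n$, so $\nu(\pa\psi(A))$ must be interpreted via outer measure (which is unambiguous since analytic sets are universally measurable) or one must restrict to closed $A$ and extend by approximation. A secondary remark is that hypothesis \eqref{eq:ineq2} does not enter the construction above; it is recovered a posteriori once $\pi$ is in hand, reflecting the one-sided nature of Strassen's criterion. If a more symmetric argument is desired, one can instead establish the dual minimality of $(\psi,\psi^*)$ first, using \eqref{eq:ineq1} and \eqref{eq:ineq2} together through a density/approximation argument, and then extract the optimal plan from classical Kantorovich duality.
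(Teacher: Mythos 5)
Your proposal is correct and reaches the same conclusion, but by a genuinely different route from the paper. Instead of invoking Strassen's marginal theorem, the paper proves the existence of a coupling concentrated on $\Gamma$ from scratch: Theorem~\ref{thm:sub}, with Proposition~\ref{prop:sub1} a Hall/K\"onig-type combinatorial lemma for sums of $N$ equal Dirac masses, Proposition~\ref{prop:sub2} extending it to general finite atomic measures, and the general case obtained by a dyadic-cube discretization of $\mu$, $\nu$, $\Gamma$ together with a weak-$*$ compactness argument. For the optimality and duality assertions the paper simply cites a classical result from Villani's book, whereas you verify them directly via the Young--Fenchel identity on $\Gamma$; your integrability bookkeeping (affine minorants for $\psi$ and $\psi^*$ combined with finite first moments, plus Cauchy--Schwarz for $\int x\cdot y\,d\pi$) is the right way to make this rigorous. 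Your approach buys brevity and conceptual clarity --- Strassen's theorem is tailor-made for this problem --- while the paper's buys self-containment by avoiding minimax/duality machinery at the cost of several pages of discretization. Your observation that only \eqref{eq:ineq1} enters the Strassen argument, with \eqref{eq:ineq2} recovered a posteriori, is a genuine sharpening that the paper does not make explicit, since Theorem~\ref{thm:sub} assumes both inequalities in \eqref{assumemunu}. You also correctly flag that $\pa\psi(A)$ is in general only analytic; the paper faces the same subtlety when it applies \eqref{assumemunu} to finite unions of cubes and handles it implicitly.
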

This result is completely independent from the regularity theory that we develop in the rest of this paper, but it clarifies the relation between equation \eqref{eq:ineq1} and the optimal transportation problem.
The proof (see Section \ref{sec:proof}) relies on the approximation of the measures $\mu$ and $\nu$ by sums of Dirac masses to construct a  plan $\pi\in \Pi(\mu,\nu)$ supported on $\Gamma = \mathrm{Graph}\,{\pa \psi}$ (the optimality of such a plan then follows from classical results from optimal transportation theory).

 
 \subsection{Local convexity and regularity}\label{sec:main}
 The main goal of this paper  is to develop a regularity theory when we do not assume that $\mu$ and $\nu$ are absolutely continuous with respect to the Lebesgue measure, but that they satisfy the following assumption:
\begin{assumption}
	\label{ass:measures}
	Assume that there are constants $h_1,h_2>0$ and $\lambda_1,\lambda_2>0$, such that the measures $\mu$ and $\nu$ satisfy 
	\begin{equation}
	\label{eq:MeasuresConditions}
	\mu(R) \geq \frac{|R|}{\lambda_1}   \mbox{\, and \,} \nu(R'\cap \pa\psi(\Omega)) \leq \lambda_2 |R'|
	\end{equation} 
	for any rectangles $R \subset  \Omega$, $R' \subset  \RR^2$ with dimensions at least $h_1$ and $h_2$ in every direction for $R$ and $R'$ respectively.

\end{assumption} 

This assumption is less restrictive than \eqref{eq:munul} and is
 relevant in the framework of optimal transportation. 
In fact, the original problem considered by Kantorovich in \cite{kantorovitch1958translocation}  included measures $\mu$ and $\nu$ that are sums of Dirac masses rather than absolutely continuous measures. 
This setting is also important for numerical applications:
Measures satisfying Assumption \ref{ass:measures} appear naturally when introducing discrete approximations of absolutely continuous measures with bounded densities, as is often done for computational purposes.

In order to state our main result, we introduce the set
$$
\Omega^\delta = \{ x\in\Omega\, ;\, \mathrm{dist}(x,\pa\Omega)>\delta\}.
$$
The main result of this paper is the derivation of the technical inequality \eqref{eq:flatpartbound} below, which quantify the strict convexity of $\psi$ up to a scale depending on $h_1$ and $h_2$:
\begin{theorem}[Strict convexity of Kantorovich potentials $\psi$]
	\label{th:flatpartbound}
	Let $\psi:\RR^2\to\RR$ be a convex function satisfying \eqref{eq:ineq1} for some measures $\mu$ and $\nu$ satisfying Assumption~\ref{ass:measures}.
	 Given $\delta>0$ and  $(x,y) \in \Omega^\delta \times \Omega^\delta $, let $K$ be any constant satisfying
	\begin{equation}\label{eq:defK}
	\opsi \geq \mathrm{diam} \, \partial \psi (U_{\delta})
	\end{equation}
	where $U_{\delta}$ is a $\delta$-neighborhood of the segment $[x,y]$ (i.e. $U_{\delta}=\displaystyle \cup_{z \in [x,y]} B_{\delta}(z)$) and define 
	$$ \oeps = - \min_{t\in [0,1]} \psi((1-t)x+ty) -[(1-t)\psi(x)+t\psi(y)] \geq 0 .$$
	There exists a universal constant $C$ such that if the length $\ell := |x-y|/2$ satisfies
	\begin{equation}\label{eq:ellcond}	
	\ell\geq 2\,h_1,\quad \ell^{2}\geq C\,K \,\lambda_1\,\lambda_2\,h_2\,,
	\end{equation}	
	then the following inequality holds:
	\begin{equation}
	\label{eq:flatpartbound}
	{\ell^8} \,  \log \left(1+\frac{\delta}{\gamma}\right)
	\leq C\,\lambda_1^4\,\lambda_2^4\,\opsi^8,
	\end{equation}
	provided 
	$$
	\gamma:=\max \left\{ \frac{\oeps }{\opsi},2h_1, \frac{\ell h_2}{C\,\opsi }\right\} \leq \delta/2.
	$$
\end{theorem}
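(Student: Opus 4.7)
The plan is to derive the estimate \eqref{eq:flatpartbound} by quantifying the subsolution condition $\mu(A)\le\nu(\pa\psi(A))$ on a dyadic family of thin slabs perpendicular to the segment $[x,y]$, and then summing the single-scale spreading estimates across the scales between $\gamma$ and $\delta$; the logarithm in \eqref{eq:flatpartbound} is expected to emerge from this dyadic summation rather than from any individual estimate.

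First, I would normalize. After an affine change of variables aligning $[x,y]$ with the horizontal axis, set $x=(-\ell,0)$, $y=(\ell,0)$, and subtract the chord $L$ interpolating $\psi$ between $x$ and $y$, writing $\tilde\psi:=\psi-L$. This preserves convexity, the subsolution inequality (for $\mu$ and a translate of $\nu$) and Assumption \ref{ass:measures}. In the new coordinates $\tilde\psi(\pm\ell,0)=0$, $\tilde\psi\ge-\oeps$ on $[-\ell,\ell]\times\{0\}$, and $|\nabla\tilde\psi|\le\opsi$ on $U_\delta$. A one-dimensional convexity argument along the segment then gives $|\pa_s\tilde\psi(s,0)|\le 4\oeps/\ell$ for $s$ in the middle half of $[-\ell,\ell]$, which will serve as the ``almost flat'' baseline that the iteration amplifies.

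Next, for a height $r\in[\gamma,\delta/2]$, set $R_r=[-\ell/2,\ell/2]\times[r,2r]$ and apply the subsolution inequality to $R_r$. Assumption \ref{ass:measures} yields $\mu(R_r)\ge\ell r/(2\lambda_1)$, where the required minimal-side condition is ensured by $\gamma\ge 2h_1$ and by $\ell\ge 2h_1$ built into \eqref{eq:ellcond}. For the right-hand side, enclose $\pa\tilde\psi(R_r)$ inside the rectangle $[\min_{R_r}p,\max_{R_r}p]\times[\min_{R_r}q,\max_{R_r}q]$ with $p=\pa_s\tilde\psi$, $q=\pa_r\tilde\psi$: since $|q|\le\opsi$, the vertical side has length $\le 2\opsi$, and the horizontal side is $\gtrsim h_2$ precisely when the third term $\ell h_2/(C\opsi)$ in the definition of $\gamma$ is dominated. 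Combining, the oscillation of $p$ on $R_r$ satisfies $\mathrm{osc}_{R_r}\,p\gtrsim \ell r/(\lambda_1\lambda_2\opsi)$. I would then apply this single-scale bound to the dyadic sequence of heights $r_k:=\gamma\,2^k$ for $k=0,\dots,N$ with $N\sim\log_2(\delta/\gamma)$, tracking $p$ at the two vertical lines $s=\pm\ell/4$; each scale forces a definite increment of $p$ between $r_k$ and $r_{k+1}$, while the total increment is capped by $2\opsi$. Telescoping these increments, after the careful bookkeeping that reinjects the resulting $p$-variation back into the subsolution inequality (which is where the extra powers of $\ell$ and $\opsi$ compound to the 8th power), yields \eqref{eq:flatpartbound}.

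The main obstacle lies in step three: converting local single-scale spreading into a \emph{logarithmic} rather than polynomial dependence on $\delta/\gamma$, and correctly propagating the slope information across dyadic scales while keeping all rectangles admissible for Assumption \ref{ass:measures}. The three-term threshold $\gamma=\max(\oeps/\opsi,\,2h_1,\,\ell h_2/(C\opsi))$ is finely tuned so that each of the three potential obstructions---flatness of $\tilde\psi$ at height zero, the lower measure bound on small rectangles, and the upper measure bound on small gradient images---is inactive at every scale of the dyadic iteration. Coordinating these conditions simultaneously with the slope-spreading mechanism is the technical heart of the argument, and is what pins down both the exact form of $\gamma$ and the 8th-power exponents appearing in \eqref{eq:flatpartbound}.
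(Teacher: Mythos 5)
Your approach has a genuine gap: the dyadic slab-by-slab telescoping you propose cannot produce the logarithmic factor $\log(1+\delta/\gamma)$ in \eqref{eq:flatpartbound}.

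To see why, carry out your single-scale estimate carefully. For $R_r=[-\ell/2,\ell/2]\times[r,2r]$ with $\gamma\le r\le\delta/2$, the lower bound $\mu(R_r)\ge\ell r/\lambda_1$ together with $\nu(\pa\psi(R_r))\le\lambda_2\cdot 2K\cdot \mathrm{osc}_{R_r}\,p$ yields $\mathrm{osc}_{R_r}\,p\gtrsim \ell r/(\lambda_1\lambda_2 K)$. But convexity plus the normalization (the analogue of Lemma~\ref{lemma:SubdifferentialBound}) forces $|p|\lesssim K r/\ell$ on $R_r$ whenever $r\geq\gamma\geq\oeps/K$, so $\mathrm{osc}_{R_r}\,p\lesssim K r/\ell$ as well. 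Comparing the two bounds on a \emph{single} slab already gives $\ell^2\lesssim\lambda_1\lambda_2 K^2$ — and, crucially, this is all you can get. Both the forced oscillation and the cap on $p$ scale \emph{linearly} in $r$, so the dyadic sum $\sum_k\ell r_k/(\lambda_1\lambda_2 K)$ over $r_k=2^k\gamma$ is dominated by the top scale $r\sim\delta$, not by the number of scales, and hence does not accumulate a $\log(\delta/\gamma)$ factor. Moreover, the slab estimate bounds only the oscillation of $p$ \emph{over the entire rectangle} $R_r$; there is no reason this oscillation is realized vertically, so the increments need not telescope at the two fixed lines $s=\pm\ell/4$. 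The envisioned ``reinjection and 8th-power compounding'' is not supported by any concrete mechanism here.

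The paper obtains the logarithm by a genuinely different device. It introduces a weighted double integral $\int_{\Rd\times\Rd}|T_\pe(y)-T_\pe(x)|\,\varphi(x,y)\,dy\,dx$ where $T_\pe$ is the \emph{vertical} gradient component (not $p$) and $\varphi(x,y)=(x_\pe+\gamma)^{-2}\mathbf{1}_{\{\frac12 x_\pe\le y_\pe\le 2x_\pe\}}$; the $\log$ comes from integrating $(x_\pe+\gamma)^{-1}$ over $[\gamma,\delta]$, not from counting dyadic scales. The lower bound is proved via a non-concentration argument (Lemma~\ref{lemma:etaExistence}, then the cone Lemma~\ref{lemma:subdiffangle}) showing that for \emph{any} fixed value $\xi$ a definite portion of each band has $|T_\pe-\xi|$ large — a statement stronger than slabwise oscillation. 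The eighth power is the quotient of exponents: the upper bound carries $\sqrt{\log}$ because of a Cauchy--Schwarz step exploiting the Hessian inequality $|\pa_\pl T_\pe|^2\le\pa_\pl T_\pl\cdot\pa_\pe T_\pe$, whereas the lower bound carries $\log$; dividing and squaring produces $(\ell^2/(\lambda_1\lambda_2 K^2))^4$. Neither of these two crucial structural elements — the weighted double integral and the asymmetric $\sqrt{\log}$ vs.\ $\log$ — is present in the proposed dyadic scheme.
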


We immediately make the following remarks:
\begin{enumerate}
\item The logarithm in the left hand side of \eqref{eq:flatpartbound} goes to infinity when $\gamma$ goes to zero. So Theorem~\ref{th:flatpartbound} provides a lower bound on $\gamma$ depending on the quantity $\frac{\ell^2}{C \lambda_1 \lambda_2 \opsi ^2}$. Indeed we have that either $\gamma > \delta / 2$ or  inequality (\ref{eq:flatpartbound}) provides a lower bound for $\gamma$.
So with the notations of the theorem, we see that as long as \eqref{eq:ellcond} holds, we have
	\begin{equation}
	\label{eq:flatpartbound2}
	\gamma  \geq 
	 \delta  \min\left\{  \frac{1}{\exp \left( \frac{C^4 \lambda_1^4 \lambda_2^4 K^8}{ \ell^8} \right) -1} , \frac{1}{2} \right\} \mbox{.}
	\end{equation}
Note that we can take  $K=  \mathrm {diam}\, \pa \psi (\Omega)$ which does not depends on $\ell$.
\item The conditions \eqref{eq:ellcond} are clearly satisfied in the absolutely continuous case $h_1=h_2=0$. In that case, \eqref{eq:flatpartbound2} gives a lower bound on $\gamma =\oeps/K$ and implies the strict convexity of $\psi$.  We actually recover a classical result, see Theorem \ref{th:strictConvexity2D} below.

\item The proof will make it clear that the assumption $(x,y) \in \Omega^\delta \times \Omega^\delta $ in the theorem is not necessary. The result holds for $(x,y) \in \Omega \times \Omega$ provided there is a rectangle $R_{\delta}(x,y)$, with base equal to the line segment $[x,y]$ and height equal to $\delta$ which  is contained in $\Omega$. In this setting we can also take $K=\mathrm{diam} (\partial \psi(R_{\delta}(x,y)))$.

\item As mentioned above, the conditions \eqref{eq:ellcond} are trivially satisfied when $h_1=h_2=0$. When $h_1,h_2\neq 0$, it is clear that we need some conditions on $\ell$ since we expect the potential $\psi$ to have flat parts and so $\oeps=0$ if $\ell$ is small enough.
In the simple case where $\mu$ and $\nu$ are uniformly distributed Dirac masses (on lattices of characteristic length $h_1$ and $h_2$),
 then the first condition in \eqref{eq:ellcond} is necessary to have several lattice points  in the set $U_{\delta}$, while the second condition in  \eqref{eq:ellcond} will guarantee that all those points cannot be sent onto a thin rectangle (of height $h_2$).

 \item The result is consistent with the natural scaling of the problem. For example,  
if we replace the measure $\nu$ by the new measure $\tilde \nu$ defined by
$ \tilde \nu (R) := \nu (\tau R)$ for some fixed $\tau>0$, then $\tilde \nu$ satisfies the conditions of 
Assumption \ref{ass:measures} with $\tilde h_2 = \tau^{-1} h_2$ and $\tilde \lambda_2 = \tau^2 \lambda_2$.
Furthermore, the function $\tilde \psi = \tau^{-1} \psi$ is a Kantorovich potential associated to the measures $\mu$ and $\tilde \nu$ which satisfies inequality \eqref{eq:ineq1} (with $\tilde \nu$ instead of $\nu$). One can then check that the conditions \eqref{eq:ellcond} and the inequality \eqref{eq:flatpartbound} are unchanged by these transformations.
\end{enumerate}

Theorem \ref{th:flatpartbound} provides a way to quantify how close $\psi$ is to being strictly convex. 
For instance, we can use Theorem \ref{th:flatpartbound}  to estimate the largest possible length of a "flat part" of $\psi$ by assuming that $\oeps=0$ and using \eqref{eq:flatpartbound} to get an upper bound on $\ell$.
We get:

\begin{corollary}\label{cor:flat}
Under the conditions of Theorem \ref{th:flatpartbound}, assume furthermore that 
$\eps =0$ (that is, $\psi$ is affine on the segment $[x,y]$).

If $h_1\leq \delta/4$, $\sqrt{C \lambda_1 \lambda_2} h_2 \leq \delta$ and 
\begin{equation}\label{eq:condellh2}
\frac{\ell h_2}{\opsi }\leq \frac{C \delta}{2}
\end{equation}
then 
\begin{equation}\label{eq:ellbound}
\ell \leq \max\left\{  2 h_1, \sqrt{ C \lambda_1 \lambda_2 \, \opsi h_2}, 
 \frac{\opsi \sqrt {C \lambda_1 \lambda_2}}{\left[\ln\left(\frac\delta{2 h_1}\right)\right]^{1/8}}, 
  \frac{\opsi \sqrt {C \lambda_1 \lambda_2}}{\left[ \ln \left(\frac{\delta}{\sqrt{C \lambda_1 \lambda_2} h_2}\right)\right]^{1/8}} 
\right\} \mbox{.}
\end{equation}
\end{corollary}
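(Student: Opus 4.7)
The strategy is to apply Theorem~\ref{th:flatpartbound} with $\varepsilon = 0$ and to read off the estimate \eqref{eq:ellbound} from the resulting inequality. With $\varepsilon = 0$, the parameter of the theorem reduces to $\gamma = \max\{2h_1,\; \ell h_2/(CK)\}$, and the four terms on the right-hand side of \eqref{eq:ellbound} correspond, in this order, to: the first length condition in \eqref{eq:ellcond} failing, the second length condition failing, the maximum in $\gamma$ being attained at $2h_1$, and the maximum being attained at $\ell h_2/(CK)$.

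I would first dispose of the cases in which \eqref{eq:ellcond} fails: if $\ell < 2h_1$, or if $\ell^{2} < C\,K\,\lambda_1\,\lambda_2\,h_2$, then the first or second term of \eqref{eq:ellbound} already bounds $\ell$ and there is nothing to prove. Otherwise both conditions in \eqref{eq:ellcond} hold, and the standing hypotheses $h_1 \leq \delta/4$ and \eqref{eq:condellh2} guarantee $\gamma \leq \delta/2$. Theorem~\ref{th:flatpartbound} then applies and gives
\begin{equation*}
\ell^{8}\,\log\!\Bigl(1+\frac{\delta}{\gamma}\Bigr) \leq C\,\lambda_1^{4}\,\lambda_2^{4}\,K^{8}.
\end{equation*}
Since $\delta/\gamma \geq 2$, the elementary inequality $\log(1+x)\geq \log x$ yields the cleaner estimate $\ell^{8}\,\log(\delta/\gamma) \leq C\,\lambda_1^{4}\,\lambda_2^{4}\,K^{8}$.

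If the maximum defining $\gamma$ is attained at $2h_1$, substitution and taking eighth roots immediately produce the third term of \eqref{eq:ellbound} (with a possibly enlarged universal constant). The remaining case $\gamma = \ell h_2/(CK)$ is the main obstacle, because $\ell$ then appears inside the logarithm. Here my plan is to exploit the monotonicity of the function $F(\tau) := \tau^{8}\,\log\bigl(C\delta/(\tau h_2)\bigr)$: a direct computation shows that $F$ is strictly increasing on $(0,\,C\delta/(e^{1/8}h_2))$, an interval that contains the admissible range of $\tau := \ell/K$, since the constraint $\gamma \leq \delta/2$ forces $\tau \leq C\delta/(2h_2)$. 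The inequality $F(\tau) \leq C\,\lambda_1^{4}\,\lambda_2^{4}$ therefore translates into a clean upper bound $\tau \leq \tau_1$, where $\tau_1$ is the unique root of $F = C\,\lambda_1^{4}\,\lambda_2^{4}$ in that range. To identify this implicit bound with the explicit fourth term of \eqref{eq:ellbound}, I would verify that the candidate
$$\tau^{*} := \sqrt{C\lambda_1\lambda_2}\,\bigl/\,\bigl[\log\bigl(\delta/(\sqrt{C\lambda_1\lambda_2}\,h_2)\bigr)\bigr]^{1/8}$$
(well-defined thanks to the hypothesis $\sqrt{C\lambda_1\lambda_2}\,h_2 \leq \delta$) satisfies $F(\tau^{*}) \geq C\,\lambda_1^{4}\,\lambda_2^{4}$; monotonicity of $F$ then forces $\tau \leq \tau^{*}$. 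Expanding $\log(C\delta/(\tau^{*}h_2))$ reduces this last inequality to the arithmetic task of absorbing additive $\log C$ and $\log\log$ corrections into the universal constant, which is the principal technical point of the proof.
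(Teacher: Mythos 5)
Your case decomposition matches the paper's exactly, and the first three terms of \eqref{eq:ellbound} are handled the same way. The divergence, and the source of a real difficulty, is in the last case $\gamma = \ell h_2/(C\opsi)$, where $\ell$ feeds back into the logarithm.

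The paper resolves this case with a two-step argument that you skip. After reducing to $u^8 \ln(1+C/u) \leq 1$ with $u = \ell/(\opsi\sqrt{C\lambda_1\lambda_2})$, the paper first extracts the \emph{crude} bound $u \leq C$ (because $u \mapsto u^8\ln(1+C/u)$ is increasing and exceeds $1$ at $u=C$). This caps $\gamma(u) = \sqrt{\lambda_1\lambda_2/C}\,h_2\,u \leq \sqrt{C\lambda_1\lambda_2}\,h_2$ \emph{independently of $u$}, and the logarithm in the original inequality can then be replaced by the $u$-free quantity $\ln\bigl(\delta/(\sqrt{C\lambda_1\lambda_2}h_2)\bigr)$, from which the fourth term drops out by a clean substitution.

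Your one-step route — verify $F(\tau^{*}) \geq C\lambda_1^4\lambda_2^4$ and invoke monotonicity — is not as routine as "absorbing additive $\log C$ and $\log\log$ corrections". Setting $A := \delta/(\sqrt{C\lambda_1\lambda_2}h_2) \geq 1$, your expansion gives
\[
F(\tau^{*}) = (C\lambda_1\lambda_2)^4\left[1 + \frac{\log C}{\log A} + \frac{\log\log A}{8\,\log A}\right],
\]
and as $A \to 1^{+}$ the ratio $\frac{\log\log A}{\log A} \to -\infty$, so $F(\tau^{*}) \to -\infty$ and the target inequality fails. The direct comparison therefore needs a separate case: when $A$ is close to $1$, your $\tau^{*}$ grows beyond the admissible range $\tau \leq C\delta/(2h_2)$ and the bound $\tau \leq \tau^{*}$ is trivially true, but you have not identified this dichotomy, and the crossover scale where it kicks in itself depends on the universal constant, which makes the bookkeeping genuinely delicate. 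The paper's intermediate bound $u \leq C$ is precisely the device that makes the logarithmic factor uniform and sidesteps this entire issue; I would recommend inserting that step rather than trying to push through the direct comparison.
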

We recall that we can take $K= \mathrm {diam}\, \pa\psi(\Omega)$ in which case  \eqref{eq:condellh2} reads $ \ell h_2 \leq C \mathrm {diam}\, \pa\psi(\Omega)$ and  \eqref{eq:ellbound} gives
an upper bound on $\ell$ which only depends on the data of the problem and goes to zero when $\max\{ h_1,h_2\}\to 0$.
When $h_1=h_2=0$, Corollary \ref{cor:flat} gives $\ell=0$, 
so we recover the classical result that $\psi$ must be strictly convex in that case (no flat parts).

We can also take  $\opsi= \mathrm{diam} \, \partial \psi (U_{\delta})$ (so that \eqref{eq:ellbound} is sharper) 
in which case we note that if $h_2^2 \leq \frac{\delta \ell}{\lambda_1\lambda_2}$ then we can use the estimate \eqref{eq:opsilower}, derived further in the proof,  to replace the condition \eqref{eq:condellh2} with the following condition that does not depend on $\ell$:
\[
 \sqrt {C \lambda_1 \lambda_2} h_2 \leq \frac{3\delta^{3/2}}{(\mathrm {diam}\, \Omega_2)^{1/2}}.
\]

Going back to Theorem \ref{th:flatpartbound}, we observe that the control it provides on the convexity of $\psi$ should imply some $C^1$ regularity up to some length scale depending on $h_1,h_2$ on the Legendre dual or conjugate  (see \cite{rockafellar2009variational}) defined for all $z\in\RR^2$ by
\begin{equation}
\label{eq:LegendreTransform}
\psi^*(z) = \sup_{x\in\Omega} \big( x\cdot z -\psi(x)\big).
\end{equation}
Indeed, we can show:
\begin{corollary}[$C^1$ regularity of $\psi^*$]\label{cor:regularity}
	Under the conditions of Theorem \ref{th:flatpartbound} and given $\delta>0$,
	there exists some functions $\rho(\ell)$, $\rho_1(\ell)$ and $\rho_2(\ell)$ monotone increasing, with limit $0$ when  $\ell\to 0^+$, 
	and depending only on $\delta$, $\lambda_1\lambda_2$, $D=\mathrm{diam} \,\Omega$ and $K$ such that
	for all $z,z'\in \pa \psi(\Omega^\delta)$, we have 
	\begin{equation}\label{eq:reg}
	|x-x'| \leq \max \left(\rho(|z-z'|) ,   \rho_1(  h_1),\rho_2(h_2) ,  \right) \qquad \forall x\in \pa\psi^*(z),\; x'\in\pa\psi^*(z').
	\end{equation}

In particular if $h_1=h_2=0$ then $\psi^*$ is $C^1$ in $\pa \psi(\Omega^\delta)$ with the explicit estimate on the modulus of continuity of $\nabla \psi^*$,
\begin{equation}\label{eq:oepslowerbd}
  |\nabla{\psi^*}(z)-\nabla{\psi^*}(z')| \leq  C\,\frac{\sqrt{\lambda_1\,\lambda_2}\,L_\infty}{\left(\log\left(1+\frac{1}{ C\,\sqrt{\lambda_1\,\lambda_2}\,|z-z'|}\right)\right)^{1/8}}
  \end{equation}
where $L_\infty$ now denotes the Lipschitz bound of $\psi$ over $\Omega$.
%
\end{corollary}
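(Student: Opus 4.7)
The plan is to transport the strict–convexity estimate of Theorem~\ref{th:flatpartbound} back to the dual side via the convex duality $z\in\pa\psi(x)\Leftrightarrow x\in\pa\psi^*(z)$. Fix $z,z'\in\pa\psi(\Omega^\delta)$ and choose any $x\in\pa\psi^*(z)$, $x'\in\pa\psi^*(z')$; write $\ell=|x-x'|/2$. The key observation is that small $|z-z'|$ forces $\psi$ to be nearly affine along $[x,x']$. Indeed, for $p_t=(1-t)x+tx'$, the subdifferential inequalities at $x$ and $x'$ give
\[
\psi(p_t)\ge\psi(x)+t\,z\cdot(x'-x),\qquad \psi(p_t)\ge\psi(x')-(1-t)\,z'\cdot(x'-x).
\]
Taking the convex combination with weights $(1-t,t)$ and comparing with the chord yields
\[
(1-t)\psi(x)+t\psi(x')-\psi(p_t)\le t(1-t)(z'-z)\cdot(x'-x)\le t(1-t)|z-z'||x-x'|,
\]
and maximizing over $t$ produces the crucial bound $\oeps\le \tfrac14 |z-z'||x-x'|=\tfrac12 |z-z'|\ell$.

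Next I invoke Theorem~\ref{th:flatpartbound} on the segment $[x,x']$ with $\opsi=\mathrm{diam}\,\pa\psi(\Omega)$. If $\ell<2h_1$ or $\ell^2<C\opsi\lambda_1\lambda_2 h_2$, condition \eqref{eq:ellcond} fails but then $|x-x'|$ is already bounded by $4h_1$ or by $2\sqrt{C\opsi\lambda_1\lambda_2 h_2}$, which I absorb into $\rho_1(h_1)$ and $\rho_2(h_2)$ respectively. Otherwise \eqref{eq:ellcond} holds and Theorem~\ref{th:flatpartbound} (equivalently \eqref{eq:flatpartbound2}) delivers
\[
\ell^{8}\log\!\left(1+\frac{\delta}{\gamma}\right)\le C\lambda_1^4\lambda_2^4\opsi^{8},\qquad \gamma=\max\!\left\{\frac{\oeps}{\opsi},\,2h_1,\,\frac{\ell h_2}{C\opsi}\right\},
\]
whenever $\gamma\le\delta/2$. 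The complementary regime $\gamma>\delta/2$ corresponds to $|z-z'|$, $h_1$ or $h_2$ staying bounded away from $0$, and is harmlessly covered by taking $\rho,\rho_1,\rho_2$ equal to $\mathrm{diam}\,\Omega$ there.

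Since $\gamma$ is a maximum of three quantities, at least one of them is $\ge\gamma$, and I split into three cases. When $\gamma=\oeps/\opsi$, using the bound from the first paragraph gives $\gamma\le|z-z'|\ell/(2\opsi)$ and hence
\[
\ell^{8}\log\!\left(1+\frac{2\opsi\delta}{|z-z'|\ell}\right)\le C\lambda_1^4\lambda_2^4\opsi^{8};
\]
substituting the trivial bound $\ell\le D$ inside the logarithm decouples the implicit dependence on $\ell$ and defines $\rho(|z-z'|)$. When $\gamma=2h_1$ (resp.\ $\gamma=\ell h_2/(C\opsi)$), the same manipulation with $2h_1$ in place of $\oeps/\opsi$ (resp.\ $\ell h_2/(C\opsi)$, again using $\ell\le D$ in the log) defines $\rho_1(h_1)$ and $\rho_2(h_2)$. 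In each instance the resulting function is monotone increasing in its argument and tends to $0$ at $0^+$. Recalling $|x-x'|=2\ell$ concludes \eqref{eq:reg}.

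When $h_1=h_2=0$ only the first case survives and the bound becomes fully explicit: $\ell^{8}\log(1+C'/|z-z'|)\le C(\lambda_1\lambda_2)^{4}L_\infty^{8}$ after identifying $\opsi=\mathrm{diam}\,\pa\psi(\Omega)$ with the Lipschitz constant $L_\infty$ and absorbing $D$, $\delta$, $\opsi$ into $C'$; extracting the eighth root yields \eqref{eq:oepslowerbd}. The main subtlety is the implicit dependence on $\ell$ inside the logarithm in the first and third cases, but since the logarithm depends only weakly on $\ell$, the a priori bound $\ell\le D$ is harmless. A secondary technical point is that, strictly speaking, $x\in\pa\psi^*(z)$ need not lie in $\Omega^\delta$; one handles this by applying Theorem~\ref{th:flatpartbound} via the rectangle $R_\delta(x,x')$ as in remark~3 following that theorem, which requires only that an appropriate neighborhood of $[x,x']$ sits inside $\Omega$.
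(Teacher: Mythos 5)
The proposal takes essentially the same route as the paper: pick $x\in\pa\psi^*(z)$ and $x'\in\pa\psi^*(z')$, apply Theorem~\ref{th:flatpartbound} to the pair $(x,x')$, bound $\oeps$ in terms of $|z-z'|\,|x-x'|$ via cyclical monotonicity, and then read off a modulus of continuity from \eqref{eq:flatpartbound}. Your $\oeps\le\tfrac14|z-z'||x-x'|$ computation is a self-contained (and slightly sharper) version of the paper's Lemma~\ref{lemma:sub}, and your trick of replacing $\ell$ by $D=\mathrm{diam}\,\Omega$ inside the logarithm to decouple the implicit $\ell$-dependence is a legitimate, if rougher, substitute for the paper's inversion of the monotone functions $\ell\mapsto\sigma(\ell)$ and $\ell\mapsto\sigma(\ell)/\ell$. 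The case split on $\gamma$ and the handling of \eqref{eq:ellcond} failing are also correct.

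The genuine gap is the point you dismiss at the end as a ``secondary technical point.'' You invoke Remark~3 to cover the possibility that $x\in\pa\psi^*(z)$ lies outside $\Omega^\delta$, but Remark~3 still requires a rectangle $R_\delta(x,x')$ of height $\delta$ \emph{contained in $\Omega$}, which is exactly what is not guaranteed: a priori $\pa\psi^*(z)$ can contain points anywhere in $\RR^2$, including outside $\Omega$. The paper spends the first half of its proof establishing \eqref{eq:first}, that $\pa\psi^*(z)\subset\Omega^{\delta/2}$ for all $z\in\pa\psi(\Omega^\delta)$ once $h_1,h_2$ are small enough; this is done by noting that if some $x\in\pa\psi^*(z)\setminus\Omega^{\delta/2}$ coexisted with $\bar x\in\Omega^\delta$ having $z\in\pa\psi(\bar x)$, then $\psi$ would be affine on a segment of length $\ge\delta/2$ inside $\Omega^{\delta/2}$, contradicting Corollary~\ref{cor:flat}. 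Without this step you have no right to apply Theorem~\ref{th:flatpartbound} (even via Remark~3) to the pair $(x,x')$, so the argument does not close. Incorporate the containment argument, and your proof is complete.
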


We conclude this presentation of our result by observing that in Assumption \ref{ass:measures} we only require a lower bound on $\mu$ and an upper bound on $\nu$. Such bounds are all that we need to study the strict convexity of $\psi$. 
Opposite bounds would be required to prove the $C^1$ regularity of $\psi$  up to a certain length scale (recall that $\psi^*$ is associated to an optimal transportation problem in which the roles of $\mu$ and $\nu$ are inverted). 
More precisely, if we assume  that 
 $$\mu(R\cap\Omega) \leq \lambda_2 |R| \quad \mbox{  and } \quad  \nu(R')\geq \frac{1}{\lambda_1} |R'| $$ 
for $R\subset \RR^2$ and $R'\subset \Lambda $ (up to a certain scale), then using 
inequality \eqref{eq:ineq2} instead of \eqref{eq:ineq1} our analysis yields
 the $C^1$ regularity up to a certain scale for  the potential $\psi$ on the set
 $$ 
 \Omega' = \bigcup_{\delta>0} \pa\psi^* (\Lambda^\delta)
 $$
where $\Lambda^\delta = \{ x\in\Lambda\, ;\, \mathrm{dist}(x,\pa\Lambda)>\delta\}.$

\subsection{Brief overview of the literature}\label{sec:ref}

When the measures $\mu=f\,dx$ and  $\nu = g\, dx$
are absolutely continuous and concentrated on the open sets $\Omega$ and $\Lambda$,
 a classical result due to Brenier (\cite{brenier1987decomposition, brenier1991polar}) states that the solution of the minimization problem (\ref{eq:KantorovichQuad}) is unique and is given by $\pi=(Id\times \na\psi)_{\#} \mu$,  where $\psi:\RR^n\to \RR$ is a globally Lipschitz convex function  such that
  $\na\psi _{\#} \mu=\nu $.


If furthermore, 
there exists $\lambda>0$ such that 
$1/\lambda \leq f,g\leq \lambda$ on their respective supports, then $\psi$ satisfies
\begin{equation}
\label{eq:pureMongeAmpere}
 \frac{1}{\lambda^2} \chi_{\Omega}\leq \det D^2 \psi \leq \lambda^2 \chi_{\Omega}
\end{equation}
in a weak sense (the {\it Brenier sense}) together with the  boundary condition $\na\psi(\RR^n)\subset \Lambda $ (see for instance  \cite{caffarelli1992regularity, philippis2013regularity}).

Even in that case, it is classical that the regularity of $\psi$ requires some condition on the support of $g$ (for example if $\Omega$ is connected but $\Lambda$ is not, then the map $\na \psi$ must be discontinuous).
Caffarelli proved in \cite{caffarelli1992regularity} that if we further assume that 
$\Lambda $ is {\it convex}, then $\psi$ is a strictly convex  solution of the Monge-Amp\`ere equation \eqref{eq:pureMongeAmpere}  in the following {\it Alexandrov sense}:
\begin{equation}
\label{eq:MAalex} \frac{1}{\lambda^2} | A\cap \Omega| \leq |\pa\psi(A)| \leq \lambda^2 |A\cap\Omega| \qquad \mbox{for any Borel set $A\subset \RR^n$.}
\end{equation}
The regularity theory for Monge-Amp\`ere equation developed by Caffarelli \cite{caffarelli1990localization,caffarelli1991some,caffarelli1992boundary,caffarelli1992regularity,caffarelli1996boundary} for strictly convex solutions of \eqref{eq:MAalex} then  implies that $\psi$ is $C^{1,\alpha}_{loc}$.

Even in this absolutely continuous framework, our result requires only inequality \eqref{eq:ineq1} - which is equivalent to the lower bound in \eqref{eq:MAalex} - and {\em no assumption on $\Lambda$}. We note that this inequality is always satisfied by Brenier's potential, while the upper bound in  \eqref{eq:MAalex} requires further assumptions on $\Lambda$ (e.g. convexity) to hold.
When $\Lambda$ is non-convex, the convex potential  $\psi$ cannot be expected to be $C^1$ everywhere.
However, partial regularity results have been derived that offer a useful comparison, first in dimension $2$ by Yu \cite{Yu07} and Figalli \cite{Figalli10} and then generalized to higher dimension in \cite{FigalliKim10} and to more general cost functions in \cite{philippisfigalli15}.
These results show in particular that there exists an open subset of $\Omega$   of full measure in which $\psi$ is $C^{1,\alpha}$ and strictly convex. In dimension $2$, a precise geometric description of the singular set can be given \cite{Figalli10}. Our argument provides explicit quantitative estimates in that sense that extend to non absolutely continuous measures.




In this paper, we do not need to assume that $\psi$ is associated to an optimal transportation problem with nice properties globally. We only require inequality \eqref{eq:ineq1} to hold for measures $\mu$ and $\nu$ that satisfy some lower and upper bounds in some subsets of their supports.
In dimension $3$ and higher, functions that satisfy \eqref{eq:pureMongeAmpere} in $\Omega$ might not be strictly convex, as shown by  Pogorelov's classical counterexample \cite{gutierrez2001monge} and
the regularity theory for such solutions requires  appropriate assumptions on the boundary conditions \cite{caffarelli1990localization,caffarelli1991some,caffarelli1992boundary,caffarelli1996boundary}.
However, in dimension $2$ (which is the setting of our main result), 
there is a simple proof of the strict convexity of smooth local solutions of \eqref{eq:pureMongeAmpere}, which was originally proved in \cite{aleksandrov1942smoothness} and \cite{heinz1959differentialungleichung} by Aleksandrov and Heinz independently (see also \cite{trudinger2008monge}). 
That result can be formulated as follows:
\begin{theorem} [\cite{aleksandrov1942smoothness}, \cite{heinz1959differentialungleichung}]
	\label{th:strictConvexity2D}
	For $n=2$, let $ \psi \in C^2_{loc}(\Omega)$ satisfy 
\begin{equation}\label{eq:MAS} 
\det \, D^2\psi \geq \lambda^{-2} >0 \qquad\mbox{  in $\Omega$,}
\end{equation}
and assume that $\psi \geq 0$ in $\Omega$ and $\psi (x_0)=0$ for some $x_0$ in the interior of $\Omega$. Denote $\delta:=\mbox{dist} (x_0, \partial \Omega) >0$ and let $H$ be any line passing through $x_0$.
	Then for all $\ell \leq \frac{\delta}{2}$, the quantity
	$$ \gamma = \frac { \sup_{x \in B_{\ell}(x_0) \cap H} \psi(x)}{\| \nabla \psi \|_{L^\infty(\Omega)}}.
	$$
	satisfies 
	\begin{equation}\label{eq:estcontinuous}
	\ell^2 \ln\left(1+ \frac{\delta}{\gamma}\right) \leq 8\lambda^2 \| \nabla \psi \|_{L^\infty(\Omega)} ^2.
	\end{equation}
\end{theorem}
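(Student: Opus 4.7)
The strategy is to slice $\psi$ by lines parallel to $H$, use the one-dimensional convexity of those slices together with the Monge-Amp\`ere inequality to convert the smallness of $\psi$ on $H$ into a quantitative lower bound on $\psi_{22}$ across the slices, and then integrate this bound against the global Lipschitz bound in the direction perpendicular to $H$. After translating so that $x_0=0$ and rotating so that $H=\{x_2=0\}$, the hypothesis $\psi\geq 0$ with $\psi(0)=0$ at an interior point forces $\nabla\psi(0)=0$. Write $L=\|\nabla\psi\|_{L^\infty(\Omega)}$ and choose $\delta_0$ with $\delta/2\leq\delta_0\leq\delta$ so that the rectangle $[-\ell,\ell]\times[-\delta_0,\delta_0]$ fits inside $B_\delta(0)\subset\Omega$; since $\ell\leq\delta/2$ one can take $\delta_0=\sqrt{\delta^2-\ell^2}\geq\delta\sqrt{3}/2$.

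\textbf{Slicing + Cauchy-Schwarz + Monge-Amp\`ere.} For each fixed $y\in[-\delta_0,\delta_0]$ the slice $\phi_y(s):=\psi(s,y)$ is convex in $s$; the Lipschitz bound $|\psi(s,y)-\psi(s,0)|\leq L|y|$ together with the definition of $\gamma$ gives $0\leq\phi_y(s)\leq L(\gamma+|y|)=:M(y)$ on $[-\ell,\ell]$. The standard incremental-slope bound for convex functions then yields $|\phi_y'(s)|\leq 2M(y)/\ell$ on $[-\ell/2,\ell/2]$ and therefore $\int_{-\ell/2}^{\ell/2}\phi_y''(s)\,ds\leq 4M(y)/\ell$. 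The Monge-Amp\`ere inequality $\psi_{11}\psi_{22}-\psi_{12}^2\geq\lambda^{-2}$ forces $\psi_{11}>0$ and $\psi_{22}\geq\lambda^{-2}/\psi_{11}$, so the elementary identity $\ell^2=\bigl(\int 1\,ds\bigr)^2\leq\int\psi_{11}\int\psi_{11}^{-1}$ on $[-\ell/2,\ell/2]$ produces
\[
\int_{-\ell/2}^{\ell/2}\psi_{22}(s,y)\,ds \;\geq\; \frac{1}{\lambda^2}\int_{-\ell/2}^{\ell/2}\frac{ds}{\phi_y''(s)} \;\geq\; \frac{\ell^3}{4\lambda^2 L(\gamma+|y|)}.
\]
The key conceptual step is this inversion: the flatness of $\psi$ along $H$ is traded, via Cauchy-Schwarz and the Monge-Amp\`ere reciprocal, for quantitative curvature perpendicular to $H$.

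\textbf{Vertical integration and conclusion.} Set $g(y)=\int_{-\ell/2}^{\ell/2}\psi(s,y)\,ds$ and $\tilde g(y)=g(y)+g(-y)$; then $\tilde g$ is convex, $\tilde g'(0)=0$, and the previous step gives $\tilde g''(y)\geq \ell^3/[2\lambda^2 L(\gamma+y)]$ for $y\in[0,\delta_0]$. Integrating once in $y$ yields
\[
\tilde g'(\delta_0) \;\geq\; \frac{\ell^3}{2\lambda^2 L}\log\!\left(1+\frac{\delta_0}{\gamma}\right),
\]
while the Lipschitz bound $|\psi_2|\leq L$ gives $\tilde g'(\delta_0)=g'(\delta_0)-g'(-\delta_0)\leq 2\ell L$. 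Comparing these two estimates produces $\ell^2\log(1+\delta_0/\gamma)\leq 4\lambda^2 L^2$, from which \eqref{eq:estcontinuous} follows after absorbing $\delta_0\geq\delta/2$ into the constant; the regime $\delta\lesssim\gamma$, in which the statement is essentially trivial, is handled by the universal lower bound $L\geq\delta/(2\lambda)$ obtained by applying the area formula to $\det D^2\psi\geq\lambda^{-2}$ on $B_{\delta/2}(0)$. The main obstacle is the inversion step in the previous paragraph; once that is in place, the appearance of the logarithm is automatic from integrating $1/(\gamma+y)$.
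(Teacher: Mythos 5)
Your proposal is correct and follows essentially the same route as the paper's own proof in Appendix~A: the slope bound for the convex slices $s\mapsto\psi(s,y)$ plays the role of the analogue of Lemma~\ref{lemma:SubdifferentialBound}, the Cauchy--Schwarz inversion $\ell^2\le\int\psi_{11}\int\psi_{11}^{-1}$ combined with $\psi_{22}\ge\lambda^{-2}/\psi_{11}$ is algebraically identical to the paper's $\lambda^{-2}\ell^2\le\int\psi_{11}\int\psi_{22}$, and the final integration of $1/(\gamma+|x_\perp|)$ against the Lipschitz bound in the perpendicular direction is the same closing step. The introduction of $g$, $\tilde g$ and the explicit choice of $\delta_0$ are cosmetic (and, if anything, slightly more careful about keeping the rectangle inside $\Omega$ than the paper is); the remark about $L\ge\delta/(2\lambda)$ is unnecessary since $\log(1+\delta_0/\gamma)\ge\tfrac12\log(1+\delta/\gamma)$ holds unconditionally.
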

Inequality \eqref{eq:estcontinuous} implies the following estimate:
\begin{equation}
\label{eq:strictConvexity2D}
\sup_{x \in B_{\ell}(x_0) \cap H} \psi(x) \geq \frac{\| \nabla \psi \|_{\infty} \delta}{\exp \left(\displaystyle \frac{8 \lambda^2  \| \nabla \psi \|_{\infty}^2}{ \ell^2}\right)-1}>0
\end{equation}
for all $\ell \leq \frac{\delta}{2}$.

Our Theorem \ref{th:flatpartbound} with $h_1=h_2=0$ gives a proof of this result, and in particular estimate \eqref{eq:estcontinuous}, when we assume only that $\psi$ is an {\it Alexandrov} solution of \eqref{eq:MAS}, that is  
a convex function satisfying
$$ |\pa\psi(A)| \geq \frac{1}{\lambda^2} | A\cap \Omega| \quad \mbox{ for all Borel sets in $\Omega$.}$$
But the main interest of our result is that we consider measures that may not be absolutely continuous with respect to the Lebesgue measure. In this case, the Kantorovich potential $\psi$ (which still exists but may not be unique) does not solve the Monge-Amp\`ere equation (either \eqref{eq:pureMongeAmpere} or \eqref{eq:MAalex}). 
To our knowledge, no quantitative estimates on the convex function $\psi$ are known in this setting. Brenier's result does not apply (there might not be any measurable map $T$ such that  $T _{\#} \mu=\nu $), and Kantorovich potentials are not expected to be either convex (they will have `flat parts') nor $C^1$ (they will have `corners').

Theorem \ref{th:flatpartbound} is the derivation of an inequality similar  to \eqref{eq:estcontinuous} when \eqref{eq:MAS} is replaced by \eqref{eq:ineq1} with
 measures satisfying Assumption \ref{ass:measures} with $\lambda^2=\lambda_1\lambda_2$ and $\gamma$ replaced by
$$ \max \left\{\gamma ,2h_1, \frac{\ell h_2}{C\,\opsi }\right\} .$$
This implies in particular that (in dimension $2$) any Kantorovich potential $\psi$ is strictly convex up to some scale depending on $h_1$ and $h_2$ in any open set in which the lower and upper bounds \eqref{eq:MeasuresConditions} hold.

Although our result is similar to Theorem \ref{th:strictConvexity2D}, 
the proof is completely different since we cannot use the Monge-Amp\`ere equation \eqref{eq:MAS} in this non absolutely continuous setting and instead we must rely solely on the measure inequality \eqref{eq:ineq1}.
In the groundbreaking work of Caffarelli as well as in the partial regularity theory of
 \cite{Yu07,Figalli10,FigalliKim10}, a key tool is the use of some variants of the maximum principle for the Monge-Amp\`ere equation and the use of appropriate barriers. This is not possible in our framework. 
Instead, the proof of our Theorem \ref{th:flatpartbound} relies on the derivation of upper and lower bounds for an integral quantity defined in \eqref{eq:quantityToBound}-\eqref{eq:weight}.
Note that  a variational approach (relying on optimal transportation arguments rather than using some barriers for Monge-Amp\`ere equation) to the partial regularity theory of \cite{FigalliKim10}  was recently developed in \cite{Goldman17, Goldman18}.

\medskip

\medskip

It is natural to ask whether our result could be extended to  dimension $n\geq 3$. 
It turns out that even in the absolutely continuous case, the result of Theorem \ref{th:strictConvexity2D} does not hold in dimension $3$ and higher. Indeed, a classical example by Pogorelov shows that $\psi$ can have a flat part
and is thus not necessarily strictly convex
 (see \cite{gutierrez2001monge}). 
A natural extension of Theorem \ref{th:strictConvexity2D} can however be found in \cite[Theorem 2.34]{bonsante2017equivariant} : Under conditions similar to Theorem \ref{th:strictConvexity2D}  but in dimension $n\geq 3$,
 the convex function $\psi$ cannot be affine on a set of dimension larger than or equal to $n/2$.
For the sake of completeness, we present in Appendix \ref{app:2} a short proof, based on the ideas of  \cite{bonsante2017equivariant},
of the following quantitative estimate
\begin{theorem}
	\label{th:affinebound}
	Let $n\geq 3$ and let $ \psi \in C^2$, $\psi \geq 0$ satisfy $ \det D^2\psi \geq \lambda^{-2}>0$ and assume that $\psi(x_0)=0$ with $\delta:=\mbox{dist} (x_0, \partial \Omega) >0$. Let $H$ be an affine surface of dimension $d$ passing through $x_0$, then for all $\ell \leq \frac{\delta}{2}$, the quantity
	$$ \gamma = \frac {\sup_{x \in B_{\ell}^n(x_0) \cap H} \psi(x) } {\| \nabla \psi \|_{L^\infty(\Omega_1)}} 
$$
satisfies 
	\begin{equation}\label{eq:estgammand} 
\ell^{2d}  \varphi(\delta/\gamma)  \leq C\lambda^2  \| \nabla \psi \|_{L^\infty(\Omega_1)}^n \delta^{2d-n}
\end{equation}
with $\varphi(s) := s^{2d-n}\int_0^{s}   \frac{r^{n-d-1}}{( r +1 )^d}\, dr$.
	\end{theorem}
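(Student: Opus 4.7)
The plan is to extract from $\det D^2\psi\ge\lambda^{-2}$, via the block decomposition of the Hessian, an upper and a lower bound on the same integral functional on a cylinder around $H$, and to compare them. I would translate so that $x_0=0$ and rotate so that $H=\mathrm{span}\{e_1,\ldots,e_d\}$, write $x=(y,z)$ with $y\in\RR^d$, $z\in\RR^{n-d}$, and set $L:=\|\nabla\psi\|_{L^\infty(\Omega_1)}$. The analysis is carried out on the cylinder $U_r:=B^d_{\ell/2}(0)\times B^{n-d}_r(0)$, which is contained in $B_\delta\subset\Omega_1$ for $r$ up to a fixed fraction of $\delta$, and the central object is
\[
\Phi(r):=\int_{U_r}\det C(y,z)\,dy\,dz,
\]
where $C:=D^2_z\psi$ is the lower-right $(n-d)\times(n-d)$ block of $D^2\psi$.

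The preliminary ingredient is a gradient bound in $H$-directions: since $\psi\ge 0$, $\psi(\cdot,0)\le\gamma L$ on $B^d_\ell\cap H$, and $\psi$ is $L$-Lipschitz in $z$, convexity in $y$ yields for $|y|\le\ell/2$ and all $z$ that $|\nabla_y\psi(y,z)|\le C(\gamma L+L|z|)/\ell$. The key algebraic step is the block identity $\det D^2\psi=\det A\cdot\det(C-B^TA^{-1}B)\le\det A\cdot\det C$ (with $A:=D^2_y\psi$, $B:=D^2_{yz}\psi$, the inequality from $B^TA^{-1}B\succeq 0$), which combined with $\det D^2\psi\ge\lambda^{-2}$ produces the pointwise bound $\det C(y,z)\ge\lambda^{-2}/\det A(y,z)$.

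For the upper bound on $\Phi$, slice in $y$ and apply Monge-Amp\`ere to $z\mapsto\psi(y,z)$: $\int_{|z|\le r}\det C(y,z)\,dz=|\nabla_z\psi(\{y\}\times B^{n-d}_r)|\le \omega_{n-d}L^{n-d}$, giving $\Phi(r)\le C\ell^d L^{n-d}$. For the lower bound, combine the pointwise inequality with Cauchy--Schwarz on each $z$-slice,
\[
\int_{|y|\le\ell/2}\frac{dy}{\det A(y,z)}\ge\frac{|B^d_{\ell/2}|^2}{\int_{|y|\le\ell/2}\det A(y,z)\,dy},
\]
and bound the denominator via Monge-Amp\`ere applied to $\psi(\cdot,z)$ together with the gradient estimate, yielding $\int\det A(y,z)\,dy\le CL^d(\gamma+|z|)^d/\ell^d$. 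Integrating in $z$ with polar coordinates on $\RR^{n-d}$ assembles
\[
\Phi(r)\ge C\lambda^{-2}\,\frac{\ell^{3d}}{L^d}\int_0^r\frac{s^{n-d-1}}{(\gamma+s)^d}\,ds.
\]
Comparing the two bounds at $r$ proportional to $\delta$ and extending the range to $[0,\delta]$ at the cost of a dimensional constant gives $\ell^{2d}\int_0^\delta s^{n-d-1}/(\gamma+s)^d\,ds\le C\lambda^2 L^n$, which becomes \eqref{eq:estgammand} once the substitution $s=\gamma r$ identifies the integral with $\delta^{n-2d}\varphi(\delta/\gamma)$. The main obstacle is the Cauchy--Schwarz step: it must produce exactly $|B^d_{\ell/2}|^2\sim\ell^{2d}$ in the numerator, and this factor, combined with the $\ell^{-d}$ carried by the Monge-Amp\`ere image of $\nabla_y\psi$, is precisely what produces the intermediate $\ell^{3d}$ that, when divided by the $\ell^d$ of the upper bound on $\Phi$, delivers the sharp $\ell^{2d}$ on the left of the claimed estimate.
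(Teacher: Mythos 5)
Your proposal is correct and takes essentially the same approach as the paper's Appendix B proof: both rely on the gradient bound in the $H$-directions, the block/Fischer inequality for the determinant of the Hessian, a Cauchy--Schwarz step producing the $\ell^{2d}$ factor, Monge--Amp\`ere volume estimates on $\nabla_y\psi$ and $\nabla_z\psi$ slice by slice, and the final integration in the orthogonal variable. The only differences are cosmetic: you package the comparison through the auxiliary functional $\Phi(r)$ and arrive at the pointwise bound $\det C\ge\lambda^{-2}/\det A$ via the Schur complement before applying Cauchy--Schwarz to $\int 1/\det A$, whereas the paper takes the square root of Fischer's inequality, integrates $(\det A)^{1/2}(\det C)^{1/2}$ in $y$, and applies Cauchy--Schwarz directly to that product; these two applications of Cauchy--Schwarz yield the identical slice inequality, and the remaining bookkeeping is the same.
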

We note that $\varphi$ satisfies  $\lim_{s\to\infty} \varphi(s) = \infty$ if and only if $d\geq n/2$ and so \eqref{eq:estgammand} implies the following lower bound:
\begin{equation}
	\label{eq:affinebound}
	\sup_{x \in B_{\ell}^n(x_0) \cap H} \psi(x) \geq
	\left\{
	\begin{array}{ll}
	\min\left\{ \delta  \|\nabla \psi \|_{\infty}, \left( \frac{\ell^{2d}}{C\lambda^2  \|\nabla \psi \|_{\infty}^{2n-2d}} \right)^{\frac{1}{2d-n}} \right\}
	  & \mbox{ if $d > n/2$} \\[3pt]
	\delta \|\nabla \psi \|_{\infty}  \exp \left( -C \frac{\lambda^2 \|\nabla \psi \|_{\infty}^n}{ \ell^n} \right) & \mbox{ if $d = n/2$}.
	\end{array}
	\right.
	\end{equation}

In view of this result, it seems that the main result of this paper (Theorem \ref{th:flatpartbound}) could be extended to higher dimensions, provided one considers hypersurfaces of dimension $n/2$.
However, the  basic tool of our proof, the integral quantity  \eqref{eq:quantityToBound}-\eqref{eq:weight}, is not well suited for such a generalization, and a new quantity would need to be introduced.
This question will thus be addressed in a future work.


\section{Proof of Theorem \ref{th:flatpartbound}}\label{sec:proofthm}
\subsection{Preliminaries}
The Kantorovich problem with the quadratic cost function  is invariant under rigid motions.
Up to a translation and a rotation of $\Omega$, we can thus assume that the points $x,y$ in Theorem \ref{th:flatpartbound} 
are $a:=(-\ell,0)$ and $b:=(\ell,0)$ and that the rectangle $[-\ell,\ell] \times [0,\;\delta]$ is contained in $\Omega$.

Up to subtracting an affine function, we can also assume that $\psi$ satisfies 
\begin{equation}\label{eq:convexfunc}
\psi(-\ell,0)=\psi(\ell,0)=0 \quad \mbox{ and } \quad 0\in\pa\psi([a,b]).
\end{equation}

Throughout the proofs,  $x=(\xp,\xo)$ or $y=(y_\pl,y_\pe)$ will denote  points in $\Omega\subset \RR^2$ with $\xp,\;y_\pl$ the first coordinate parallel to the segment $[a,b]$.  Similarly $z=(z_\pl,z_\pe)$ will denote a point in $\pa\psi(\Omega)\subset \RR^2$.

We will also use the following notation:
 \begin{align}
 \regdelta & =\{(\xp,\,\xo)\,|\;|\xp|\leq \ell/2,\ 0 \leq \xo\leq \delta\}.   
  \end{align}
Furthermore,
\eqref{eq:convexfunc} implies that $0\in \partial \psi (U_{\delta})$ and so for any $
\opsi \geq \mathrm{diam} \, \partial \psi (U_{\delta})$ we have
\begin{equation}\label{eq:opsi}
\opsi\geq \| \partial \psi \|_{L^{\infty}( \Rd) }= \sup_{y\in\Rd} \sup_{z\in \pa\psi(y)}|z| .
\end{equation}
We also note that
\begin{equation}\label{eq:oeps} 
\oeps := - \min_{t \in [0,1]} \psi(t a + (1-t) b)\geq 0.
\end{equation}

Throughout the proofs,  $C$ denotes a numerical constant, which depends only on the dimension $d=2$ and whose value may change from line to line in the calculations.

Before moving to the heart of the proof, we state the following simple lemma which we will use repeatedly,
\begin{lemma}
	\label{lemma:SubdifferentialBound}
	Let $\psi:[-\ell,\ell] \times [0, 2 \delta] \rightarrow \mathbb{R}$ be a convex function satisfying \eqref{eq:convexfunc} and \eqref{eq:oeps}.
	Then for all $y\in \Omega$ such that $|y_\pl|\leq \ell/2$ we have
	\[
	|z_\pl| \leq \frac{2}{\ell}\, \left(\opsi\, |y_\pe|+\oeps \right), \qquad \forall z\in \pa\psi(y).
	\]
\end{lemma}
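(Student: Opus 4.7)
The plan is to apply the subgradient inequality of $z\in\pa\psi(y)$ at the two test points $(\pm\ell,0)$, where by \eqref{eq:convexfunc} the function vanishes, and then combine the resulting two estimates to pin down both the upper and lower bound on $z_\pl$.

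More precisely, from $z\in\pa\psi(y)$ and $\psi(\pm\ell,0)=0$ we obtain, for the choices $y'=(\ell,0)$ and $y'=(-\ell,0)$ respectively,
\begin{equation*}
z_\pl(\ell-y_\pl)\;\leq\; -\psi(y)+z_\pe y_\pe,\qquad z_\pl(\ell+y_\pl)\;\geq\;\psi(y)-z_\pe y_\pe.
\end{equation*}
Under the hypothesis $|y_\pl|\leq \ell/2$ both $\ell\pm y_\pl$ are at least $\ell/2>0$, so dividing gives a two-sided bound
\begin{equation*}
|z_\pl|\;\leq\;\frac{2}{\ell}\bigl(|\psi(y)|\vee |{-\psi(y)}| + |z_\pe|\,|y_\pe|\bigr),
\end{equation*}
provided we control $-\psi(y)$ and $|z_\pe|$.

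For the lower bound on $\psi$, I would use the assumption $0\in\pa\psi([a,b])$: there exists $p\in[a,b]$ with $0\in\pa\psi(p)$, and the subgradient inequality for $\psi$ at $p$ with slope $0$ yields $\psi(y)\geq\psi(p)$ for \emph{every} $y$ in the domain. Since $p\in[a,b]$, the definition \eqref{eq:oeps} of $\oeps$ gives $\psi(p)\geq -\oeps$, so $-\psi(y)\leq \oeps$ globally. For the vertical slope $|z_\pe|$, I would simply invoke \eqref{eq:opsi}, which, by the choice of $\opsi$, bounds the full norm $|z|$ (and in particular $|z_\pe|$) by $\opsi$ on the relevant neighborhood.

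Plugging these two facts back yields precisely
$$|z_\pl|\leq \tfrac{2}{\ell}\bigl(\opsi|y_\pe|+\oeps\bigr).$$
There is essentially no obstacle here: the lemma is a purely convex-geometric statement, and the only point that requires any care is justifying the global lower bound $\psi(y)\geq -\oeps$, which is handled by exploiting a minimizing point $p\in[a,b]$ whose subdifferential contains the origin.
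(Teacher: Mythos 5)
Your proof is correct and follows essentially the same route as the paper: apply the subgradient inequality of $z\in\pa\psi(y)$ at the endpoints $(\pm\ell,0)$, use $\ell\pm y_\pl\geq\ell/2$, and invoke $\psi\geq-\oeps$ and $|z_\pe|\leq \opsi$. You are slightly more careful than the paper's write-up in one spot: the paper cites \eqref{eq:oeps} for the global bound $\psi(y)\geq-\oeps$ with no further comment, whereas you correctly observe that this step really uses the normalization $0\in\pa\psi([a,b])$ from \eqref{eq:convexfunc} to promote the minimum on $[a,b]$ to a global minimum.
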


\begin{proof}
  Consider any $y\in \Omega$ with $|y_\pl|\leq \ell/2$, $0 \leq y_\pe \leq 2\,\delta$ and any $z\in\partial\psi(y)$. 
  Then  we have by the definition of subdifferential
	\[
	\psi(b) \geq \psi(y) +z\cdot(b-y)=\psi(y)+z_\pl\cdot (b_\pl-y_\pl) +z_\pe\cdot (b_\pe-y_\pe).
	\]
 Since $b_\pl - y_\pl = \ell - y_\pl \geq \ell /2 $, and $a_\pe = 0 $, this lets us deduce that:
	\begin{align*}
	|z_\pl| 
	& \leq \frac{1}{b_\pl - y_\pl}\, \left[z_\pe\cdot (y_\pe-a_\pe) +(\psi(b)-\psi(y))\right]\\
	& \leq \frac{2}{\ell}\, \left[z_\pe\cdot y_\pe +(\psi(b)-\psi(y))\right]\\
	&  \leq \frac{2}{\ell}\left(\opsi\, |y_\pe|+\oeps \right), 
	\end{align*}
	where we have used \eqref{eq:convexfunc} so $\psi(b)= 0$, \eqref{eq:oeps} so $\psi(y)\geq -\oeps$ and the fact that $|z_\pe|\leq \opsi $ (by \eqref{eq:opsi}). 
This completes the proof of Lemma \ref{lemma:SubdifferentialBound}.
\end{proof}

We conclude these preliminaries by noting that the quantity $ \mathrm{diam} \, \partial \psi (U_{\delta})$   {\em a priori} depends on $\ell$. We obviously have
\begin{equation}\label{eq:opsiupper} 
\mathrm{diam} \, \partial \psi (U_{\delta}) \leq  \mathrm {diam}\, \pa\psi(\Omega)
\end{equation}
and we can show the following lower bound:
\begin{lemma}\label{lem:opsibound}
If $h_1 \leq \min\left\{ \delta , \ell\right\}$ and 
$h_2^2 < \frac{\delta \ell}{\lambda_1\lambda_2}$, then 
\begin{equation}\label{eq:opsilower} 
\mathrm{diam} \, \partial \psi (U_{\delta}) \geq \left( \frac{\delta \ell}{\lambda_1\lambda_2}\right)^{1/2}.
\end{equation}
\end{lemma}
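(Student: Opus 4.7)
The plan is to apply the local measure inequality \eqref{eq:ineq1} to a suitable rectangle inside $U_\delta$ and play the lower bound on $\mu$ against the upper bound on $\nu$. Using the reduction carried out in the Preliminaries, I may assume $[a,b]=[-\ell,\ell]\times\{0\}$ and that the rectangle $[-\ell,\ell]\times[0,\delta]$ is contained in $\Omega$. Since every point of this rectangle is at distance at most $\delta$ from $[a,b]$, it is also contained in $U_\delta$.

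First, I would set $A := [-\ell,\ell]\times[0,\delta]$ and apply the lower bound from Assumption \ref{ass:measures}. The rectangle $A$ has side lengths $2\ell$ and $\delta$; because $h_1\leq \min\{\ell,\delta\}$, both sides exceed $h_1$, and so
\[
\mu(A)\ \geq\ \frac{|A|}{\lambda_1}\ =\ \frac{2\ell\,\delta}{\lambda_1}.
\]
The measure inequality \eqref{eq:ineq1} then yields $\nu(\pa\psi(A))\geq 2\ell\delta/\lambda_1$. Since $A\subset U_\delta$, the set $\pa\psi(A)$ is contained in $\pa\psi(U_\delta)$, hence in a closed square $R'$ whose side length equals $D:=\mathrm{diam}\,\pa\psi(U_\delta)$.

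Next, to apply the upper bound in Assumption \ref{ass:measures} I need $R'$ to have every side of length at least $h_2$. If $D<h_2$ I would simply enlarge $R'$ to a square of side $h_2$; in all cases $\pa\psi(A)$ is contained in a square of side $\max(D,h_2)$, which (together with $A\subset\Omega$ so that $\pa\psi(A)\subset\pa\psi(\Omega)$) gives
\[
\nu(\pa\psi(A))\ \leq\ \lambda_2\,\max(D,h_2)^{2}.
\]
Combining the two bounds produces $\max(D,h_2)^{2}\geq 2\ell\delta/(\lambda_1\lambda_2)$.

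Finally, I would rule out the enlarged case using the hypothesis on $h_2$: if we had $h_2\geq D$, this would force $h_2^{2}\geq 2\ell\delta/(\lambda_1\lambda_2)>\ell\delta/(\lambda_1\lambda_2)$, contradicting the assumption $h_2^{2}<\ell\delta/(\lambda_1\lambda_2)$. Hence $D>h_2$, and
\[
\mathrm{diam}\,\pa\psi(U_\delta)\ =\ D\ \geq\ \sqrt{\tfrac{2\ell\delta}{\lambda_1\lambda_2}}\ \geq\ \sqrt{\tfrac{\ell\delta}{\lambda_1\lambda_2}},
\]
which is \eqref{eq:opsilower}. The only delicate step is this last dichotomy: the whole point of the hypothesis $h_2^{2}<\ell\delta/(\lambda_1\lambda_2)$ is precisely to ensure that the discrete scale $h_2$ cannot absorb the mass on its own, so the bounding square must actually be large.
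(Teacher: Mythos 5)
Your proof is correct and follows essentially the same argument as the paper: apply \eqref{eq:ineq1} to a rectangle contained in $U_\delta$ of sides at least $h_1$, bound $\mu$ below by Assumption \ref{ass:measures}, bound $\nu$ of the image above by enclosing $\pa\psi(U_\delta)$ in a square of side $\max(D,h_2)$, and use the hypothesis $h_2^2<\delta\ell/(\lambda_1\lambda_2)$ to rule out the case where the square must be enlarged to side $h_2$. The only cosmetic difference is that you work with the explicit rectangle $[-\ell,\ell]\times[0,\delta]$ (yielding the slightly larger constant $2\ell\delta$) rather than with $U_\delta$ directly, which changes nothing in substance.
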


\begin{proof}
Inequality \eqref{eq:ineq1} gives
		\begin{align*}
		\mu(U_\delta) \leq \nu(\partial \psi (U_\delta))  .
		\end{align*}
Since the dimensions of $U_\delta$ satisfy $ \min \{\delta, \ell\}\geq h_1$,	
Assumption  \ref{ass:measures}  implies
$$		
\mu(U_\delta)\geq \frac{\ell \delta}{\lambda_1} ,\quad  \mbox{ and } \quad \nu(\partial \psi (U_\delta)) \leq \lambda_2 \max \{(\mathrm{diam} \, \partial \psi (U_{\delta}))^2,h_2^2\}.
$$		
We deduce
$$ \frac{\ell \delta}{\lambda_1\lambda_2} \leq \max \{(\mathrm{diam} \, \partial \psi (U_{\delta})) ^2,h_2^2\}$$
and the  condition $h_2^2 < \frac{\delta \ell}{\lambda_1 \lambda_2}$ implies  (\ref{eq:opsilower}).
\end{proof}

\subsection{Proof of Theorem \ref{th:flatpartbound}}
We now describe our strategy for proving Theorem \ref{th:flatpartbound}.
First, we note that since $\psi$ is a convex function in $\Omega$, it is differentiable in a subset $\widetilde\Omega\subset \Omega$ of full measure ($|\Omega\setminus \widetilde\Omega|=0$), see for instance~\cite{rockafellar1970convex}.

We can thus define a map $T:\Omega\mapsto \RR^2$  which satisfies 
\begin{equation}\label{eq:T}
T(x):=\nabla \psi(x)\qquad  \forall x\in \widetilde\Omega.
\end{equation}

Our proof of Theorem \ref{th:flatpartbound} relies on some careful estimates (upper and lower bounds) of the integral quantity
\begin{equation}
\label{eq:quantityToBound}
\int_{\regdelta\times\regdoubledelta} |T_\pe(y)-T_\pe(x)|\, \varphi(x,y) \, dy \, dx 
\end{equation}	
where the 
weight function $\varphi(x,y)$ is given by
\begin{equation}
\label{eq:weight}
\varphi(x,y)=\frac{1}{(x_\pe+\gamma)^{2}} \mathbf{1}_{\{\frac{1}{2} x_\pe\leq y_\pe \leq 2\, x_\pe\}},
\end{equation}
for some $\gamma>0$. 
 The exponent $2$ is chosen to obtain the right logarithmic divergence in the estimates. 

Using the notations from Theorem \ref{th:flatpartbound}, we will first prove the following upper bound which does not require \eqref{eq:ineq1}:
\begin{proposition}
	\label{prop:UpperBound}
Assume that $\psi:[-\ell,\ell]\times[0,2\delta]\rightarrow \mathbb{R}$ is a convex function satisfying \eqref{eq:convexfunc}.
	Then there exists a universal constant $C>0$ s.t. the following inequality holds for all $\gamma\geq \oeps/K$
	\begin{equation}\label{eq:upperbd}
\int_{\regdelta\times\regdoubledelta} |T_\pe(y)-T_\pe(x)|\, \varphi(x,y) \, dy \, dx 
\leq C\, \opsi\, \ell^{2}\,\left( \left[\log\left(1+\frac{\delta}{\gamma}\right)\right]^{1/2} + 1 \right),
    	\end{equation}
	where we recall that  $\opsi $ and $\oeps $   satisfy  \eqref{eq:opsi} and \eqref{eq:oeps}.
\end{proposition}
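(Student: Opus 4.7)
The plan is to split
\[
T_\perp(y)-T_\perp(x)=A_1+A_2,
\]
with
\[
A_1:=T_\perp(y_\parallel,y_\perp)-T_\perp(y_\parallel,x_\perp),\quad A_2:=T_\perp(y_\parallel,x_\perp)-T_\perp(x_\parallel,x_\perp),
\]
so that the LHS of \eqref{eq:upperbd} is bounded by $I_1+I_2$ with $I_j:=\int_{R_\delta\times R_\delta}|A_j|\varphi\,dy\,dx$. The term $A_1$ encodes a pure $\perp$-increment and, by the monotonicity of $T_\perp$ in the $\perp$-direction, will yield the unconditional bound $C\opsi\ell^2$; the term $A_2$ encodes a pure $\parallel$-increment and, by a two-step Cauchy--Schwarz relying on the convexity identity $(\partial^2_{\parallel\perp}\psi)^2\leq\partial^2_\parallel\psi\cdot\partial^2_\perp\psi$ (which follows from $\det D^2\psi\geq 0$), will produce the $C\opsi\ell^2\bigl[\log(1+\delta/\gamma)\bigr]^{1/2}$ contribution.

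\textbf{Bound on $I_1$.} Integration in $x_\parallel$ gives a factor $\ell$. For fixed $y_\parallel$, convexity makes $s\mapsto T_\perp(y_\parallel,s)$ non-decreasing on $[0,\delta]$ with total mass at most $2\opsi$, so writing $\mu_{y_\parallel}$ for its Stieltjes measure, $|A_1|=\mu_{y_\parallel}\bigl([x_\perp\wedge y_\perp,\,x_\perp\vee y_\perp]\bigr)$. After Fubini, for each $s\in[0,\delta]$ the remaining integral is
\[
\int_{s/2}^{\min(2s,\delta)} \frac{w(s,x_\perp)}{(x_\perp+\gamma)^2}\,dx_\perp,
\]
where $w(s,x_\perp)$ is the length of the set $\{y_\perp\in[x_\perp/2,2x_\perp]\cap[0,\delta]:s\in[x_\perp\wedge y_\perp,\,x_\perp\vee y_\perp]\}$. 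A short case-split shows $w(s,x_\perp)\leq\min(s,x_\perp)\leq x_\perp$, so $w/(x_\perp+\gamma)^2\leq 1/(x_\perp+\gamma)$ and the integral is bounded by $\log 4$, uniformly in $s$, $\gamma$ and $\delta$. Integrating $\mu_{y_\parallel}$ and then $y_\parallel$ yields $I_1\leq C\opsi\ell^2$, with no logarithmic factor.

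\textbf{Bound on $I_2$.} The BV estimate
\[
|A_2|\leq\int_{-\ell/2}^{\ell/2}|\partial_\parallel T_\perp(t,x_\perp)|\,\mathbf 1_{t\in[x_\parallel\wedge y_\parallel,\,x_\parallel\vee y_\parallel]}\,dt
\]
combined with integration of $(x_\parallel,y_\parallel)$ over $[-\ell/2,\ell/2]^2$ produces a factor $\tfrac{\ell^2}{2}\int|\partial_\parallel T_\perp(t,x_\perp)|\,dt$; the $y_\perp$-integration together with the weight $1/(x_\perp+\gamma)^2$ contributes at most $2/(x_\perp+\gamma)$, so
\[
I_2\leq \ell^2\int_{-\ell/2}^{\ell/2}dt\int_0^\delta \frac{|\partial_\parallel T_\perp(t,x_\perp)|}{x_\perp+\gamma}\,dx_\perp.
\]
Now invoke the convexity identity $(\partial_\parallel T_\perp)^2\leq \partial_\parallel T_\parallel\cdot\partial_\perp T_\perp$, and apply Cauchy--Schwarz in $x_\perp$ to separate $\sqrt{\partial_\perp T_\perp}$ (with $L^1_{x_\perp}$-norm equal to $T_\perp(t,\delta)-T_\perp(t,0)\leq 2\opsi$) from $\sqrt{\partial_\parallel T_\parallel}/(x_\perp+\gamma)$; a second Cauchy--Schwarz in $t$ contributes a factor $\sqrt{\ell}$ and reduces the remaining task to estimating
\[
\int_0^\delta \frac{T_\parallel(\ell/2,x_\perp)-T_\parallel(-\ell/2,x_\perp)}{(x_\perp+\gamma)^2}\,dx_\perp.
\]
Lemma~\ref{lemma:SubdifferentialBound} combined with $\gamma\geq\oeps/\opsi$ bounds the numerator by $4\opsi(x_\perp+\gamma)/\ell$, and the integral then equals $\frac{4\opsi}{\ell}\log(1+\delta/\gamma)$. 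Tracking constants through the Cauchy--Schwarz applications gives $I_2\leq C\opsi\ell^2\bigl[\log(1+\delta/\gamma)\bigr]^{1/2}$, and summing with $I_1$ yields \eqref{eq:upperbd}.

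\textbf{Main obstacle.} The decisive point is obtaining the $\sqrt{\log}$ rate (rather than a full $\log$) on $I_2$; this fundamentally requires the convexity identity coming from $\det D^2\psi\geq 0$ combined with Cauchy--Schwarz first in $x_\perp$ and then in $t$. A secondary technical issue is that $\psi$ is only assumed convex (not $C^2$), so its second derivatives are a priori only matrix-valued Radon measures; the computation is first carried out for smooth convex mollifications $\psi_\eta=\psi\ast\rho_\eta$ on a slightly shrunken rectangle, and then passed to the limit $\eta\to 0^+$ using the uniform bound $\opsi$ and dominated convergence.
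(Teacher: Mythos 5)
Your proof is correct and reaches the stated bound, but the initial decomposition is genuinely different from the paper's. The paper writes $T_\perp(y)-T_\perp(x)$ as a line integral of $\nabla T_\perp$ along the straight segment $[x,y]$, splits the integrand into a $\partial_\pl T_\pe$ part and a $\partial_\pe T_\pe$ part, and then uses the change of variables $z=x+t(y-x)$ together with an explicit analysis of the resulting weights $J_1(z)$, $J_2(z)$ (which requires a symmetry argument restricting to $t\in[1/2,1]$). Your L-shaped path through the intermediate point $(y_\pl,x_\pe)$ produces the same two-term structure --- a pure $\pe$-increment and a pure $\pl$-increment --- but bypasses the change of variables and the weight computations entirely, replacing them with a direct Fubini on the Stieltjes measure of $s\mapsto T_\pe(y_\pl,s)$ for $I_1$ and a BV estimate in $x_\pl$ for $I_2$. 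After this point the two proofs coincide in their essential ingredients: the convexity inequality $(\partial_\pl T_\pe)^2\le \partial_\pl T_\pl\,\partial_\pe T_\pe$, two applications of Cauchy--Schwarz, Lemma~\ref{lemma:SubdifferentialBound} combined with $\gamma\geq\oeps/\opsi$ to extract the $\log(1+\delta/\gamma)$, and a mollification $\psi\ast\rho_\eta$ to handle the fact that $\psi$ is only convex. Your route is a bit more elementary and self-contained (no change of variables, no $J_1,J_2$ analysis); the paper's route is symmetric in $(x,y)$, which your L-path is not, though that symmetry is not actually needed for the estimate. Both yield the key asymmetry in rates: $O(\opsi\ell^2)$ for the pure $\pe$-term and $O\bigl(\opsi\ell^2[\log(1+\delta/\gamma)]^{1/2}\bigr)$ for the off-diagonal term.
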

The proof of this upper bound is fairly straightforward (see Section \ref{sec:upper}) and only makes use of the convexity of $\psi$ and  Lemma \ref{lemma:SubdifferentialBound}.

Next, we will prove the following lower bound for \eqref{eq:quantityToBound}:
\begin{proposition}
\label{prop:LowerBound}	
Let $\psi$ be a convex function satisfying \eqref{eq:ineq1} for some measure $\mu$ and $\nu$ satisfying Assumption~\ref{ass:measures}.
Assume further than $\psi$ satisfies \eqref{eq:convexfunc}. There exists a universal constant $C$ s.t. assuming that $\ell$ satisfies \eqref{eq:ellcond}, which we recall is
\[
\ell\geq 2\,h_1,\quad \ell^{2}\geq C\,\opsi\,\lambda_1\,\lambda_2\,h_2,
\]
and defining 
\begin{equation}\label{eq:gamma}
\gamma:=\max \left( \frac{\oeps }{\opsi},\;2\,h_1, \;\frac{\ell h_2}{C\,\opsi }\right),
\end{equation}
then the following inequality holds
	\begin{equation}\label{eq:LowerBound}\begin{split}
            &\int_{\regdelta\times\regdoubledelta} |T_\pe(y)-T_\pe(x)|\, \varphi (x,y)\, dy\, dx \\
            &\qquad \qquad\geq
\frac{\ell^{4}}{C\,\lambda_1\,\lambda_2\,\opsi }\,\left(1\wedge\frac{\ell^{2}}{\lambda_1\,\lambda_2\,\opsi^2 }\right)\,\log \left(\frac{1}{2}+\frac{\delta}{2\,\gamma}\right),\\
\end{split}
	  \end{equation}
provided $\gamma <\delta$ and where we recall the notation $a\wedge b=\min(a,\,b)$.
\end{proposition}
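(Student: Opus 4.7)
The plan is to dyadically decompose the $x_\pe$-integration, extract a uniform per-scale contribution from each dyadic strip, and sum over the $\sim \log_2(\delta/\gamma)$ scales to produce the logarithm on the right of \eqref{eq:LowerBound}. Set $t_k := 2^k \gamma$ for $k = 0, 1, \ldots$ with $2^K \gamma \leq \delta/2$, and consider the disjoint strips $S_k := [-\ell/2, \ell/2] \times [t_k, 2 t_k] \subset R_\delta$. For $(x,y) \in S_k \times S_k$ the indicator in $\varphi$ is automatically satisfied and $(x_\pe + \gamma)^{-2} \geq c/t_k^2$, so the left side of \eqref{eq:LowerBound} dominates $c \sum_k t_k^{-2} \int_{S_k \times S_k} |T_\pe(y)-T_\pe(x)|\,dy\,dx$, and the task reduces to a uniform lower bound for each strip integral.

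On each $S_k$, Lemma \ref{lemma:SubdifferentialBound} together with $\oeps \leq K \gamma \leq K t_k$ places $\partial \psi(S_k)$ in a rectangle $[-W_k, W_k]\times[A_k, B_k]$ with $W_k \leq C K t_k/\ell$. Since $\ell, t_k \geq 2 h_1$, Assumption \ref{ass:measures} yields $\mu(S_k) \geq \ell t_k/\lambda_1$; combining this with \eqref{eq:ineq1} and the upper $\nu$-bound on the image rectangle (whose first dimension $2 W_k$ exceeds $h_2$ by the second condition in \eqref{eq:ellcond}) gives $\ell t_k/\lambda_1 \leq 2\lambda_2 W_k \max(B_k - A_k, h_2)$. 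The $h_2$ alternative is excluded by \eqref{eq:ellcond}, leaving the vertical spread $B_k - A_k \gtrsim \ell^2/(\lambda_1 \lambda_2 K)$. Capped by the trivial bound $B_k - A_k \leq 2K$ coming from \eqref{eq:opsi}, the effective spread is $K \cdot \min(1, \ell^2/(\lambda_1 \lambda_2 K^2))$, the source of the $1 \wedge \ell^2/(\lambda_1 \lambda_2 K^2)$ factor in \eqref{eq:LowerBound}.

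The delicate remaining step is turning the vertical spread into a lower bound for the strip integral. The layer-cake identity rewrites $\int_{S_k \times S_k} |T_\pe(y)-T_\pe(x)|\,dy\,dx = 2\int_{A_k}^{B_k} F(c)(|S_k|-F(c))\,dc$ with $F(c) := |\{T_\pe \leq c\}\cap S_k|$, and the goal is to establish an effective Lipschitz bound $F(c+\alpha) - F(c) \lesssim \lambda_1 \lambda_2 W_k \alpha$ for $\alpha \gtrsim h_2$. Reapplying \eqref{eq:ineq1} to the slab $\{T_\pe \in (c, c+\alpha]\} \cap S_k$, whose image sits in $[-W_k, W_k] \times (c, c+\alpha]$, produces the $\mu$-estimate $\mu(\mathrm{slab}) \leq 2\lambda_2 W_k \alpha$. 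The main obstacle is transferring this $\mu$-bound to a Lebesgue bound: I plan to exploit the subgraph structure of each slab — a consequence of the monotonicity of $x_\pe \mapsto T_\pe(x_\pl, x_\pe)$ inherited from convexity of $\psi$ — together with an $h_1$-grid covering that inscribes disjoint rectangles in the slab, invoking the lower bound $\mu(R) \geq |R|/\lambda_1$ from Assumption \ref{ass:measures} on each cell, with the $h_1$-scale boundary error absorbed using $h_1 \leq \gamma/2 \leq t_k/2$. With $F$ effectively Lipschitz with constant $L \lesssim \lambda_1 \lambda_2 W_k$, the change of variables $u = F(c)$ gives $\int_{A_k}^{B_k} F(|S_k|-F)\,dc \geq |S_k|^3/(6L) \gtrsim \ell^4 t_k^2 / (\lambda_1 \lambda_2 K)$, and dividing by $t_k^2$ and summing over the $\sim \log_2(\delta/\gamma)$ scales yields \eqref{eq:LowerBound}.
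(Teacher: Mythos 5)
Your strategy — a dyadic decomposition in $x_\pe$, a per-strip lower bound, and the layer-cake identity — is genuinely different from the paper's proof, which proceeds by constructing a half-cone inside $\Omega_{x_\pe,\eta}$ (the set where $T_\pe$ is \emph{far} from a reference value $\xi$) via Lemma~\ref{lemma:etaExistence}, Lemma~\ref{lemma:subdiffangle}, and Lemma~\ref{lemma:SetLowerBound}. Your first two paragraphs are sound: the dyadic extraction of the logarithm, the localization of $\partial\psi(S_k)$ into a narrow vertical rectangle via Lemma~\ref{lemma:SubdifferentialBound}, and the lower bound on the vertical spread $B_k-A_k\gtrsim \ell^2/(\lambda_1\lambda_2 K)$ coming from \eqref{eq:ineq1} plus Assumption~\ref{ass:measures} all check out and are parallel to what the paper does in Lemma~\ref{lemma:etaExistence}.

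The gap is in the transfer step: you want to convert the $\mu$-bound $\mu(\{c<T_\pe\le c+\alpha\}\cap S_k)\le 2\lambda_2 W_k\alpha$ into a Lebesgue bound $|\{c<T_\pe\le c+\alpha\}\cap S_k|\lesssim \lambda_1\lambda_2 W_k\alpha$, by inscribing $h_1$-rectangles into the slab and applying the lower bound on $\mu$ cell by cell. The monotonicity of $x_\pe\mapsto T_\pe(x_\pl,x_\pe)$ does give the slab a "subgraph minus subgraph" shape $\{f(x_\pl)<x_\pe\le g(x_\pl)\}$, but that alone puts no quantitative constraint on how fast $f$ and $g$ vary in $x_\pl$. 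If the slab oscillates rapidly at a spatial scale $\ll h_1$ — a thin wavy ribbon of width below $h_1$ — it contains no inscribed $h_1\times h_1$ cells, the $\mu$-lower bound returns nothing, and the entire slab is "boundary layer," so the error is $|V|$ itself rather than something of order $\ell h_1$. In other words, the effective Lipschitz bound on $F(c)=|\{T_\pe\le c\}\cap S_k|$ that your layer-cake step requires does not follow from what you invoke. What rules out such oscillation is precisely the \emph{full} cyclical monotonicity of $\partial\psi$: for $x,y$ nearly vertically aligned, $(T(x)-T(y))\cdot(x-y)\ge 0$ forces $T_\pe(x)-T_\pe(y)$ to be controlled by $W_k\,|\tan\angle|$, which is the content of the paper's Lemma~\ref{lemma:subdiffangle}. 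That lemma, and the ensuing cone construction of Lemma~\ref{lemma:SetLowerBound}, are exactly the ingredient that converts the "vertical spread" information into a measure lower bound on the set where $T_\pe$ deviates; your sketch would need an analogous statement to make the inscribed-rectangle covering legitimate. As written, the step from "$\mu(\mathrm{slab})$ is small" to "$|\mathrm{slab}|$ is small" is not justified, and the argument does not go through.
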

The proof of this proposition, which is presented in Section \ref{sec:lower}, is more delicate. 
This is where we use the fact that $\psi$ satisfy the Monge-Amp\`ere like condition \eqref{eq:ineq1} with measures $\mu$ and $\nu$ satisfying~\eqref{eq:MeasuresConditions}.

\medskip

\begin{proof}[Proof of Theorem \ref{th:flatpartbound}]
The key to conclude the proof of Theorem \ref{th:flatpartbound} is that the bounds provided by Propositions \ref{prop:UpperBound} and \ref{prop:LowerBound} scale differently in $\ell$ and $\gamma$. Combining the two will hence naturally lead either to an upper bound on $\ell$ or to a lower bound on $\gamma$. More precisely we directly obtain from \eqref{eq:upperbd}  and \eqref{eq:LowerBound} that
\[
\frac{\ell^{2}}{\lambda_1\,\lambda_2\,\opsi^{2}}\,\left(1\wedge\frac{\ell^{2}}{\lambda_1\,\lambda_2\,\opsi^{2}}\right)\,\log \left(\frac12+\frac{\delta}{2\,\gamma}\right)\leq C\, \left( \left[\log \left(1+\frac{\delta}{\gamma}\right) \right]^{1/2} + 1 \right).
\]
Since we assumed in the theorem that $\delta\geq 2\,\gamma$, we have $\log \left(1+\frac{\delta}{\gamma}\right)\leq C\,\log \left(\frac12+\frac{\delta}{2\,\gamma}\right)$ so that we can simplify the inequality above to:
\begin{equation}\label{eq:ghgj}
\frac{\ell^{2}}{\lambda_1\,\lambda_2\,\opsi^{2}\, }\,\left(1\wedge\frac{\ell^{2}}{\lambda_1\,\lambda_2\,\opsi^2}\right)\,\left[\log \left(1+\frac{\delta}{\gamma}\right) \right]^{1/2}\leq C \left( 1 + \left[\log \left(1+\frac{\delta}{\gamma}\right) \right]^{-1/2}  \right) \leq C.
\end{equation}
Moreover we also get $\log \left(1+\frac{\delta}{\gamma}\right)\geq \log 3$ (still using the assumption that $\delta\geq 2\,\gamma$) so \eqref{eq:ghgj} gives
\[
\frac{\ell^{2}}{\lambda_1\,\lambda_2\,\opsi^{2}} \left( 1 \wedge \frac{\ell^2}{\lambda_1 \lambda_2 K^2} \right) \leq C [\log 3]^{-1/2},
\]
which can be used to show that
\[
\left( \frac{\ell^2}{\lambda_1 \lambda_2 K^2} \wedge \left(\frac{\ell^2}{\lambda_1 \lambda_2 K^2} \right)^2  \right) \geq C  \left(\frac{\ell^2}{\lambda_1 \lambda_2 K^2} \right)^2.
\]
for some (different) constant $C$.
Together with \eqref{eq:ghgj}, this finally implies
$$
\left(\frac{\ell^{2}}{\lambda_1\,\lambda_2\,\opsi^2}\right)^2\,\left[\log \left(1+\frac{\delta}{\gamma}\right) \right]^{1/2}\leq C 
$$
which completes the proof of Theorem \ref{th:flatpartbound}.
\end{proof}

\subsection{Upper bound: Proof of Proposition \ref{prop:UpperBound}}\label{sec:upper}
\begin{proof}[Proof of Proposition \ref{prop:UpperBound}]
We first assume that $\psi$ is $C^2$ so that all the computations below make sense.
We can write	
\begin{align}
 & \int_{\regdelta\times\regdoubledelta} |T_\pe(y)-T_\pe(x)|\, \varphi(x,y) \, dy\, dx  \nonumber \\
&\qquad\qquad = \int_{\regdelta\times \regdoubledelta}  \left|\int_{0}^{1} \nabla T_\pe (x+t(y-x))\cdot (y-x) \,dt\,  \right| \varphi(x,y)\, dy\, dx\nonumber \\
&\qquad\qquad \leq  \int_{\regdelta\times\regdoubledelta}  \int_{0}^{1} \left|\partial_\pl T_\pe (x+t(y-x))\cdot (y_\pl-x_\pl) \right|\, dt \,   \varphi(x,y)\,dy\,dx\nonumber \\
  &\qquad\qquad \quad  + \int_{\regdelta\times \regdoubledelta} \int_{0}^{1} \left|\partial_\pe T_\pe(x+t(y-x)) \cdot (y_\pe-x_\pe) \right|\, dt \,  \varphi(x,y)\,dy\,dx,\nonumber
\end{align}
where $\partial_\pl$ denotes the derivative with respect to the first component and $\partial_\pe$ is the derivative in the orthogonal direction. Using the symmetry of the expression in $x$ and $y$, we have
\begin{align}
 & \int_{\regdelta\times\regdoubledelta} |T_\pe(y)-T_\pe(x)| \varphi(x,y) \, dy\, dx  \nonumber \\
&\leq  2\,\int_{\regdelta\times\regdoubledelta}  \int_{1/2}^{1} \left|\partial_\pl T_\pe (x+t(y-x))\cdot(y_\pl-x_\pl) \right|\, dt \,   \varphi(x,y)\,dy\,dx\nonumber \\
&\qquad\qquad \quad  + 2\,\int_{\regdelta\times\regdoubledelta} \int_{1/2}^{1} \left|\partial_\pe T_\pe(x+t(y-x))\cdot (y_\pe-x_\pe) \right|\, dt \,  \varphi(x,y)\,dy\,dx,\label{eq:TT}
\end{align}
To bound the first term in  the right-hand side, 
we note that by definition of $R_\delta$, $|y_\pl-x_\pl|\leq \ell$ so that
using the change of variable $y\to z=x+t(y-x)$
\begin{align}
& \int_{\regdelta\times\regdoubledelta}  \int_{1/2}^{1} \left|\partial_\pl T_\pe (x+t(y-x))\cdot (y_\pl-x_\pl) \right|\,   \varphi(x,y)\, dt  \,dy\,dx\nonumber \\
&\qquad\qquad   \leq  \ell\, \int_{\regdelta}  \int_{1/2}^{1}   \int_{\regdoubledelta} \left|\partial_\pl T_\pe (x+t(y-x)) \right| \,    \varphi(x,y)\,dy\, dt\,dx \nonumber\\
&\qquad\qquad   \leq  \ell\, \int_{\regdelta}  \int_{1/2}^{1}   \int_{\regdoubledelta} \left|\partial_\pl T_\pe (z) \right|\,  \mathbf{1}_{x+\frac{z-x}{t}\in \Rd}   \varphi\left(x,x+\frac{z-x}{t}\right)\frac{1}{t^d}\,dz\, dt\,dx \nonumber \\
& \qquad\qquad  \leq  \ell \int_{\regdoubledelta} \left|\partial_\pl T_\pe (z)  \right| J_1(z) \,dz.\label{eq:term1}
\end{align}
Using the definition of $\varphi(x,y)$ (see \eqref{eq:weight})
and the notation
\[
\Omega_{x_\pe} =\left\{ y \in [\ell/2,\ell/2] \times [0, \delta] ;\;   \frac{1}{2}\, x_\pe \leq y_\pe \leq 2\,x_\pe  \right\},
\]
we get that the weight $J_1(z)$ is equal to
\begin{align}
 J_1(z) & = 2\,\int_{1/2}^{1}   \int_{\regdelta}     {\bf 1}_{x+\frac{z-x}{t}\in \Rd}   \varphi\left(x,x+\frac{z-x}{t}\right)\,dx \, dt\nonumber\\
& =
2\, \int_{1/2}^{1}    \int_{\regdelta}  \frac{1}{(|x_\pe|+\gamma)^{2}}   {\bf 1}_{x+\frac{z-x}{t}\in  \Omega_{x_\pe} }     \,dx \, dt  .
\nonumber 
\end{align}

 Observe that the definition of $\Omega_{x_\pe}$ is actually symmetric on $R_\delta$: $y\in \Omega_{x_\pe}$ iff $x\in \Omega_{y_\pe}$ since $x_\pe \geq 0$.  Consequently  $z\in \Omega_{x_\pe}$ implies that $x\in \Omega_{z_\pe}$ as $x\in R_\delta$ and 
\[\begin{split}
J_1(z)&\leq \frac{C}{(|z_\pe|+\gamma)^{2}}\,\int_{1/2}^{1}    \int_{\regdelta} {\bf 1}_{x+\frac{z-x}{t}\in  \Omega_{x_\pe} }     \,dx \, dt\leq \frac{C}{(|z_\pe|+\gamma)^{2}}\,\int_{\Omega_{z_\pe}}\,dx\\
&\leq \frac{C}{(|z_\pe|+\gamma)^{2}}\,\ell\,|z_\pe|\leq \frac{C\,\ell}{(|z_\pe|+\gamma)} . 
\end{split}
\]
Going back to  \eqref{eq:term1}, we find
\begin{equation}\label{eq:term11}
\int_{\regdelta\times\regdoubledelta}  \int_{1/2}^{1} \left|\partial_\pl T_\pe (x+t(y-x))\cdot(y_\pl-x_\pl) \right|\,   \varphi(x,y)dt  \,dy\,dx \leq C\, \ell^2\, \int_{\regdoubledelta}\frac{ \left|\partial_\pl T_\pe (z)  \right| }{(|z_\pe|+\gamma)} \,dz.
\end{equation}
Next, we note that the convexity of $\psi$ implies that the matrix
\[
\left[\begin{matrix}
&\partial_\pl T_\pl &\partial_\pe T_\pl\\
&\partial_\pl T_\pe &\partial_\pe T_\pe\\
  \end{matrix}\ \right]
\]
is symmetric and non-negative with a non-negative determinant:
$\partial_\pl T_\pl (z) \partial_\pe T_\pe(z) - \partial_\pl T_\pe \partial_\pe T_\pl \geq 0$, which implies that $|\partial_\pl T_\pe(z)| \leq |\partial_\pl T_\pl(z)|^{1/2}\,|\partial_\pe T_\pe(z)|^{1/2}$.
This lets us deduce that
		\begin{align}
 \int_{\regdoubledelta} \frac{ |\partial_\pl T_\pe(z)|}{|z_d|+\gamma} dz &  \leq  \int_{\regdoubledelta}  \frac{|\partial_\pl T_\pl |^{1/2}}{(|z_\pe|+\gamma)} |\partial_\pe T_\pe(z)|^{1/2} \, dz\nonumber  \\
		&  \leq  \left[\int_{\regdoubledelta } \frac{|\partial_\pl T_\pl(z)|}{(|z_\pe|+\gamma)^2} dz \right]^{1/2} \left[\int_{\regdoubledelta }  |\partial_\pe T_\pe(z)|\, dz\right]^{1/2}\label{nabla'Td} .
				\end{align}
 Using the fact that $\partial_\pl T_\pl \geq 0$ from the convexity of $\psi$,
 \begin{align}
 \int_{\regdoubledelta} \frac{| \partial_\pl T_\pl (z) |}{(z_\pe + \gamma)^2} d z_\pe & = \int_{0}^{ \delta} \frac{T_\pl (\ell/2, z_\pe) - T_\pl (-\ell/2, z_\pe)}{(z_\pe + \gamma)^2} d z_\pe \leq \frac{C}{\ell} \int_{0}^{\delta} \frac{(K z_\pe + \oeps)}{(z_\pe + \gamma)^2} d z_\pe \\
 & \leq \frac{C K}{\ell} \int_{0}^{\delta} \frac{1}{(z_\pe +\gamma)} d z_\pe = C K \ell^{-1}  \int_{0}^{\delta} \frac{1}{z_\pe + \gamma} dz_\pe,
 \label{partialjTj}
 \end{align}
 by using Lemma \ref{lemma:SubdifferentialBound} and the fact that $\gamma\geq \oeps/K$.
 
                Similarly, we have that $\partial_\pe T_\pe \geq 0$ so that
                \begin{equation}
\int_{\regdoubledelta} |\partial_\pe T_\pe (z)| dz \leq \int_{-\ell/2}^{\ell / 2} [T_\pe (z_\pl, \delta)- T_\pe (z_\pl, 0)] d z_\pl \leq 2 K \ell.
\label{partialdTd}
\end{equation}
Combining \eqref{partialjTj} and \eqref{partialdTd} into \eqref{nabla'Td} and inserting the result into \eqref{eq:term11}, we conclude that
\begin{equation}\label{eq:term111}
\int_{\regdelta\times\regdoubledelta}  \int_{1/2}^{1} \left|\partial_\pl T_\pe (x+t(y-x))(y_1-x_1) \right|   \varphi(x,y)dt  \,dy\,dx \leq C\,  \ell^{2}\, \opsi \,\left[ \int_{0}^{2 \delta} \frac{1}{z_\pe + \gamma} d z_\pe \right]^{1/2}. 
\end{equation}
which gives a bound for the first term in the right hand side of \eqref{eq:TT}.

\medskip

We now proceed similarly to bound the second term in the right-hand side of \eqref{eq:TT}. First we write, recalling that $ \partial_\pe T_\pe\geq 0$,
\begin{align*}
&  \int_{\regdelta\times\regdoubledelta} \int_{1/2}^{1} \left|  \partial_\pe T_\pe(x+t(y-x))\,(y_\pe-x_\pe) \right|\,dt\,   \varphi(x,y)\,dy\,dx\\
&\qquad\qquad   \leq   \int_{\regdelta}  \int_{1/2}^{1}   \int_{\regdelta} \partial_\pe T_\pe (x+t(y-x))\, |y_\pe-x_\pe|\,      \varphi(x,y)\,dy\, dt\,dx. \nonumber
\end{align*}
Note that the definition of $\varphi$ in \eqref{eq:weight} implies that
\begin{align*} 
|y_\pe-x_\pe|\,  \varphi(x,y) & =\frac{ |y_\pe -x_\pe| }{(x_\pe+\gamma)^2} \mathbf{1}_{\{\frac{1}{2} x_\pe\leq y_\pe \leq 2 x_\pe \}} \\
& \leq \frac{1}{x_\pe +\gamma } \mathbf{1}_{\{\frac{1}{2} x_\pe \leq y_\pe \leq 2 x_\pe \}}.
\end{align*} 
We perform the same change of variable $z=x+t(y-x)$ as we used after \eqref{eq:term1}) to find that
\begin{align}
&  \int_{\regdelta\times\regdoubledelta} \int_{1/2}^{1} \left|  \partial_\pe T_\pe (x+t(y-x))\,(y_\pe -x_\pe ) \right|\, dt\,   \varphi(x,y)\,dy\,dx\nonumber \\
&\qquad\qquad   \leq  \int_{\regdelta}  \int_{1/2}^{1}   \int_{\regdoubledelta} \partial_\pe T_\pe (z)\,   \frac{1}{x_\pe +\gamma }\,   \mathbf{1}_{x+\frac{z-x}{t}\in \Omega_{x_\pe}}    \,dz\, dt\,dx \nonumber \\
& \qquad\qquad  \leq   \int_{\regdoubledelta} \partial_\pe T_\pe (z)\,   J_2(z) \,dz,\label{eq:term2}
\end{align}
with 
\[
J_2(z) = 2\,\int_{1/2}^{1}  \int_{\regdelta}   \frac{1}{x_\pe +\gamma }   \mathbf{1}_{x+\frac{z-x}{t}\in \Omega_{x_\pe }}    \, dx\, dt.
\]
Proceeding as with the weight $J_1(z)$ above (the only difference lies in the power of $(x_\pe +\gamma)$), we find that $\mathbf{1}_{x+\frac{z-x}{t}\in \Omega_{x_\pe}}\leq\mathbf{1}_{x\in \Omega_{z_\pe}}$
\[
J_2(z)\leq \frac{C}{z_\pe +\gamma}\,|\Omega_{z_\pe}|\leq C\, \ell.
\]
Inserting this bound in \eqref{eq:term2}, we obtain
\begin{align}
&   \int_{\regdelta\times\regdoubledelta} \int_{1/2}^{1} \left|  \partial_\pe T_\pe(x+t(y-x))\,(y_\pe-x_\pe) \right|\,dt \,  \varphi(x,y)\,dy\,dx\nonumber \\
& \qquad  \leq C\, \ell
  \int_{\regdoubledelta} \partial_\pe T_\pe (z)\,dz
= C\, \ell
 \int_{-\ell / 2}^{\ell / 2} (T_\pe(z_\pl,\delta)-T_\pe(z_\pl,0))\,  dz_\pl \label{eq:secondtermreg}
\leq C\, \opsi\,  \ell^{2}.
 \end{align}
Combining \eqref{eq:secondtermreg} and \eqref{eq:term111} in \eqref{eq:TT}, we obtain that
\begin{equation}
\int_{R_{\delta}\times R_{2 \delta}} |T_\pe(x)-T_\pe(y)|\,\varphi(x,y)\,dy\,dx\leq C\, \opsi\, \ell^{2}\, \left( \left[\log\left(1+\frac{\delta}{\gamma}\right)\right]^{1/2} + 1 \right), \label{conclusionprop1}
  \end{equation}
which proves the proposition if $T$ is $C^1$ and hence $\psi$ is $C^2$.
\medskip

When $\psi$ is only convex but not $C^2$, we naturally introduce the convex function $\psi^\eta= \psi \star_x \rho_{\eta}$, where $\rho_{\eta}$ is a standard mollifier. We may then apply \eqref{conclusionprop1} to $\psi^\eta$ and find for $T^\eta=\nabla\psi^\eta$
\[
\int_{R_{2 \delta}\times R_{2 \delta}} |T_\pe^\eta(x)-T_\pe^\eta(y)|\,\varphi(x,y)\,dy\,dx\leq C\, \opsi_\eta\, \ell^{2}\, \left( \left[\log\left(1+\frac{\delta}{\gamma}\right)\right]^{1/2} + 1\right),
\]
where we observe that, in this case, since we only integrate over $R_\delta$, $\opsi_\eta$ is given by
\[
\opsi_\eta=\sup_{R_\delta} |\nabla\psi_\eta|\leq \|\partial \psi\|_{L^\infty(\regdoubledelta)}\leq\opsi,
\]
for $\eta<\delta$. At the same time, since $\psi$ is convex then $T=\nabla\psi$ belongs to $BV(\regdoubledelta)$ and therefore $\|T^\eta-T\|_{L^1(\regdoubledelta)}\to 0$ as $\eta\to 0$. Since $\varphi$ is bounded for any fixed $\gamma>0$, we may directly pass to the limit $\eta\to 0$ and obtain \eqref{conclusionprop1} on $T$. 
\end{proof}
\subsection{Lower bound: Proof of Proposition \ref{prop:LowerBound}\label{sec:lower}}
We now turn to the proof of the lower bound \eqref{eq:LowerBound}.   Given $x_\pe\in (0, \delta)$, we recall for convenience the definition of the set $\Omega_{x_\pe}$, the following sets
\[
\Omega_{x_\pe} =\left\{ y \in [-\ell / 2, \ell / 2] \times [0, \delta]\, ;\; \  \frac{1}{2}\, x_\pe \leq y_\pe \leq 2\, x_\pe   \right\},
\]
together with the more restricted set 
\[
\Lambda_{x_\pe} =\left\{ y \in [-\ell / 4, \ell / 4] \times [0, \delta]\, ;\; \   x_\pe \leq y_\pe \leq \frac32\, x_\pe   \right\}.
\]
Since we are trying to show that $T_\pe(y)$ cannot be concentrated, instead of looking at $|T_\pe(y)-T_\pe(x)|$, we define, for $\xi \in \RR$ and $\eta>0$, the more general set
\begin{equation}
\label{eq:subdifftoolarge}
\Omega_{x_\pe,\eta}=\left\{ y \in \Omega_{x_\pe} \,;\, |z_\pe-\xi| > \eta \mbox{ for all } z \in \partial \psi(y) \right\} .
\end{equation}
Our first task, in Lemma \ref{lemma:etaExistence} below, is to show that for an appropriate value of $\eta$ and
for all $\xi\in \RR$, the set $\Omega_{x_\pe,\eta}$ is non empty, and more precisely $\Lambda_{x_\pe}\cap \Omega_{x_\pe,\eta}\neq\emptyset$. This will allow us to construct a half-cone within $\Omega_{x_\pe,\eta}$ in Lemma \ref{lemma:subdiffangle} and finally to obtain a lower bound for $|\Omega_{x_\pe,\eta}|$ in  Lemma \ref{lemma:SetLowerBound}.
This will finally let us  conclude the proof of Prop. \ref{prop:LowerBound} and obtain the lower bound \eqref{eq:LowerBound}.
%
%
\subsubsection{Non-emptyness of the  set \texorpdfstring{$\Omega_{x_\pe,\eta}$}{}}
First we have the following lemma, which implies in particular that the set $\Omega_{x_\pe,\eta}(\xi)$ is not empty.
\begin{lemma}
	\label{lemma:etaExistence}
	Let $\psi$ be a convex function satisfying \eqref{eq:ineq1} for some measure $\mu$ and $\nu$ satisfying Assumption~\ref{ass:measures} and let $K$ satisfy \eqref{eq:defK}. There exists a universal constant $C$  such that defining
	\begin{equation}\label{eq:etadef}
	  \eta := \frac{1}{C\, \lambda_1\, \lambda_2} \frac{   \ell^{2}}{\opsi },
	\end{equation}
	and assuming furthermore that  $\ell$ satisfies
	\[
        \ell \geq 2\, h_1,\quad  \ell^{2} \geq C\, \opsi\,   \lambda_1\, \lambda_2\, h_2,
        \]
 then for all $x_\pe > \gamma=\max ( \frac{\oeps }{\opsi},\,2h_1,\, \frac{\ell h_2}{C\,\opsi })$
and for all $\xi\in \RR$, there is at least one point $y^* \in \Lambda_{x_\pe}$ such that for some $z \in \partial \psi(y^*)$ we have $|z_\pe-\xi| \geq 3 \eta$.
\end{lemma}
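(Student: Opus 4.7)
The plan is to argue by contradiction: assume the conclusion fails, so that for every $y \in \Lambda_{x_\pe}$ and every $z \in \partial \psi(y)$ one has $|z_\pe - \xi| < 3\eta$. Then $\partial \psi(\Lambda_{x_\pe}) \subset \RR \times (\xi - 3\eta, \xi + 3\eta)$, and I will apply the fundamental measure inequality \eqref{eq:ineq1} to $A = \Lambda_{x_\pe}$ in order to derive an inconsistency between a lower bound on $\mu(\Lambda_{x_\pe})$ and an upper bound on $\nu(\partial \psi(\Lambda_{x_\pe}))$.

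For the lower bound, I observe that $\Lambda_{x_\pe}$ is a rectangle of side lengths $\ell/2$ and $x_\pe/2$. The first is $\geq h_1$ by the hypothesis $\ell \geq 2h_1$, and the second is $\geq h_1$ since $x_\pe > \gamma \geq 2h_1$. Thus Assumption~\ref{ass:measures} yields
\[
\mu(\Lambda_{x_\pe}) \;\geq\; \frac{|\Lambda_{x_\pe}|}{\lambda_1} \;=\; \frac{\ell\, x_\pe}{4\lambda_1}.
\]

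For the upper bound on $\nu(\partial \psi(\Lambda_{x_\pe}))$, I first control the horizontal extent of the subdifferential image. By Lemma~\ref{lemma:SubdifferentialBound}, for any $y \in \Lambda_{x_\pe}$ and any $z \in \partial \psi(y)$ one has $|z_\pl| \leq (2/\ell)(\opsi\, y_\pe + \oeps) \leq C\,\opsi\, x_\pe/\ell$, using $y_\pe \leq (3/2)\,x_\pe$ together with $\oeps \leq \opsi\, \gamma \leq \opsi\, x_\pe$ (which follows from $\gamma \geq \oeps/\opsi$ and $x_\pe > \gamma$). Combined with the assumed strip constraint on $z_\pe$, this places $\partial \psi(\Lambda_{x_\pe})$ inside a rectangle $R'$ whose horizontal side is bounded by $C\,\opsi\, x_\pe/\ell$ and whose vertical side is bounded by $6\eta$. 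To apply the upper bound in Assumption~\ref{ass:measures} one needs both sides of $R'$ to be $\geq h_2$: the clause $\gamma \geq \ell h_2/(C\,\opsi)$ in the definition of $\gamma$ ensures (for a suitably chosen universal constant) that the horizontal side is already $\gtrsim h_2$, while if $6\eta < h_2$ one simply enlarges $R'$ vertically to height $h_2$.

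Combining the two estimates in $\mu(\Lambda_{x_\pe}) \leq \nu(\partial\psi(\Lambda_{x_\pe}))$ then yields a dichotomy. When $6\eta \geq h_2$ the inequality reads
\[
\frac{\ell\, x_\pe}{4\lambda_1} \;\leq\; C\, \lambda_2\, \frac{\opsi\, x_\pe}{\ell}\, \eta,
\]
which forces $\eta \gtrsim \ell^2/(\lambda_1 \lambda_2\, \opsi)$ and contradicts the definition \eqref{eq:etadef} provided the universal constant there is chosen large enough. When $6\eta < h_2$ it reads
\[
\frac{\ell\, x_\pe}{4\lambda_1} \;\leq\; C\, \lambda_2\, \frac{\opsi\, x_\pe}{\ell}\, h_2,
\]
which forces $\ell^2 \lesssim \lambda_1 \lambda_2\, \opsi\, h_2$ and contradicts the standing hypothesis $\ell^2 \geq C\,\opsi\, \lambda_1 \lambda_2\, h_2$. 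The main obstacle I expect is bookkeeping rather than substance: the three universal constants appearing in the definition of $\gamma$, in \eqref{eq:etadef}, and in the lower bound on $\ell^2$ must be chosen large enough in a consistent order so that the case analysis above closes without circularity; in particular, the constant in the $\ell h_2/(C\opsi)$ term of $\gamma$ must be fixed first in order to control the horizontal side of $R'$, and the constant in $\eta$ must then dominate whatever absolute constant the case $6\eta \geq h_2$ produces.
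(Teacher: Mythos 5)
Your proposal is correct and takes essentially the same route as the paper: argue by contradiction, lower-bound $\mu(\Lambda_{x_\pe})$ and upper-bound $\nu(\partial\psi(\Lambda_{x_\pe}))$ via Assumption~\ref{ass:measures}, use Lemma~\ref{lemma:SubdifferentialBound} to control the horizontal extent, and enlarge the target rectangle to height $h_2$ when $\eta$ is small. The paper packages your two-case analysis into a single rectangle $\tilde R$ defined with $\max(3\eta,h_2)$ and $\max(\cdot,h_2)$ in each dimension, but the content is identical, as is your observation about the order in which the universal constants must be fixed.
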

The idea of the proof is to look at the image of the set $\Lambda_{x_\pe}$ by the subdifferential $\pa\psi$.
By Lemma~\ref{lemma:SubdifferentialBound} this image 
is bounded in the horizontal (i.e. $z_\pl$) directions.
However, \eqref{eq:ineq1} together with Assumption \ref{ass:measures} gives a lower bound on the measure of this image, which is where the fact that $\psi$ is the Kantorovich potential for an optimal transportation problem is crucial. 
Therefore the image cannot be too small in the vertical (i.e. $z_\pe$) directions, which is essentially the statement of Lemma \ref{lemma:etaExistence}.
The lower bounds on $\ell$ and $x_\pe$ in Lemma \ref{lemma:etaExistence} are necessary so that we can use  \eqref{eq:MeasuresConditions} on the measures $\mu$ and $\nu$.

\begin{proof}[Proof of Lemma \ref{lemma:etaExistence}]
We start by noticing that for all $z\in \partial \psi (\Lambda_{x_\pe})$, Lemma~\ref{lemma:SubdifferentialBound} implies that
\[
|z_\pl| \leq \frac{2}{\ell}\, \left(\opsi\, \frac{3}{2}\,x_\pe +\oeps \right) .
\]
This leads to defining the rectangle
\[
	\tilde R= \left\{z \in \RR^2\, ;\, |z_\pl| \leq  \max\left( \frac{2\,\opsi  }{\ell}\, \left(\frac{3}{2}\,x_\pe+\frac{\oeps}{\opsi } \right),h_2\right) \mbox{ and } |z_\pe-\xi| \leq \max(3\, \eta, h_2)  \right\}.
\]
	We show the existence of $y^*$ as in  Lemma \ref{lemma:etaExistence} by contradiction: Suppose there is no such point $y^*$, then we must have that $\partial \psi (\Lambda_{x_\pe}) \subset \tilde R$ and inequality \eqref{eq:ineq1} gives
	\begin{align}
	\mu(\Lambda_{x_\pe}) \leq \nu(\pa\psi(\Lambda_{x_\pe}))  \leq \nu(\tilde R\cap\pa\psi(\Omega) ).\label{eq:transportInequality}
	\end{align}
We now want to use Assumption \ref{ass:measures} to estimate the left and right hand side of \eqref{eq:transportInequality}.
The rectangle $\Lambda_{x_\pe}$ has size $\left(\frac \ell 2\right)\times\frac {x_\pe} 2$.
Since $\ell\geq 2\,h_1$ and $x_\pe \geq \gamma \geq 2h_1$, the rectangle $\Lambda_{x_\pe}$ has size at least $h_1$ in all directions and Assumption \ref{ass:measures} (see \eqref{eq:MeasuresConditions}) implies that $\mu(\Lambda_{x_\pe})\geq |\Lambda _{x_\pe} | /\lambda_1$.

Similarly, the definition of the set $\tilde R$ guarantees that $\tilde R$ has size at least $h_2$ in all directions and so
$\nu(\tilde R\cap\pa\psi(\Omega)) \leq \lambda_2 |\tilde R|$. Equation~(\ref{eq:transportInequality}) thus yields
\begin{equation}\label{eq:LambdaB}
|\Lambda_{x_\pe}| \leq \lambda_1\, \lambda_2\, |\tilde R|.
\end{equation}  
We now note that
\begin{equation}
|\Lambda_{x_\pe}| =C\,\ell \, x_\pe ,\label{LambdaB}
\end{equation}
while the assumption $x_\pe > \gamma$ with $\gamma\geq \frac{\oeps}{\opsi }$ and $\gamma\geq \ell\,h_2/C\,K$ implies
\begin{equation}
  \begin{split}
  |\tilde R| & =  C \,\max\left( \frac{2\opsi  }{\ell} \left(\frac{3}{2}\, x_\pe +\frac{\oeps}{\opsi } \right),\;h_2\right)\, \left(\max (3\, \eta, h_2)\right)\\
  &\leq  C\, \left(\frac{ \opsi }{\ell}\, x_\pe \right)\, \left(\max (3\, \eta, h_2)\right).
  \end{split}
  \label{intermed|B|}
\end{equation}

Together with the definition of $\eta$ this shows that
\begin{equation}
|\tilde R| \leq C \left(\frac{\opsi}{\ell} x_\pe\right) \max \left(\frac{1}{C \lambda_1 \lambda_2} \frac{\ell^2}{K}, h_2\right) 
\end{equation}

Equation (\ref{eq:LambdaB}) then proves that
\begin{equation}
C \ell x_\pe \leq \frac{\ell x_\pe}{C}
\end{equation} 
which is a contraction by taking $C$ large enough and concludes the second part of the proof.
\end{proof}
%
\subsubsection{Lower Bound on \texorpdfstring{$|\Omega_{x_\pe,\eta}|$}{}}
%
We now show that the measure $|\Omega_{x_\pe,\eta}|$ is bounded from below.
 We first need, as an intermediary result, the following lemma which only relies on the convexity of the function $\psi$. This lemma mostly states that if the subdifferentials corresponding to  two points $y'$ and $y''$ is concentrated in the vertical direction and the segment $[y',\;y'']$ is almost vertical, then the subdifferential corresponding to any point in that segment also has to be concentrated.
 
 We will later use this lemma together with Lemma \ref{lemma:etaExistence} to obtain contradictions and ensures the absence of concentration in the subdifferential over half a cone. 
 \begin{lemma}	\label{lemma:subdiffangle}
 	Let $\psi$ satisfy \eqref{eq:convexfunc}, consider any $x_\pe\geq \gamma=\max\left(\frac{\oeps}{\opsi},\,2\,h_1,\, \frac{\ell\,h_2}{C\,K}\right)$ and fix any $\xi \in \mathbb{R}$. 
 	Assume that $y',\;y'' \in \Omega_{x_\pe}$ are such that  $y'\neq\ y''$ and
 	\[
 	\partial \psi (y') \cap \{ z \in \Omega_2\, ;\,  |z_\pe-\xi| \leq \eta \} \neq \emptyset,\qquad \partial \psi (y'') \cap \{ z  \in \Omega_2\, ;\,  |z_\pe-\xi| \leq \eta \} \neq \emptyset.
 	\]
 	There exists a universal constant $C$ s.t., if 
 	\[
 	|\tan ((y',\;y''),\;e_\pe)| \leq \frac{\ell\, \eta}{C\, \opsi  x_\pe},
 	\]
 	where $(y',\,y''),\;e_\pe)$ is the angle between the vertical direction $e_\pe$ and the segment $[y',\,y'']$, 
 	then for all $y=s\,y' +(1-s)\, y''$ with $s \in (0,1)$ we have 
 	\[
 	\partial \psi (y) \subset \{ z \,;\,  |z_\pe-\xi| \leq 2\, \eta \}.
 	\]
 \end{lemma}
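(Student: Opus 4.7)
\textbf{Proof plan for Lemma \ref{lemma:subdiffangle}.} The natural tool here is monotonicity of the subdifferential of a convex function. Set $v = y'' - y'$ and write $y = s y' + (1-s) y''$ so that $y - y' = (1-s) v$ and $y - y'' = -s v$. For any $z \in \partial \psi(y)$, $z' \in \partial \psi(y')$, $z'' \in \partial \psi(y'')$, cyclic monotonicity gives
\[
(z - z') \cdot (y - y') \geq 0, \qquad (z - z'') \cdot (y - y'') \geq 0,
\]
which, after dividing by $(1-s)$ and $(-s)$ respectively, collapses to the sandwich
\[
z' \cdot v \;\leq\; z \cdot v \;\leq\; z'' \cdot v.
\]

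Next I would choose $z' \in \partial\psi(y') \cap \{|z_\pe - \xi| \leq \eta\}$ and similarly $z''$, which exist by hypothesis, and then project the sandwich onto the $e_\pe$ direction. The tangent assumption forces $v_\pe \neq 0$, so we may assume (after swapping $y'$ and $y''$ if necessary) that $v_\pe > 0$. Expanding $z \cdot v = z_\pl v_\pl + z_\pe v_\pe$ and dividing by $v_\pe$ yields the two inequalities
\[
z'_\pe + \frac{v_\pl}{v_\pe}(z'_\pl - z_\pl) \;\leq\; z_\pe \;\leq\; z''_\pe + \frac{v_\pl}{v_\pe}(z''_\pl - z_\pl).
\]
Since $|z'_\pe - \xi| \leq \eta$ and $|z''_\pe - \xi| \leq \eta$, it remains to show that the perturbative cross-terms are bounded by $\eta$.

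For this, I would invoke Lemma \ref{lemma:SubdifferentialBound}: since $y,y',y'' \in \Omega_{x_\pe}$ they all have $|y_\pl|\leq \ell/2$ and vertical coordinate at most $2x_\pe$, so every horizontal subgradient component satisfies
\[
|z_\pl|,|z'_\pl|,|z''_\pl| \;\leq\; \frac{2}{\ell}\bigl( 2 K x_\pe + \oeps\bigr) \;\leq\; \frac{6 K x_\pe}{\ell},
\]
where in the last step I used that $x_\pe \geq \gamma \geq \oeps / K$. Hence $|z'_\pl - z_\pl|$ and $|z''_\pl - z_\pl|$ are both bounded by $12 K x_\pe / \ell$. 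Combined with the tangent hypothesis $|v_\pl/v_\pe| \leq \ell \eta / (C K x_\pe)$ this gives
\[
\left| \frac{v_\pl}{v_\pe}(z'_\pl - z_\pl)\right| \;\leq\; \frac{12\,\eta}{C} \;\leq\; \eta
\]
provided the universal constant $C$ in the statement is taken at least $12$, and the same bound holds for the $z''$-term. Plugging back yields $\xi - 2\eta \leq z_\pe \leq \xi + 2\eta$, i.e. $|z_\pe - \xi| \leq 2\eta$, which is the desired conclusion.

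The plan is essentially routine: the only mildly tricky point is the bookkeeping of signs (dealing with $v_\pe$ possibly negative, and handling the case $v_\pe = 0$ which is ruled out by the tangent assumption) and tracking the $\oeps$ tail coming from Lemma \ref{lemma:SubdifferentialBound}. Both are absorbed into the constant $C$ via the uniform estimates $\oeps/K \leq \gamma \leq x_\pe$. No further use of \eqref{eq:ineq1} or of Assumption \ref{ass:measures} is needed at this step; the lemma is purely a consequence of convexity.
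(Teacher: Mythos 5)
Your proof is correct and takes essentially the same route as the paper: monotonicity of the subdifferential applied to the pairs $(y,y')$ and $(y,y'')$, followed by the horizontal bound from Lemma \ref{lemma:SubdifferentialBound} and the tangent hypothesis to control the cross-terms. The only cosmetic difference is that you package the two monotonicity inequalities as a single sandwich $z'\cdot v\leq z\cdot v\leq z''\cdot v$ before projecting onto $e_\pe$, whereas the paper runs the argument once for $y'$ (upper bound on $z_\pe$) and once for $y''$ (lower bound); the underlying estimates and the way $\oeps$ and $\gamma$ are absorbed are identical.
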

 
 \begin{proof}
 	Take $z' \in \partial \psi(y') \cap \{ |z_\pe-\xi| \leq \eta \}$ and $y=s\, y' +(1-s)\, y''$ for some fixed $s \in (0,1)$. We can assume (without loss of generality) that $y'_\pe-y_\pe >0$ and $y''_\pe-y_\pe<0$.
 	For any  $z\in \partial \psi (y)$, the convexity of $\psi$ implies (cyclical monotonicity of the sub-differential):
 	\[
 	(z'-z)\cdot  (y'-y) \geq 0, 
 	\]
 	and therefore
 	\[
 	(z'_\pe-z_\pe)\,(y'_\pe-y_\pe) \geq -(z'_\pl-z_\pl)(y'_\pl-y_\pl) \mbox{.}
 	\]
 	We hence deduce that
 	\[
 	z_\pe \leq z'_\pe +\frac{(z'_\pl-z_\pl)(y'_\pl-y_\pl) }{y'_\pe-y_\pe} \leq \xi+\eta +(|z'_\pl|+|{z_\pl}|)\,\frac{|y'_\pl-{y}_\pl| }{y'_\pe-y_\pe}\mbox{,}
 	\]
 	since $|\overline{z}_\pe-\xi|\leq\eta$. 
 	Using now Lemma \ref{lemma:SubdifferentialBound}, we then get that
 	\begin{align*}
 	z_\pe & \leq \xi+\eta + \left[\frac{2}{\ell}\, \left(\opsi\, |y'_\pe|+\oeps \right)+\frac{2}{\ell}\,  \left(\opsi\,  |y_\pe|+\oeps \right)\right]\,	\frac{|y'-y|}{y'_\pe-y_\pe}\\
 	& \leq \xi+\eta+\frac{C}{\ell}\,\opsi\, x_\pe \, |\tan ((y'',y'),\,e_\pe)|\leq \xi +2\eta,
 	\end{align*}
 	by the definition of the tangent and where we used the fact that $x_\pe\geq \gamma\geq {\oeps}/\opsi$, that $y',\, y'' \in \Omega_{x_\pe}$ so $y\in \Omega_{x_\pe}$ and as a consequence $y'_\pe,\;y_\pe\leq 2\,x_\pe$.
 	
 	Proceeding similarly using $y''$ instead of $y'$, we can get the inequality $z_\pe \geq \xi - 2\, \eta$ and the result follows.
 \end{proof}
 
 Using Lemmas \ref{lemma:etaExistence} and \ref{lemma:subdiffangle}, we can now get a lower bound on the measure of the set  $\Omega_{x_\pe,\eta}(\xi)$ (which we recall is defined by \eqref{eq:subdifftoolarge}). This will be the key estimate  in the proof of Proposition \ref{prop:LowerBound}.
 \begin{lemma}
 	\label{lemma:SetLowerBound}
 	Let $\psi$ be a convex function satisfying \eqref{eq:ineq1} for some measure $\mu$ and $\nu$ satisfying Assumption~\ref{ass:measures}. Assume further that $\psi$ satisfies  \eqref{eq:convexfunc}. Recall that $K$ satisfies \eqref{eq:defK} and that $\eta$ is defined by \eqref{eq:etadef}.
 	Assume furthermore that  $\ell $ satisfies, for an appropriate universal constant $C$,
 	\[
 	\ell \geq 2 h_1,\quad  \ell^{2} \geq C\,\opsi\,   \lambda_1\, \lambda_2\, h_2.
 	\]
 	Then, for all $x_\pe > \gamma=\max \left( \frac{\oeps }{\opsi},\;2\,h_1,\; \frac{\ell h_2}{C\,\opsi }\right)$,
 	and for all $\xi\in \RR$, one has the lower bound
 	\begin{equation}
 	\label{eq:OmegaLowerBound}
 	|\Omega_{x_\pe, \eta}(\xi)|\geq  \frac{\ell\,x_\pe}{C}\,\min\left(1,\ \frac{\ell^{2}}{\lambda_1\,\lambda_2\,\opsi^{2}}\right).
 	\end{equation}
 \end{lemma}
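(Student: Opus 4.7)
The plan is to apply Lemma \ref{lemma:etaExistence} to produce a reference point $y^\ast \in \Lambda_{x_\pe}$ together with a subgradient $z^\ast \in \partial\psi(y^\ast)$ satisfying $|z^\ast_\pe - \xi| \geq 3\eta$, and then to exhibit a half-cone anchored at $y^\ast$ that sits inside $\Omega_{x_\pe, \eta}(\xi)$ and whose area already matches the right-hand side of \eqref{eq:OmegaLowerBound}. By the vertical symmetry of the problem, I will assume throughout that $z^\ast_\pe \geq \xi + 3\eta$; the opposite case $z^\ast_\pe \leq \xi - 3\eta$ is handled by the mirror argument with a downward half-cone.

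I would fix the half-opening $\alpha$ by $\tan \alpha := \min\{\ell/(4 x_\pe),\; \ell \eta/(C' K x_\pe)\}$ for a large enough universal constant $C'$, and define
\[
\mathcal{C} := \{y \in \RR^2 \,:\, y_\pe > y^\ast_\pe,\ |y_\pl - y^\ast_\pl| \leq (y_\pe - y^\ast_\pe)\tan\alpha\}.
\]
Since $y^\ast \in \Lambda_{x_\pe}$ forces $|y^\ast_\pl| \leq \ell/4$ and $y^\ast_\pe \leq 3x_\pe/2$, the first branch of the minimum is exactly what is needed to keep $\mathcal{C} \cap \{y_\pe \leq 2 x_\pe\}$ inside $\Omega_{x_\pe}$, while the second branch is tailored for the monotonicity estimate that follows.

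The central step is to show $\mathcal{C} \cap \Omega_{x_\pe} \subset \Omega_{x_\pe, \eta}(\xi)$. Arguing by contradiction, suppose some $y \in \mathcal{C} \cap \Omega_{x_\pe}$ admits $z \in \partial\psi(y)$ with $z_\pe \leq \xi + \eta$. Cyclical monotonicity of $\partial \psi$ gives $(z^\ast - z) \cdot (y^\ast - y) \geq 0$, which rearranges (using $y_\pe > y^\ast_\pe$) to
\[
(z^\ast_\pe - z_\pe)(y_\pe - y^\ast_\pe) \leq (z^\ast_\pl - z_\pl)(y^\ast_\pl - y_\pl).
\]
Lemma \ref{lemma:SubdifferentialBound}, combined with $\oeps \leq K\gamma \leq K x_\pe$, bounds $|z^\ast_\pl|, |z_\pl| \leq C K x_\pe / \ell$, while the cone geometry gives $|y^\ast_\pl - y_\pl| \leq (y_\pe - y^\ast_\pe)\tan\alpha$. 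Dividing through by $y_\pe - y^\ast_\pe > 0$ and then invoking the second branch of the minimum defining $\tan \alpha$, one arrives at $z^\ast_\pe - z_\pe \leq C K x_\pe \tan \alpha /\ell \leq \eta$, in contradiction with $z^\ast_\pe - z_\pe \geq 3\eta - \eta = 2\eta$.

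To finish, the truncated set $\mathcal{C} \cap \{y_\pe \leq 2 x_\pe\}$ is a triangle with apex $y^\ast$, height $h := 2 x_\pe - y^\ast_\pe \in [x_\pe/2, x_\pe]$, and area $h^2 \tan \alpha \geq (x_\pe^2/4)\tan \alpha$; substituting the two branches of $\tan \alpha$ together with $\eta = \ell^2/(C\, \lambda_1 \lambda_2 K)$ from \eqref{eq:etadef} reproduces, respectively, each of the two terms inside the minimum in \eqref{eq:OmegaLowerBound}. In my view the main difficulty is the calibration of $\tan \alpha$: it must be small enough to close the monotonicity estimate, large enough that the half-cone retains macroscopic area, and consistent with $\mathcal{C} \subset \Omega_{x_\pe}$. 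Keeping track of which branch of the minimum is active corresponds precisely to the dichotomy between $\ell^2 \leq \lambda_1\lambda_2 K^2$ and $\ell^2 \geq \lambda_1\lambda_2 K^2$, and this is what produces the $\min\{1,\, \ell^2/(\lambda_1 \lambda_2 K^2)\}$ factor in \eqref{eq:OmegaLowerBound}.
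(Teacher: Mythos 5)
Your proposal is correct, and it reaches the same lower bound as the paper, but the way you pass from Lemma \ref{lemma:etaExistence} to the measure estimate is genuinely different and, I think, a bit cleaner. The paper works with a \emph{two-sided} cone $S_\theta$ with vertex $\tilde y$ and proves a weaker-looking statement: for every chord of $S_\theta$ passing through $\tilde y$, at least one of the two half-chords lies in $\Omega_{x_\pe,\eta}$. This is obtained by appealing to the separate Lemma \ref{lemma:subdiffangle}, which says that if the endpoints $y',y''$ of a nearly vertical segment both admit subgradients with $|z_\pe-\xi|\leq\eta$, then every interior point — in particular $\tilde y$ — would also have to, contradicting $|\tilde z_\pe-\xi|\geq 3\eta$. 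One then concludes $|\Omega_{x_\pe,\eta}\cap S_\theta|\geq |S_\theta|/2$ by integrating over chords. You instead pre-select the sign of $z^*_\pe-\xi$, build only the \emph{half}-cone $\mathcal C$ opening in the corresponding vertical direction, and prove the stronger inclusion $\mathcal C\cap\Omega_{x_\pe}\subset\Omega_{x_\pe,\eta}(\xi)$ directly by one application of cyclical monotonicity between $y^*$ and a generic $y\in\mathcal C$. This folds the content of Lemma \ref{lemma:subdiffangle} inline (a single-comparison version rather than an intermediate-point version), dispenses with the ``at least one half of each chord'' bookkeeping, and trades the paper's factor $|S_\theta|/2$ for the full area of the half-cone — both of which give the same bound up to the universal constant. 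Your calibration of $\tan\alpha$, the containment $\mathcal C\cap\{y_\pe\leq 2x_\pe\}\subset\Omega_{x_\pe}$ via $|y^*_\pl|\leq\ell/4$, $y^*_\pe\in[x_\pe,3x_\pe/2]$, and the area computation $h^2\tan\alpha\geq(x_\pe^2/4)\tan\alpha$ all check out and reproduce the $\min\bigl(1,\ell^2/(\lambda_1\lambda_2 K^2)\bigr)$ factor correctly after substituting $\eta$ from \eqref{eq:etadef}.

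One small point worth making explicit in a write-up: in the contradiction step you only need the inequality $z_\pe\leq\xi+\eta$, which is a consequence of $|z_\pe-\xi|\leq\eta$; what you actually conclude is the stronger statement $z_\pe>\xi+\eta$ for every $z\in\partial\psi(y)$ with $y\in\mathcal C\cap\Omega_{x_\pe}$, which of course implies $|z_\pe-\xi|>\eta$ and hence membership in $\Omega_{x_\pe,\eta}(\xi)$. You should also confirm that the mirror (downward) half-cone has the same available height, which it does since $y^*_\pe-x_\pe/2\geq x_\pe/2$.
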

 
 \begin{proof}
 	Start by using Lemma \ref{lemma:etaExistence} to obtain the existence of one $\tilde y \in \Lambda_{x_\pe}$ be such that for some $\tilde z \in \partial \psi(y)$ we have $|\tilde z_\pe-\xi|\geq 3\, \eta$.
 	Define now $C_{\theta}$ as the cone (see figure \ref{fig:cones}) with vertex $\tilde y$ and  angle $\theta$ with the vertical direction $e_\pe$, such that
 	\[
 	\tan \theta=\min\left(\displaystyle \frac{\ell}{2 x_\pe},\,\displaystyle \frac{\ell\, \eta}{C\, \opsi  x_\pe}\right).
 	\]
 	Define furthermore the truncated cone $S_\theta=\{y\in C_\theta\,|\; |y_\pe-\tilde y_\pe|\leq x_\pe/2\}$.
 	
 	 	\begin{figure}[h]
 	 		\includegraphics[width=1\textwidth]{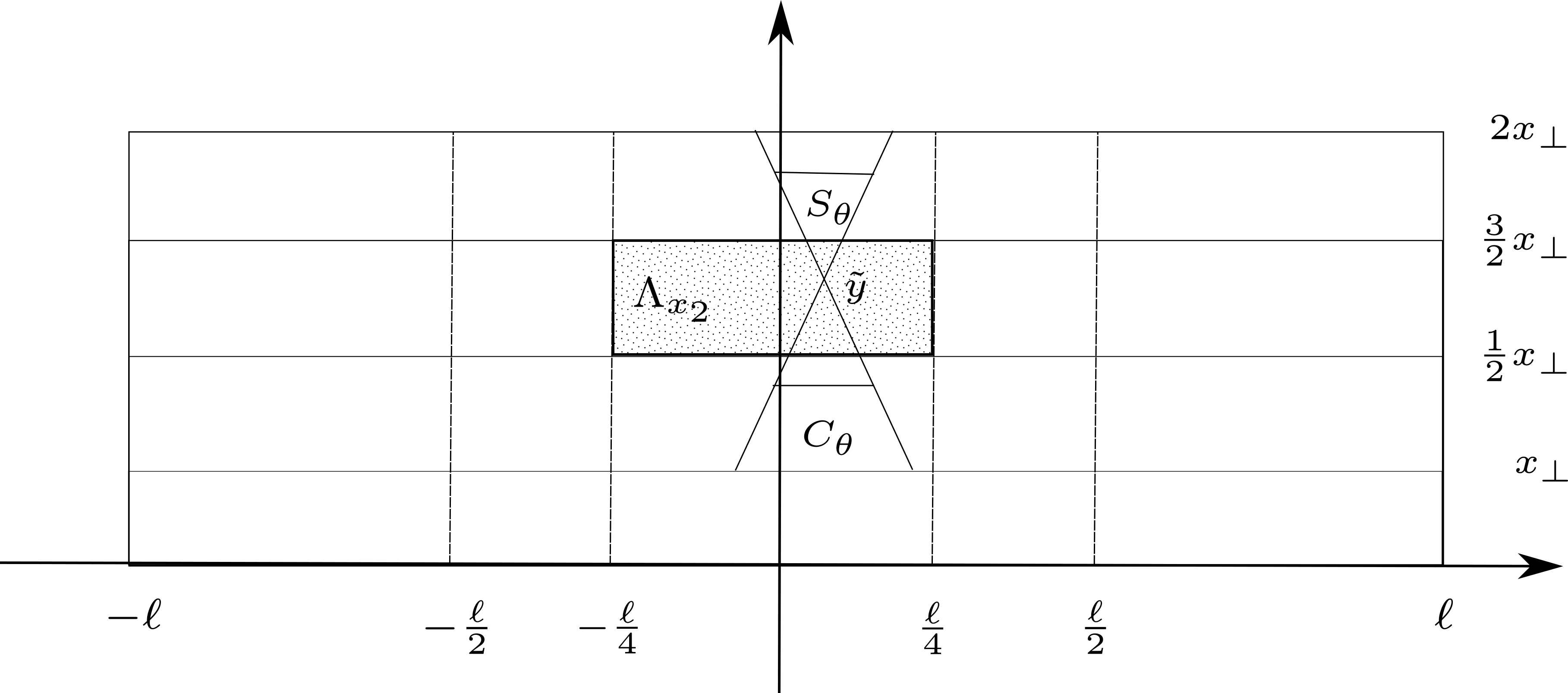}
 	 		\caption{Cones $C_{\theta}$ and $S_{\theta}$}
 	 		\label{fig:cones}
 	 	\end{figure}
 	
 	We first observe that $S_\theta\subset \Omega_{x_\pe}$ as for any $y\in S_\theta$, we have first $\tilde y_\pe-x_\pe/2\leq y_\pe\leq \tilde y_\pe+x_\pe/2$ and hence $\frac{x_\pe}{2}\leq y_\pe\leq 2\,x_\pe$ since $x_\pe\leq \tilde y_\pe\leq \frac{3}{2}\,x_\pe$. Second, since $|\tilde y_\pl|\leq \ell/4$, we have that
 	\[
 	|y_\pl|\leq |\tilde y_\pl|+|\tan \theta|\,|y_\pe-\tilde y_\pe|\leq \frac{\ell}{4}+|\tan \theta|\,\frac{x_\pe}{2}\leq \frac{\ell}{2},
 	\]
 	which is the reason for the condition $\tan \theta\leq \ell/(2\,x_\pe)$.
 	
 	The next step is to use Lemma \ref{lemma:subdiffangle} to prove that $|\Omega_{x_\pe,\eta}\cap S_{\theta}|\geq |S_\theta|/2$. For this consider any segment in the truncated cone $S_\theta$ with hence angle $\theta'\leq \theta$ with $e_\pe$. Denote by $L_{\theta'}^1$ and $L_{\theta'}^2$ the two half-parts of the segment from $\tilde y$.
 	
 	We can show that either $L_{\theta'}^1\subset \Omega_{x_\pe,\eta}$ or $L_{\theta'}^2\subset \Omega_{x_\pe,\eta}$. Indeed by contradiction, if this was not the case we would have some  $y' \in L^1_{\theta'}\setminus \Omega_{x_\pe,\eta}$, $y'' \in L_{\theta'}^2\setminus \Omega_{x_\pe,\eta}$. By the definition of $\Omega_{x_\pe,\eta}$, there exists $z' \in \partial \psi(y')$ with $|z'_\pe-\xi|\leq \eta$ and similarly $z''\in \partial \psi(y'')$ with $|z''_\pe-\xi|\leq \eta$.
 	
 	Of course by definition
 	\[  \tan(({y'},\,y''),\;e_\pe)=\tan\theta'\leq \tan\theta\leq \frac{\ell\,\eta}{C\,\opsi\,|x_\pe|},
 	\]
 	and we can directly apply Lemma \ref{lemma:subdiffangle}. As $\tilde y$ is a convex combination of $y',\,y''$, this implies that $\partial\psi(\tilde y)\subset\{z\,|\;|z_\pe-\xi|\leq 2\,\eta\}$ contradicting the fact that $\tilde z\in \partial\psi(\tilde y)$ but $|\tilde z_\pe-\xi|\geq 3\,\eta$.
 	
 	This proves as claimed that either $L_{\theta'}^1\subset \Omega_{x_\pe,\eta}$ or $L_{\theta'}^2\subset \Omega_{x_\pe,\eta}$ and integrating over all possible segments with all possible angles that $|\Omega_{x_\pe,\eta}\cap S_{\theta}|\geq |S_\theta|/2$.

 	To conclude the proof, it is hence enough to bound from below $|S_\theta|$,
 	\[
 	|S_\theta|=C \,x_\pe\,(x_\pe\,\tan\theta)\geq \frac{1}{C}\,x_\pe\,\ell\,\left(\min\left(1,\,\frac{\eta}{\opsi}\right)\right).
 	\]
 	Using the definition of $\eta$ in \eqref{eq:etadef}, we eventually obtain that
 	\[
 	|S_\theta|\geq \frac{\ell\,x_\pe}{C}\,\min\left(1,\ \frac{\ell^{2}}{\lambda_1\,\lambda_2\,\opsi^{2}\, }\right).
 	\]
 \end{proof}

\subsubsection{Proof of Proposition \ref{prop:LowerBound}}
We now have all the estimates required to prove Proposition \ref{prop:LowerBound}:

\begin{proof}[Proof of Proposition \ref{prop:LowerBound}]
Using the definition of $\varphi$ as given in \eqref{eq:weight}, and the set $\Omega_{x_\pe,\eta}(\xi)$ introduced in \eqref{eq:subdifftoolarge},  we get
			\begin{align*}
\int_{\regdelta\times\regdoubledelta} |T_\pe(y)-T_\pe(x)|\, \varphi (x,y)\, dy\, dx 
& = \int_{\regdelta} \frac{1}{(x_\pe+\gamma)^{2}}\, \int_{\Omega_{x_\pe}}|T_\pe(y)-T_\pe(x)| \, dy \, dx.\\
\end{align*}
Now fix $\xi=T_\pe(x)$ and calculate
\[
\int_{\Omega_{x_\pe}}|T_\pe(y)-T_\pe(x)|\,dy\geq \eta\,\int_{\Omega_{x_\pe,\eta}} dy=\eta\,|\Omega_{x_\pe,\eta}|.
\]
Observe that the assumptions on $\ell$ and the definition of $\gamma$ in Proposition \ref{prop:LowerBound} exactly coincide with Lemma~\ref{lemma:SetLowerBound}. Hence we may apply the lemma whenever $x_\pe>\gamma$ to find
\[
\begin{split}
  &\int_{\Omega_{x_\pe}}|T_\pe(y)-T_\pe(x)|\,dy\geq
\frac{\ell\,x_\pe\,\eta}{C}\,\min\left(1, \ \frac{\ell^{2}}{\lambda_1\,\lambda_2\,\opsi^{2}\, }\right)\\
&\qquad =\frac{\ell^{3}\,x_\pe}{C\,\lambda_1\,\lambda_2\,\opsi }\,\min\left(1,\ \frac{\ell^{2}}{\lambda_1\,\lambda_2\,\opsi^{2}}\right),\\
\end{split}
\]
by the definition of $\eta$.
This leads to
\begin{align*}
&\int_{\regdelta\times\regdoubledelta} |T_\pe(y)-T_\pe(x)|\, \varphi (x,y)\, dy\, dx 
\geq \int_{\regdelta\cap\{x_\pe \geq \gamma\}} |T_\pe(y)-T_\pe(x)|\, \varphi (x,y)\, dy\, dx\\
&\qquad\geq \frac{\ell^{3}}{C\,\lambda_1\,\lambda_2\,\opsi }\,\min\left(1, \ \frac{\ell^{2}}{\lambda_1\,\lambda_2\,\opsi^{2}\, }\right)\,\int_{\regdelta\cap\{ x_\pe \geq \gamma\}} \frac{dx_\pe}{(x_\pe+\gamma)}\\
&\qquad\geq \frac{\ell^{4}}{C\,\lambda_1\,\lambda_2\,\opsi }\,\min\left(1, \ \frac{\ell^{2}}{\lambda_1\,\lambda_2\,\opsi^{2}}\right)\,\int_\gamma^\delta \frac{dr}{r+\gamma},
\end{align*}
and we may conclude that
\[\begin{split}
&\int_{\regdelta\times\regdoubledelta} |T_\pe(y)-T_\pe(x)|\, \varphi (x,y)\, dy\, dx 
\\
&\quad\geq \frac{\ell^{4}}{C\,\lambda_1\,\lambda_2\,\opsi }\,\min\left(1,\ \frac{\ell^{2}}{\lambda_1\,\lambda_2\,\opsi^{2}}\right)\,\log \left(\frac{1}{2}+\frac{\delta}{2\,\gamma}\right),
\end{split}
\]
which completes the proof.
\end{proof}

\section{Proof of Corollary \ref{cor:flat}}\label{sec:proofcor1}

\begin{proof}[Proof of Corollary \ref{cor:flat}]
We now have that $\oeps=0$ and so
$$\gamma=\max \left\{ 2h_1,\frac{\ell h_2}{C\opsi }\right\}.$$

If the length $\ell$ does not satisfy \eqref{eq:ellcond} then we have	
\begin{equation}\label{eq:ellnotcond}	
\mbox{either }\quad  \ell < 2 h_1,\quad  \mbox{ or } \quad \ell^2 < C   \lambda_1 \lambda_2 \, \opsi h_2,
\end{equation}	
which gives the first two terms in \eqref{eq:ellbound}.

If $\ell$ satisfies \eqref{eq:ellcond}, then we note that 
since $h_1 \leq \delta/4$, condition \eqref{eq:condellh2} implies
$ \gamma < \delta/2$
and so we can use Theorem \ref{th:flatpartbound} to find
\[
 \left( \frac{ \ell^2}{C \lambda_1 \lambda_2 \opsi^2}\right)^4    \ln \left(\frac{\gamma + \delta}{  \gamma}\right)
 \leq  1
\]
Setting $u:=\frac{\ell}{ \opsi\sqrt {C \lambda_1 \lambda_2}}$, we rewrite this inequality as
\begin{equation}\label{eq:ineqflat1}
u^8   \ln \left(\frac{\gamma(u) + \delta}{\gamma(u)}\right)
 \leq  1,
\end{equation}
where $\gamma(u)= \max \left\{ 2h_1,\sqrt { \frac {\lambda_1 \lambda_2} C} h_2 u \right\}$.

When $\gamma(u)=2h_1\leq \delta/2 $, then \eqref{eq:ineqflat1} implies 
\begin{equation}\label{eq:u1} 
u \leq  \left[\ln\left(\frac\delta{2 h_1}\right)\right]^{-1/8}.
\end{equation}

When $\gamma(u)=  \sqrt { \frac {\lambda_1 \lambda_2} C} h_2 u $,
the assumption $ \frac{\sqrt{C \lambda_1 \lambda_2} h_2}{\delta} \leq 1$ implies $\gamma(u) \leq \frac{\delta u } {C}$ and so 
 the  inequality \eqref{eq:ineqflat1} gives
 $$
u^8   \ln \left(1+ \frac{C}{u}\right)\leq u^8   \ln \left(1+ \frac{\delta}{\gamma(u)}\right)\leq 1.
$$
Thus we obtain $u\leq C$
(since $C\geq1$ and $u\mapsto u^8   \ln \left(1+ \frac{C}{u}\right)\leq u^8$ is increasing). 
It follows that $\gamma(u)=  \sqrt { \frac {\lambda_1 \lambda_2} C} h_2 u \leq \sqrt{C\lambda_1\lambda_2} h_2$
and Inequality \eqref{eq:ineqflat1} then yields:
\begin{equation}\label{eq:u2}
 u^8\leq \left[ \ln \left(1+ \frac{\delta}{\gamma(u)}\right)\right]^{-1}\leq  
 \left[ \ln \left(\frac{\delta}{\sqrt{C \lambda_1 \lambda_2} h_2}\right)\right]^{-1}.
 \end{equation}

Inequalities \eqref{eq:u1} and \eqref{eq:u2} gives the last two terms in \eqref{eq:ellbound}
and conclude the proof of this first corollary.
\end{proof}

\section{Proof of corollary \ref{cor:regularity}}\label{sec:proofcor2}

Before proving Corollary \ref{cor:regularity}, we state the following lemma which is proved at the end of this section.
\begin{lemma}
	\label{lemma:sub}
	Let $\psi$ be a convex function on $\Omega$ and let $x,x'\in \Omega \times\Omega $. Denote $\ell=|x-x'|$ 
	and  
	\begin{equation}\label{eq:oeps2}
	\oeps = - \min_{t\in [0,1]} \psi((1-t)x+tx') -[(1-t)\psi(x)+t\psi(x')].
	\end{equation}
	Then, for any  $z \in \partial \psi(x)$ and $z' \in \partial \psi (x')$, we have that 
	\begin{equation}
	\label{eq:dualitybound}
	|z-z'| \geq 2 \frac{\oeps }{\ell} \mbox{.}
	\end{equation}	
\end{lemma}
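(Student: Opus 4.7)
The plan is to extract geometric information from the dip point itself: let $t^\star\in[0,1]$ achieve the minimum defining $\oeps$, and set $y^\star = (1-t^\star)x + t^\star x'$, so by construction
\[
\psi(y^\star) = (1-t^\star)\psi(x) + t^\star\psi(x') - \oeps.
\]
If $\oeps = 0$ the claimed bound is vacuous, so I may assume $\oeps>0$, which forces $t^\star\in(0,1)$ because at the endpoints $\psi(y_t)$ agrees with the chord.

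Next I would plug $y^\star$ into the two defining inequalities of the subdifferentials. From $z\in\partial\psi(x)$, the inequality $\psi(y^\star)\geq \psi(x) + z\cdot(y^\star - x) = \psi(x) + t^\star(x'-x)\cdot z$ combines with the identity above to give
\[
t^\star\bigl[\psi(x')-\psi(x) - (x'-x)\cdot z\bigr] \geq \oeps,
\]
so that $\psi(x')-\psi(x) - (x'-x)\cdot z \geq \oeps/t^\star \geq \oeps$, since $t^\star\leq 1$. Applying the same trick with $z'\in\partial\psi(x')$ and $\psi(y^\star)\geq \psi(x') + z'\cdot(y^\star - x')$ yields the symmetric inequality
\[
(x'-x)\cdot z' - \bigl[\psi(x')-\psi(x)\bigr] \geq \oeps/(1-t^\star) \geq \oeps.
\]

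Adding these two estimates cancels the $\psi(x')-\psi(x)$ term and produces $(x'-x)\cdot(z'-z) \geq 2\oeps$. A single application of the Cauchy--Schwarz inequality then gives $|z-z'|\,\ell \geq (x'-x)\cdot(z'-z) \geq 2\oeps$, which is precisely \eqref{eq:dualitybound}. There is no real obstacle here: the only point worth being careful about is the dichotomy $\oeps=0$ versus $\oeps>0$ (to ensure $t^\star\in(0,1)$ so that the divisions $\oeps/t^\star$ and $\oeps/(1-t^\star)$ are legitimate), and noting that the bound $|z-z'|\geq 2\oeps/\ell$ as stated is slightly weaker than the sharper $|z-z'|\geq 4\oeps/\ell$ one actually gets by keeping the $1/(t^\star(1-t^\star))\geq 4$ factor.
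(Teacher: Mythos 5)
Your proof is correct and follows essentially the same route as the paper's: locate the minimizer $y^\star$ on the chord, use the two subdifferential inequalities at $y^\star$ to extract the dip $\oeps$ (weighted by $1/t^\star$ and $1/(1-t^\star)$), add them, and finish with Cauchy--Schwarz. Your observation that keeping $1/t^\star + 1/(1-t^\star) \geq 4$ yields the sharper $|z-z'|\geq 4\oeps/\ell$ is also what the paper's calculation actually produces (its final line reads $2\ell|z-z'|\geq 4\oeps$, which, with $\ell=|x-x'|$, over-counts a factor of $2$ on the left but still gives the stated bound).
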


\begin{proof}[Proof of Corollary \ref{cor:regularity}]
	We recall that
	$$ x\in \pa\psi^*(z) \Longleftrightarrow z\in\pa\psi(x)$$
	so we want to use \eqref{eq:flatpartbound2} to prove \eqref{eq:reg}. But in order to apply 
	\eqref{eq:flatpartbound2}, we first need to prove that the conditions of Theorem \ref{th:flatpartbound} are satisfied.
	We will first prove that
	\begin{equation}\label{eq:first}
	\pa\psi^*(z) \subset \Omega^{\delta/2} \quad\forall z\in \pa \psi(\Omega^\delta).
	\end{equation}
	This is not obvious, since  the definition of $\pa \psi(\Omega^\delta)$ only guarantees that there exists at least one 
	$\bar x\in \Omega^\delta$ such that $z\in\pa\psi(\bar x)$ (in other words
	$ \pa\psi^*(z) \cap \Omega^{\delta/2}\neq \emptyset$).
	We will prove \eqref{eq:first} by contradiction: Assume that there exists also $x\in \Omega\setminus \Omega^{\delta/2} $ such that 
	$z\in\pa\psi(x)$. Since $\psi$ is convex, this implies that $\psi$ must have a flat part along the segment $[\bar x,x]$. Indeed, the definition of the subdifferential implies that
	$$ \psi(tx+(1-t)\bar x) \geq \psi(\bar x) +t z\cdot (x-\bar x) \qquad \forall t\in[0,1]$$
	and
	$$ \psi(tx+(1-t)\bar x) \geq \psi(x) +(1-t) z\cdot (\bar x-x)\qquad \forall t\in[0,1]$$
	and a linear combination of those inequality yields
	$$
	\psi(tx+(1-t)\bar x) \geq (1-t)\psi(\bar x) + t\psi(x)\qquad \forall t\in[0,1].
	$$
	The convexity of $\psi $ implies that we must have equality in this inequality.
	
	After possibly replacing $x$ with the point $[\bar x,x] \cap \pa\Omega^{\delta/2}$, we deduce (since $\bar x\in \Omega^\delta$) that $\psi$ has a flat part of size at least $\delta/2$ in the set $\Omega^{\delta/2}.$
	By Corollary \ref{cor:flat}, this is impossible if 
	$h_1\leq k_1(\delta)$ and $h_2\leq k_2(\delta)$ for some functions $k_1$, $k_2$ depending only on $\lambda_1\lambda_2$ and $L_\infty$.
	This proves that \eqref{eq:first} must hold.
	\medskip

	Next, we  use Theorem \ref{th:flatpartbound}. We denote $\ell = |x-x'|$ and 
	assume that $h_1\leq \delta$ and that $\ell$ satisfies:
	\begin{equation}\label{eq:cond3}
	\ell \geq 2 h_1,\quad   \ell \geq \max\left\{ \sqrt{C  \lambda_1 \lambda_2 \, K h_2 } , \frac{\lambda_1\lambda_2}{\delta} h_2\right\}
	\end{equation}
	Then $h_1$ and $h_2$ satisfy
	\begin{equation}\label{eq:h1h2}
	h_1\leq \min\left\{ \delta , \ell/2 \right\} \quad h_2 \leq \min\left\{ \sqrt{\frac{\delta \ell}{\lambda_1\lambda_2}}, \frac{\ell^2}{C\lambda_1\lambda_2 \, K }\right\} .
	\end{equation}
	In particular, $h_1$ and $h_2$ satisfies \eqref{eq:ellcond}	
	and so we can apply Theorem \ref{th:flatpartbound} to get (see \eqref{eq:flatpartbound2})
	\begin{equation*}\label{eq:lhjk}
	\max \left\{   \oeps   ,h_1 K  , \ell h_2 \right\}  \geq 
	\delta K   \min\left\{  \frac{1}{\exp \left( \frac{C^4 \lambda_1^4 \lambda_2^4 K^8}{ \ell^8} \right) -1} , 1\right\}
	\end{equation*}
	Furthermore, under conditions \eqref{eq:h1h2} we can use 
	Lemma \ref{lem:opsibound} to write
	\[
        K \geq \left( \frac{\delta \ell}{\lambda_1\lambda_2}\right)^{1/2}.
	\]
	It follows that (recall that $D=\mathrm{diam}\, \Omega_1$),
	\[ 
	\frac{C^4 \lambda_1^4 \lambda_2^4 K^8}{\ell^8} \geq C^4 \left( \frac \delta \ell\right)^4 \geq C^4 \left( \frac \delta D \right)^4 
	\]
	and so
	\[
	\frac{1}{\exp \left( \frac{C^4 \lambda_1^4 \lambda_2^4 K^8}{ \ell^8} \right) -1} \leq
	\frac{1}{\exp \left(  C^4 \left( \frac \delta D \right)^4 \right) -1}.
	\]
	We deduce (using \eqref{eq:lhjk}) that there exists a constant $C_0$, depending on $D$, $\delta$ and the dimension such that
	\[
	\max \left\{   \oeps   , K h_1 , D  h_2 \right\}  \geq 
	\frac  {1}  {C_0}  \frac{K}{\exp \left( \frac{C^4 \lambda_1^4 \lambda_2^4 K^8}{ \ell^8}  \right)-1}.
	\]
        We now observe the following elementary fact:
        \begin{equation}
\forall a>0,\qquad u\mapsto \frac{u}{\exp \left( a u^8 \right) -1}\ \mbox{is monotone decreasing in } u.\label{stat:ospilin}
        \end{equation}
        This implies in particular that for all $\ell$ we have
\[
 \frac{\opsi }{\exp \left( \frac{C^4 \lambda_1^4 \lambda_2^4 \opsi^8}{ \ell^8} \right) -1}
 \geq  \frac{L_\infty }{\exp \left( \frac{C^4 \lambda_1^4 \lambda_2^4 L_\infty^8}{ \ell^8} \right) -1},
 \]
and so
 \begin{equation}\label{eq:bbnm}
	\max \left\{   \oeps   , K h_1 , D  h_2 \right\}  \geq 
	\sigma(\ell) : = \frac  {1}  {C_0}  \frac{L_{\infty} }{\exp \left( \frac{C^4 \lambda_1^4 \lambda_2^4 L_{\infty}^8}{ \ell^8}  \right)-1},
	\end{equation}
	where the function $\sigma(\ell) $ is monotone increasing and satisfies $\lim_{\ell\to 0^+} \sigma(\ell)=0$.
In other words
	\begin{equation*}\label{eq:eoo}
	\mbox{either }  \frac{\oeps}{\ell} \geq \frac{\sigma(\ell)}{\ell} \mbox{ or } K h_1 \geq \sigma(\ell) 
	\mbox{ or } D h_2 \geq \sigma(\ell). 
	\end{equation*}
	Since both functions $\ell \mapsto \sigma(\ell)$ and $\ell \mapsto \frac{\sigma(\ell)}{\ell}$
	are monotone increasing (for the second one, this is a consequence of \eqref{stat:ospilin} again),
	we can introduce their inverses $\sigma_1$ and $\sigma_2$.
	The conditions above 
	are then equivalent to
	\[
        \ell \leq \max\left\{ \sigma_2\left(\frac{\oeps}{\ell}\right), \sigma_1(Kh_1),\sigma_1(D h_2)\right\}.
	\]
Combining this with \eqref{eq:cond3}, we deduce that for all $\ell>0$ we have
	\[ 
	\ell 
	\leq \max \left\{   \sigma_2\left(\frac{\oeps}{\ell} \right) , \max\left\{ \sigma_1(K h_1), 2h_1\right\}  , \max \left\{ \sigma_1( D h_2), \sqrt{C  \lambda_1 \lambda_2 \, L_\infty h_2},\frac{\lambda_1\lambda_2}{\delta} h_2 \right\} \right\} 
	\]
	and the general result follows, recalling that $\ell=|x-x'|$ and that Lemma \ref{lemma:sub} gives
	$|z-z'| \geq 2 \frac{\oeps }{\ell}$.

        It remains to treat the special case $h_1=h_2=0$,  where we immediately obtain that
\[
|x-x'|\leq \sigma_2(|z-z'|),
\]
proving that for any given $z$ the sub-differential of $\psi^*$ is always reduced to one point (take $z=z'$ and any $x,\;x'\in \partial\psi^*(z)$). Consequently $\psi^*$ is $C^1$ as claimed.

To bound $\sigma_2$, we trivially observe that 
\[
\frac{u}{\exp(a\,u^8)-1}\leq 2\,\frac{a^{-1/8}}{\exp(a\,u^8/2)-1}.
\]
Consequently for some numerical constant $\tilde C$
\[
\sigma_2^{-1}(\ell)=2\,\frac{\sigma(\ell)}{\ell}\geq \frac  {1}  {\tilde C}  \frac{\lambda_1^{-1/2}\,\lambda_2^{-1/2}}{\exp \left( \frac{\tilde C^4 \lambda_1^4 \lambda_2^4 L_{\infty}^8}{ \ell^8}  \right)-1}.
\]
Therefore for some $\tilde C$
\[
\sigma_2(w)\leq \tilde C\,\frac{\sqrt{\lambda_1\,\lambda_2}\,L_\infty}{\left(\log\left(1+\frac{1}{\tilde C\,\sqrt{\lambda_1\,\lambda_2}\,w}\right)\right)^{1/8}},
\]
which concludes the proof.
\end{proof}

\begin{proof}[Proof of lemma \ref{lemma:sub}]
	By definition of $\oeps $, there exists $y \in (x,x')$, with $y=tx+(1-t)x'$ for some $t \in (0,1)$ such that 
	\begin{equation}
	\label{eq:affinemin}
	\psi(y) = t \psi(x)+(1-t) \psi(x') - \oeps. 
	\end{equation}
	By the definition of subdifferential we have that
	\begin{align}
	\label{eq:subdiffmin}
	\psi(z) & \geq \psi(x') + y' \cdot (z-x') \\
	\psi(z) & \geq \psi(x') + y'' \cdot (z-x'') \mbox{.}\nonumber
	\end{align}
	Plugging (\ref{eq:affinemin}) into the inequalities (\ref{eq:subdiffmin}) yields
	\begin{align*}
	y' \cdot (x'-x'') & \geq \psi(x')- \psi(x'') +\frac{\oeps }{1-t} \\
	-y'' \cdot (x'-x'') & \geq \psi(x'')-\psi(x') + \frac{\oeps }{t} \mbox{,}
	\end{align*} 
	so that finally, by adding both inequalities, we get
	\begin{equation*}
	2 \ell |y'-y''|\geq (y'-y'') \cdot (x'-x'') \geq \frac{\oeps }{1-t}+ \frac{\oeps }{t} \geq 4 \oeps  \mbox{,}
	\end{equation*}
	which concludes the proof.
\end{proof}


\section{Proof of Theorem \ref{thm:optimal}}\label{sec:proof}
Theorem \ref{thm:optimal} follows from the following result together with well known facts from the theory of optimal transportation:
\begin{theorem}\label{thm:sub}
Let  $\mu,\;\nu$ be two probability measures on $\R^n$. 
Assume $\Gamma\subset \R^{2n}$ is a closed set such that for any Borel set $O$ there holds:
  \begin{equation}\begin{split}
  &\mu(O)\leq \nu\left(\{y\in \R^n\,|\;\exists x\in O,\ (x,y)\in \Gamma\}\right),\\
  &\nu(O)\leq \mu\left(\{x\in \R^n\,|\;\exists y\in O,\ (x,y)\in \Gamma\}\right).
\end{split}\label{assumemunu}
  \end{equation}
Then there exists $\pi\in\Pi(\mu,\nu)$ concentrated on $\Gamma$ (that is $\mathrm{Supp}(\pi) \subset \Gamma$).
\end{theorem}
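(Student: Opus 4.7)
The plan is to prove Theorem \ref{thm:sub} by discretization: approximate $\mu$ and $\nu$ by empirical measures with equal-mass atoms, apply Hall's marriage theorem on a suitable bipartite graph whose edges encode an $\epsilon$-fattening of $\Gamma$, and then pass to the weak limit. Throughout, the closedness of $\Gamma$ and the symmetric form of \eqref{assumemunu} are what drive the argument.

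\textbf{Step 1: Discretization.} Fix a refining sequence of partitions of $\R^n$ by cubes of side length $\eta_k \to 0$. Using these, construct empirical approximations $\mu_k = \frac{1}{N_k}\sum_{i=1}^{N_k}\delta_{x_i^k}$ and $\nu_k = \frac{1}{N_k}\sum_{j=1}^{N_k}\delta_{y_j^k}$ with a common number of atoms $N_k \to \infty$, so that $\mu_k \rightharpoonup \mu$ and $\nu_k \rightharpoonup \nu$ weakly. (One first approximates by rational-weighted sums and then splits each atom into equal subatoms of weight $1/N_k$.) Fix also $\epsilon_k \to 0$ with $\eta_k \ll \epsilon_k$.

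\textbf{Step 2: Hall's condition on the discrete graph.} Form the bipartite graph $G_k$ on $X_k = \{x_i^k\}$ and $Y_k = \{y_j^k\}$ with an edge between $x_i^k$ and $y_j^k$ whenever $(x_i^k, y_j^k) \in \Gamma^{\epsilon_k}$, the closed $\epsilon_k$-neighborhood of $\Gamma$. For any $S \subset X_k$, let $O_S$ be the union of the partition cubes containing the atoms indexed by $S$. Then $\mu(O_S) \geq |S|/N_k$ by construction. The first inequality in \eqref{assumemunu} gives $\nu(\Gamma(O_S)) \geq \mu(O_S)$, and since $\eta_k \ll \epsilon_k$ one has $\Gamma(O_S) \subset \{y : (x_i^k, y) \in \Gamma^{\epsilon_k} \text{ for some } i \in S\}$, whose $\nu$-mass therefore exceeds $|S|/N_k$. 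Weak convergence $\nu_k \rightharpoonup \nu$ (applied to a slightly further fattening of this open set) then yields $|N_{G_k}(S)| \geq |S|$ for $k$ large, which is Hall's condition.

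\textbf{Step 3: Matching and weak limit.} By Hall's theorem $G_k$ has a perfect matching, producing $\pi_k = \frac{1}{N_k}\sum_i \delta_{(x_i^k, y_{\sigma_k(i)}^k)} \in \Pi(\mu_k, \nu_k)$ with $\mathrm{Supp}(\pi_k) \subset \Gamma^{\epsilon_k}$. The sequence $(\pi_k)$ is tight because each of its marginals is (being weakly convergent), so Prokhorov yields a subsequential weak limit $\pi$. Passing to the limit in the marginals gives $\pi \in \Pi(\mu, \nu)$. Closedness of $\Gamma$ and $\epsilon_k \to 0$ then force $\mathrm{Supp}(\pi) \subset \Gamma$: for any open $U$ disjoint from $\Gamma$ one has $U \cap \Gamma^{\epsilon_k} = \emptyset$ for large $k$, hence $\pi(U) \leq \liminf_k \pi_k(U) = 0$.

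\textbf{Main obstacle.} The delicate point is calibrating the three small parameters $1/N_k$, $\eta_k$, and $\epsilon_k$ so that Hall's integer inequality $|N_{G_k}(S)| \geq |S|$ truly holds on the finite graph and not merely up to an error. The continuous inequalities \eqref{assumemunu} must be converted into strict discrete inequalities after discretization of both sides and fattening of $\Gamma$; this is where it is convenient to have the symmetric form of \eqref{assumemunu}, since unmatched atoms could appear on either side due to discretization artefacts. A clean resolution is to fix first a weak-convergence modulus for $\mu_k, \nu_k$, then choose $\eta_k$ so that the boundary effects are negligible compared to $1/N_k$, and finally choose $\epsilon_k \gg \eta_k$ small enough that $\Gamma^{\epsilon_k}$ still approximates $\Gamma$ in the limit.
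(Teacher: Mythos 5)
Your high-level strategy matches the paper's: discretize $\mu,\nu$ to equal-mass atoms, obtain a perfect matching (via Hall's theorem; the paper reproves Hall's theorem as its Proposition on stochastic $0$-$1$ matrices), and pass to the weak limit using the closedness of $\Gamma$. The limiting argument in your Step~3 is sound. However, there is a genuine gap in Step~2 at precisely the point you flag as the ``main obstacle,'' and the resolution you sketch does not work.

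The issue is the conversion of $\nu(W)\geq |S|/N_k$ (where $W$ is the open ``$\Gamma^{\epsilon_k}$-neighborhood'' of the atoms indexed by $S$) into the \emph{exact} integer inequality $|N_{G_k}(S)|\geq|S|$. When you turn $\nu$ into $\nu_k=\frac{1}{N_k}\sum\delta_{y_j^k}$, each partition cube $C$ can carry at most $\lfloor N_k\,\nu(C)\rfloor$ atoms, so discretization can lose up to $1/N_k$ of mass \emph{per cube}. Since $W$ may cover many cubes, the total deficit is of order $(\#\text{cubes in }W)/N_k$, which does not vanish relative to $1/N_k$ and cannot be made small by shrinking $\eta_k$ — the $1/N_k$ per-cube roundoff is independent of $\eta_k$. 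Your fallback, ``first approximate by rational-weighted sums,'' replaces $\mu$ by a different measure $\tilde\mu$ for which the hypothesis \eqref{assumemunu} must be re-verified, and that verification is exactly the same roundoff problem in disguise. Thus the discrete Hall condition is not actually established.

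The paper resolves this with two devices you are missing. First, it separates the discretization into two stages: the dyadic-cube step produces Dirac measures with \emph{unequal} masses $m_i^N=\mu(C_i^k)$ (no roundoff at all), so the Hall-type condition transfers exactly to the discrete data. Second, when further reducing unequal masses to equal masses $1/N$, it introduces a \emph{mass reservoir}: two extra atoms $x_0$, $y_0$ carry the total leftover mass $k(0)/N$ and $l(0)/N\leq M/N$, and in the bipartite relation $\gamma_N$ the reservoir $K(0)$ is connected to every $l\neq L(0)$ and $L(0)$ to every $k\neq K(0)$ (but $K(0)$ is not connected to $L(0)$). This guarantees the exact integer Hall inequality for every subset, since the reservoir's neighborhood absorbs the cumulative roundoff. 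Without such a device, Step~2 as written fails for subsets $S$ that span many cubes.
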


\begin{proof}[Proof of Theorem \ref{thm:optimal}]
We note that when $\Gamma = \mathrm{Graph}\,{\pa \psi}$ for some convex function $\psi$, then \eqref{assumemunu} is equivalent to the conditions \eqref{eq:ineq1} and \eqref{eq:ineq2}.
Theorem \ref{thm:sub} thus implies that there exists $\pi\in\Pi(\mu,\nu)$ such that $\mathrm{Supp}(\pi) \subset \Gamma$.
The result then follows from a classical result of measure theory  (see for example Theorem 2.12 in \cite{villani2003topics}).
\end{proof}

We now turn to the proof of Theorem \ref{thm:sub}. The first step is to prove the result when $\mu$ and $\nu$ are both sums of Dirac masses with identical mass:
$$ \mu = \frac 1 N \sum_{i=1}^N \delta_{x_i}, \quad  \nu = \frac 1 N \sum_{j=1}^N \delta_{x_j}.$$
In that case, we define the $N\times N$ matrix $A=(A_{ij})$ by
$$ A_{ij} = 
\left\{
\begin{array}{ll}
1 & \mbox{ if } (x_i,y_j)\in \Gamma, \\
0 & \mbox{ otherwise}
\end{array}
\right.
$$
and  Theorem \ref{thm:sub} is equivalent to 
\begin{proposition}\label{prop:sub1}
Let $A$ be a $N\times N $ matrix  with $A_{ij}\in \{0,\;1\}$ and such that for any $I,\, J$ subsets of  $\{1,\ldots,N\}$, we have
  \begin{equation}
|I|\leq |\{j\,|\;\sum_{i\in I} A_{ij}>0\}|,\quad |J|\leq |\{i\,|\;\sum_{j\in J} A_{ij}>0\}|. \label{enoughmass}
\end{equation}
Then there exists a stochastic matrix $\pi=(\pi_{ij})$ such that 
$\pi_{ij} \in \{0,1\}$, $\sum_j \pi_{ij}=1$, $\sum_i \pi_{ij}=1$ and satisfying $\pi_{ij}=0$ whenever $A_{ij}=0$. 
  \end{proposition}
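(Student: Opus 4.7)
The statement is exactly Hall's marriage theorem: viewing $A$ as the adjacency matrix of a bipartite graph $G$ on vertex sets $\{1,\dots,N\} \sqcup \{1,\dots,N\}$ with an edge $i \sim j$ iff $A_{ij}=1$, a matrix $\pi$ of the type sought corresponds to a perfect matching in $G$. The assumption \eqref{enoughmass}, specifically the first inequality $|I|\leq |N(I)|$ where $N(I):=\{j\,|\;\sum_{i\in I}A_{ij}>0\}$ is the neighborhood of $I$, is precisely Hall's condition. (The second inequality in \eqref{enoughmass} is in fact implied by the first together with $|\mu|=|\nu|$ in the discrete setting, but both hold by hypothesis.) My plan is therefore to prove Hall's theorem directly, by induction on $N$.

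The base case $N=1$ is immediate: \eqref{enoughmass} applied to $I=\{1\}$ gives at least one column $j$ with $A_{1j}=1$, and setting $\pi_{1j}=1$ completes the matching. For the inductive step, I would distinguish two cases based on whether any \emph{proper} non-empty subset $I_0 \subsetneq \{1,\dots,N\}$ is \emph{tight}, i.e.\ satisfies $|N(I_0)|=|I_0|$.

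In the non-tight case (every proper non-empty $I$ satisfies $|N(I)|\geq |I|+1$), I pick any edge $(1,j_0)$ with $A_{1j_0}=1$, set $\pi_{1j_0}=1$, and delete row $1$ and column $j_0$. For any $I \subset \{2,\dots,N\}$, its neighborhood in the reduced graph has size at least $|N(I)|-1 \geq (|I|+1)-1 = |I|$, so Hall's condition is preserved and the inductive hypothesis gives the remaining matching. In the tight case, I split $\{1,\dots,N\}$ into $I_0$ and $I_0^c$, and $\{1,\dots,N\}$ (columns) into $N(I_0)$ and $N(I_0)^c$. The restriction of $A$ to $I_0 \times N(I_0)$ still satisfies Hall's condition (any $I \subset I_0$ has the same neighborhood in the restricted graph as in $G$). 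The delicate point—which is where I expect the main effort—is verifying Hall's condition on the restriction to $I_0^c \times N(I_0)^c$: for any $J \subset I_0^c$, I apply \eqref{enoughmass} to $I_0 \cup J$ to obtain $|N(I_0 \cup J)|\geq |I_0|+|J|$, and since $N(I_0 \cup J) = N(I_0)\cup N(J)$, the portion of $N(J)$ lying in $N(I_0)^c$ has size at least $|J|$, which is exactly what the reduced problem requires.

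Applying the induction hypothesis to both subproblems yields permutation matrices $\pi^{(1)}$ on $I_0\times N(I_0)$ and $\pi^{(2)}$ on $I_0^c \times N(I_0)^c$, which combine into a single $\pi$ satisfying all the required properties. The only conceptual obstacle is the case analysis in the inductive step; the arithmetic is routine once the tight-set dichotomy is in place. Finally, I note that a similar Hall-type argument will presumably serve as the workhorse for reducing the general Theorem~\ref{thm:sub} to the discrete case, by approximating $\mu$ and $\nu$ by empirical measures on $N$ equally weighted atoms and passing to the limit.
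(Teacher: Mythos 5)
Your proposal is correct and follows essentially the same route as the paper: both prove Hall's marriage theorem by induction on $N$ with a dichotomy on whether some proper nonempty tight set $I_0$ (i.e.\ with $|N(I_0)|=|I_0|$) exists, removing a single matched edge in the non-tight case and splitting into $I_0\times N(I_0)$ and $I_0^c\times N(I_0)^c$ in the tight case. Your version is marginally leaner because you carry only the row-side Hall inequality through the induction (correctly noting the column-side inequality is redundant for a square matrix), whereas the paper re-verifies both inequalities of \eqref{enoughmass} for the sub-matrices $P$ and $Q$, which costs it an extra contradiction argument in Case~2.
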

\begin{proof}[Proof of Proposition \ref{prop:sub1}]
The proof proceeds by induction on $N$, the result being obvious for $N=1$.
We distinguish two cases: 
Whether there are strict subsets $I_0$ or $J_0$ for which there is equality in \eqref{enoughmass} or not.

\bigskip

\noindent {\bf Case 1:} 
We assume that for all strict subsets $I$ or $J$ of $\{1,\dots,N\}$, we have a strict inequality in \eqref{enoughmass}, that is
  \[
  |I|< |\{j\,|\;\sum_{i\in I} A_{ij}>0\}|,\quad |J|< |\{i\,|\;\sum_{j\in J} A_{ij}>0\}|.
  \]
We then choose any $i_0,\,j_0$ such that $A_{i_0j_0}=1$. Up to relabeling the rows and columns of $A$, we can always assume that we can take $i_0=j_0=N$ and we  consider the $N-1\times N-1$ matrix $\tilde A$ consisting of the first $N-1$ rows and columns of $A$.
We claim that $\tilde A$ satisfies \eqref{enoughmass}:
Indeed, for any $I\subset\{1,\ldots,N-1\}$, by applying \eqref{enoughmass} to $A$, we have that
  \begin{align*}
|I|
& \leq |\{j\in \{1,\ldots,N\}\,|\;\sum_{i\in I} A_{ij}>0\}|-1\\
& \leq |\{j\in \{1,\ldots,N-1\}\,|\;\sum_{i\in I} A_{ij}>0\}|\\
& =|\{j\,|\;\sum_{i\in I} \tilde A_{ij}>0\}|.
\end{align*}
and a similar inequality for $J\subset\{1,\ldots,N-1\}$.

Therefore by induction, there exists a stochastic $(N-1)\times (N-1)$ matrix $\tilde \pi$ with $\tilde \pi \in \{0,1\}$, $\tilde\pi_{ij}=0$ if $\tilde A_{ij}=0$ and $\sum_i \tilde\pi_{ij}=1$, $\sum_j \tilde\pi_{ij}=1$. 
We can then define the matrix $\pi$ by setting $\pi_{ij}=\tilde \pi_{ij}$ if $i\leq N-1$ and $j\leq N-1$, 
$\pi_{N N}=1$ and $\pi_{ij}=0$ otherwise. It is straightforward to check that $\pi$ satisfies all requirements.

\bigskip

\noindent {\bf Case 2:} 
We assume that  there exists a strict subset $I_0$ or $J_0$ of $\{1,\dots,N\}$ for which we have equality in  \eqref{enoughmass}. For example, we assume  that there is a strict subset $I_0$ such that
\begin{equation}\label{eq:I0}
|I_0|= |\{j\,|\;\sum_{i\in I_0} A_{ij}>0\}|
\end{equation}
and we denote 
$J_0=\{j\,|\;\sum_{i\in I_0} A_{ij}>0\}$. 


Since $|I_0|=|J_0|$, we can define the square matrices $P$ and $Q$ by
\[
P_{ij}=A_{ij}\,\mathbb{I}_{i\in I_0, j\in J_0},\quad Q_{ij}=A_{ij}\,\mathbb{I}_{i\in I_0^c, j\in J_0^c}. 
\]
and we are going to show that $P$ and $Q$ satisfy \eqref{enoughmass} on $I_0\times J_0$, and $I_0^c\times J_0^c$ respectively.

We start with $P$: for any $I\subset I_0$, \eqref{enoughmass} implies:
\[
|I|\leq |\{j\,|\;\sum_{i\in I} A_{ij}>0\}|.
\]
The definition of $J_0$ implies that $A_{ij}=0$ if $i\in I_0$ and $j\not\in J_0$. Since $I\subset I_0$, we can thus write:
\[
\{j\,|\;\sum_{i\in I} A_{ij}>0\}=\{j\in J_0\,|\;\sum_{i\in I} A_{ij}>0\}=\{j\in J_0\,|\;\sum_{i\in I} P_{ij}>0\},
\]
and we deduce that
\begin{equation}
|I|\leq |\{j\in J_0\,|\;\sum_{i\in I} P_{ij}>0\}| \qquad \mbox{ for all $I\subset I_0$}\label{enoughmassM1}
\end{equation}
which shows that $P$ satisfies the first inequality in \eqref{enoughmass}. To prove the second inequality, we proceed by contradiction: Assume that there is a subset $J\subset J_0$ such that
\[
|J|> |\{i\in I_0\,|\;\sum_{j\in J} P_{ij}>0\}|  = |\{i\in I_0\,|\;\sum_{j\in J} A_{ij}>0\}|,
\]
then denote $\tilde I=\{i\in I_0\,|\;\sum_{j\in J} A_{ij}>0\}$ and define $I'=I_0\setminus \tilde I$, $J'=J_0\setminus J$. Since $|J|>|\tilde I|$, we have that
\begin{equation}\label{eq:I'}
|I'|=|I_0|-|\tilde I|>|I_0|-|J|=|J_0|-|J|=|J'|.
\end{equation}
Consider any $j'$ s.t. $\sum_{i\in I'} A_{ij'}>0$. Since $A_{ij'}=0$ for $i\in I_0$ and $j'\not \in J_0$, we have that $j'\in J_0$. Next, let $i'\in I'$ be such that $A_{i'j'}>0$ (which exists by the choice of $j'$). Since $i'\in I'= I_0\setminus \tilde I$, the definition of $\tilde I$ implies that $A_{i'j}=0$ for all $j\in J$. So we must have   $j'\not\in J$. We thus have $j'\in J'$.
We just proved that
\[
\{j'\,|\;\sum_{i'\in I'} A_{i'j'}>0\}=\{j'\in J_0\,|\;\sum_{i'\in I'} A_{i'j'}>0\}\subset J'.
\]
Together with \eqref{eq:I'}, this implies that
\[
|I'|>|J'|\geq |\{j'\,|\;\sum_{i'\in I'} A_{i'j'}>0\}|,
\]
which contradict \eqref{enoughmass}. We can then conclude that $P$ satisfies \eqref{enoughmass} on $I_0\times J_0$.
\medskip

We now turn to $Q$ and start by considering any $J\subset J_0^c$. We recall that $A_{ij}=0$ for $i\in I_0$ and $j\in J\subset J_0^c$ and since $\{1,\ldots,N\}=I_0\cup I_0^c$ we have:
\[
\{i\,|\;\sum_{j\in J} A_{ij}>0\}=\{i\in I_0^c\,|\;\sum_{j\in J} A_{ij}>0\}.
\]
Applying \eqref{enoughmass} on $A$ for $J$, we get
\[
|J|\leq |\{i\,|\;\sum_{j\in J} A_{ij}>0\}|=|\{i\in I_0^c\,|\;\sum_{j\in J} Q_{ij}>0\}|,
\]
proving the corresponding half of \eqref{enoughmass} for $Q$.

Next, for any $I\subset I_0^c$, we apply \eqref{enoughmass} with $\bar I=I_0\cup I$:
\[
|I_0|+|I|=|\bar I|\leq |\bar J|,\quad \bar J=\{j\,|\;\sum_{i\in \bar I} A_{ij}>0\}.
\]
We recall that $J_0=\{j\,|\;\sum_{i\in I_0} A_{ij}>0\}$ and denote 
$J=\{j\in J_0^c\,|\;\sum_{i\in I} A_{ij}>0\}$.
We have $J\cup J_0 \supset \{j \,|\;\sum_{i\in I} A_{ij}>0\}$, and 
since
\[
\bar J=\{j\,|\;\sum_{i\in I_0} A_{ij}>0\}\bigcup \{j\,|\;\sum_{i\in I} A_{ij}>0\}.
\]
we have $\bar J=J_0\cup J$.
This implies that $|I_0|+|I|\leq |\bar J|\leq |J_0|+|J|$, that is
$$
 |I|\leq |J|=|\{j\in J_0^c\,|\;\sum_{i\in I} A_{ij}>0\}|,
$$
which completes the proof that $Q$ satisfies \eqref{enoughmass}.
\medskip

We can now complete the proof:
Since $I_0\neq\emptyset$ and $I_0\neq \{1,\ldots,N\}$, $P$ and $Q$ have dimensions strictly less than $N$ and we may apply our induction assumption. This gives us $p_{ij}$ on $I_0\times J_0$ and $q_{ij}$ on $I_0^c\times J_0^c$, stochastic matrices,
$$
\ \sum_j p_{ij}=1\quad \forall i\in I_0,\qquad  \sum_{j} q_{ij}=1\quad \forall i\in I_0^c,$$
$$
\ \sum_j p_{ij}=1\quad  \forall j\in J_0,\qquad  \ \sum_j q_{ij}=1 \quad \forall j\in J_{0}^c,
$$
with $p_{ij}=0$ if $P_{ij}=0$ and $q_{ij}=0$ if $Q_{ij}=0$. We simply extend $p$ and $q$ by 0 on the whole $\{1,\ldots,N\}^2$ and define $\pi=p+q$. 

\end{proof}

Proposition \ref{prop:sub1} proves Theorem \ref{thm:sub} when the measures $\mu$ and $\nu$ are 
both sums of Dirac masses with identical mass.
Our next step is to extend that result to general sums of Dirac masses.

\begin{proposition}\label{prop:sub2}
Assume that 
  \[
\mu=\sum_{i=1}^{M_1} m_i\,\delta_{x_i},\quad \nu=\sum_{j=1}^{M_2} n_j\,\delta_{y_j}
  \]
for some points $x_1,\ldots,x_{M_1}$ and $y_1,\ldots, y_{M_2}$ in $\R^n$ and some (positive) masses $m_1,\ldots,m_{M_1}$, and $n_1,\ldots,n_{M_2}$.
Then Theorem \ref{thm:sub}  holds.
\end{proposition}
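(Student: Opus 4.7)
The plan is to reduce to Proposition~\ref{prop:sub1} in two steps: a duplication argument for rational (commensurable) masses, followed by a rational approximation together with compactness for arbitrary positive masses. Note that choosing $O = \R^n$ in \eqref{assumemunu} already forces $\sum_i m_i = \sum_j n_j$.

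For the rational case $m_i = p_i/N$, $n_j = q_j/N$ with positive integers $p_i, q_j$ summing to $N$, I would replace each index $i$ by $p_i$ duplicate copies $(i,a)$ and each $j$ by $q_j$ copies $(j,b)$, obtaining two duplicated index sets of cardinality $N$, and set $\tilde A_{(i,a),(j,b)} = 1$ if $(x_i, y_j) \in \Gamma$ and $0$ otherwise. The combinatorial condition \eqref{enoughmass} for $\tilde A$ is then a direct translation of \eqref{assumemunu}: for any subset $I$ of duplicated indices, writing $I'$ for its projection to original indices and $J'(I') = \{j : \exists i \in I',\, (x_i, y_j) \in \Gamma\}$, one has
\[
|I| \le \sum_{i \in I'} p_i \le \sum_{j \in J'(I')} q_j = \bigl|\bigl\{l : \sum_{k \in I} \tilde A_{kl} > 0\bigr\}\bigr|,
\]
where the middle inequality comes from \eqref{assumemunu} applied to $O = \{x_i : i \in I'\}$. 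Proposition~\ref{prop:sub1} then produces a $\{0,1\}$-doubly stochastic matrix $\tilde\pi$ vanishing where $\tilde A = 0$, and
\[
\pi_{ij} := \frac{1}{N} \sum_{a=1}^{p_i} \sum_{b=1}^{q_j} \tilde\pi_{(i,a),(j,b)}
\]
is easily checked to yield the required element of $\Pi(\mu, \nu)$ supported on $\Gamma$.

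For arbitrary positive masses, I would introduce the polyhedral set $\mathcal{P} \subset \R_{\ge 0}^{M_1 + M_2}$ defined by the equality $\sum_i m'_i = \sum_j n'_j$ together with the full family of inequalities coming from \eqref{assumemunu} (one for each $I' \subset \{1,\dots,M_1\}$ and one for each $J' \subset \{1,\dots,M_2\}$). All defining constraints have integer coefficients and zero right-hand sides, so the face $F \subset \mathcal{P}$ whose relative interior contains $(m,n)$ is the intersection of a rational affine subspace with $\mathcal{P}$, and rational tuples with strictly positive entries are dense in $F$ in a neighborhood of $(m,n)$. Choosing such a sequence $(m^{(k)}, n^{(k)}) \to (m, n)$, the rational case supplies plans $\pi^{(k)}$ with marginals $m^{(k)}, n^{(k)}$ supported on $\Gamma$; these are bounded in $\R^{M_1 \times M_2}$, and a subsequential limit $\pi$ has the desired marginals $m, n$ while still vanishing on $\{(i,j) : (x_i, y_j) \notin \Gamma\}$, producing the required plan.

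The main obstacle is this second step: one must simultaneously preserve \emph{all} the inequalities of \eqref{assumemunu} after rational perturbation, in particular those that may be tight at $(m,n)$. Working inside the tight face $F$ rather than in the whole polytope, and exploiting the integer coefficients of its defining constraints, is precisely what resolves this difficulty.
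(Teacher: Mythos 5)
Your proof is correct and takes a genuinely different route from the paper's. You reduce to Proposition~\ref{prop:sub1} in two clean stages: for commensurable rational masses you duplicate indices to land \emph{exactly} in the equal-mass setting of Proposition~\ref{prop:sub1} (no leftover mass to manage), and for arbitrary masses you approximate $(m,n)$ by rational tuples inside the rational constraint polyhedron $\mathcal{P}$ and pass to a compactness limit. The key point in the second stage --- restricting the rational approximants to the face of $\mathcal{P}$ whose relative interior contains $(m,n)$ --- does correctly resolve the tight-constraint difficulty you flag, since a face of a rational polyhedron is itself rational and its rational points are dense in its relative interior; positivity of the approximants then follows automatically from positivity of $(m,n)$ and closeness, and a harmless rescaling restores total mass one. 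The paper proceeds quite differently: it truncates each mass down to a multiple of $1/N$, collects the fractional leftovers at auxiliary \emph{reservoir} points $x_0$ and $y_0$, and augments the connectivity set $\gamma_N$ so that the reservoirs are connected to everything except each other; verifying the discrete hypothesis \eqref{assumemunudiscrete} for this augmented system then requires a three-case analysis according to how the test set meets $K(0)$, and the limit passage uses that the reservoir masses $k(0)/N$, $l(0)/N$ vanish. Your approach is more modular and conceptual, trading the paper's bespoke reservoir construction and case analysis for a density lemma about rational polyhedra; the paper's approach is more elementary and self-contained (it invokes no facts about polyhedral faces) and handles rational and irrational masses uniformly in a single approximation step.
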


\begin{proof}[Proof of  Proposition \ref{prop:sub2}]
By splitting points if needed, we can always assume that $M_1=M_2=M$ and that $m_i, n_j>0$ for all $i$, $j$.
We can also assume that $\Gamma$ is concentrated on $\bigcup_{i,j} \{(x_i,y_j)\}$. For this reason, we define $\gamma$ as the set of indices $(i,j)$ s.t. $(x_i,x_j)\in \Gamma$. In that discrete setting, Assumption \eqref{assumemunu} is then equivalent  to   
\begin{equation}
  \begin{split}
&\forall I\subset \{1,\ldots,M\},\quad \sum_{i\in I} m_i\leq \sum_{\{j\,|\,\exists i\in I\ s.t.\ (i,j)\in \gamma\} } n_j,\\  
& \forall J\subset \{1,\ldots,M\},\quad \sum_{i\in J} n_i\leq \sum_{\{i\,|\,\exists j\in J\ s.t.\ (i,j)\in \gamma\}} m_j.
\end{split}\label{assumemunudiscrete}
  \end{equation}
In order to use the result of Proposition \ref{prop:sub1}, we want to approximate $\mu$ and $\nu$ by measures 
 $\mu_N$, $\nu_N$ that are sums of Dirac masses with identical mass.
 To do this, given $N\geq 2\,M$, we replace the mass $m_i$ at $x_i$ (respectively the mass $n_j$ at $y_j$) into $k(i)$ masses (respectively $l(j)$ masses) $\frac 1 N$ all located at that same point, with $k(i)$ and $l(j)$ such that
$$ m_i-\frac{1}{N}\leq \frac{k(i)}{N}\leq m_i,\quad n_j-\frac{1}{N}\leq \frac{l(j)}{N}\leq n_j$$
(we can always assume that $1/N\leq \inf(\inf_i m_i,\;\inf_j n_j)$ so that $k(i),l(j)>0$). 
We note that we have some left over mass $m_i-\frac{k(i)}{N}$ at each points. By summing over all $i$ (and all $j$), the total left over masses are 
\[
 1- \sum_i \frac{k(i)}{N} = \frac{k(0)}{N}  \in\left(0, \frac{M}{N}\right) ,\quad 1-\sum_j \frac{l(j)}{N} = \frac{l(0)}{N} \leq 1\in\left(0, \frac{M}{N}\right).
\]
where $k(0)=  N- \sum_i k(i)\leq M$ and $l(0)=  N- \sum_j l(j)\leq M$.
We thus add $k(0)$ (resp. $l(0)$) masses $1/N$ at some point $x_0\neq x_i$ (resp. $y_0\neq y_j$). 

We can write
\[
\mu_N=\frac{1}{N}\sum_{k=1}^{N} \delta_{\bar x_k},\quad \nu_N=\frac{1}{N}\sum_{l=1}^{N} \delta_{\bar y_l}
\]
where the points $\bar x_k$ (resp. $\bar y_l$) are the same points as the $x_i$ or the additional distinct point $x_0$ (resp. $y_i$ and $y_0$) which we just subdivide. It is useful to introduce $K(i)$ (resp. $L(j)$), the set of indices $k$ such that $\bar x_k=x_i$ (resp. $\bar y_k=y_j$).
We have in particular $|K(i)| = k(i)$ and $|L(j)|=l(j)$.


Observe that $\mu_N$ and $\nu_N$ converge strongly to $\mu$ and $\nu$ when $N\to\infty$ since
\[
\int d|\mu_N-\mu|\leq \frac{2M}{N},\quad \int d|\nu_N-\nu|\leq \frac{2M}{N}
\]
(since there is a mass discrepancy of at most $1/N$ at each of the $M$ points $x_1,\dots,x_M$ and an additional mass of at most $M/N$ at $x_0$).

We now define $\gamma_N$ as
\[
\begin{split}
  \gamma_N=&\left(\bigcup_{(i,j)\in \gamma} K(i)\times L(j)\right)\bigcup \left(K(0)\times (\{1,\ldots,N\}\setminus L(0))\right)\\
  &\bigcup\left((\{1,\ldots,N\}\setminus K(0))\times L(0)\right). 
\end{split}\]
The first component of $\gamma_N$ is the natural extension from $\gamma$: If $x_i$ and $y_j$ were connected, then any $\bar x_k$ s.t. $\bar x_k=x_i$ is still connected to any $\bar y_l$ with $\bar y_l=y_j$. Because we lose a bit of mass on the points $\bar x_k$ and $\bar y_l$, this may not be enough to ensure that \eqref{assumemunudiscrete} holds on $\mu_N$ and $\nu_N$.
For this reason, $K(0)$ and $L(0)$ serve as a mass reservoir: $K(0)$ is connected to all $l\geq 1$ and $L(0)$ to all $k\geq 1$, but of course $K(0)$ is not connected with $L(0)$.

We now check that \eqref{assumemunudiscrete} holds for this $\gamma_N$. Let $I$ be a subset of $\{1,\dots , N\}$. We consider three cases:

\noindent\underline{If $I\subset K(0)$} then $\{l\,|\, \exists k\in I\ s.t.\ (k,l)\in \gamma_N\}=\{1,\ldots,N\}\setminus L(0)$. Therefore
\[
\mu_N(I)\leq \mu_N(K(0))\leq \frac{M}{N}\leq 1-\frac{M}{N}=\nu_N(\{l,\;\exists k\in I\ s.t.\ (k,l)\in \gamma_N\}),
\]
since $N\geq 2\,M$.

\noindent\underline{If $I\cap K(0)\neq \emptyset$ but $I\not\subset K(0)$}, then $\{l\,|\, \exists k \in I\ s.t.\ (k,l)\in \gamma_N\}=\{1,\ldots,N\}$. Trivially
\[
\mu_N(I)\leq 1=\nu_N(\{l,\;\exists k\in I\ s.t.\ (k,l)\in \gamma_N\}).
\]

\noindent\underline{If $I\cap K(0)=\emptyset$}, then denote $\bar I=\{i\,|\,\ K(i)\cap I\neq\emptyset\}$. Observe that if $(k,l)\in \gamma_N$ and $k\in K(i),\;l\in L(j)$ then for any $k'\in K(i),\; l'\in L(j)$ one also has that $(k',l')\in \gamma_N$ by the definition of $\gamma_N$.  Therefore
\begin{equation}
\{l\,|\,\exists k\in I\ s.t.\ (k,l)\in \gamma_N\}=L(0) \bigcup\left(\bigcup_{j,\; \exists i\in \bar I\ s.t.\ (i,j)\in \gamma} L(j)\right).\label{explicitinggammaNonI}
\end{equation}
Consequently by the definition of $\bar I$ we have
\[
\mu_N(I)=\frac{|I|}{N}\leq \sum_{i\in \bar I} \frac{k(i)}{N}
\]
and since $k(i)\leq N \, m_i$ from the construction of $\mu_N$ we deduce
\[
\mu_N(I)\leq \sum_{i\in \bar I} \frac{k(i)}{N}\leq \sum_{i\in \bar I} m_i=\mu(\bar I).
\]
Applying \eqref{assumemunu} to $\mu$, we get
\[
\mu_N(I)\leq \mu(\bar I)\leq \nu(\{j,\; \exists i\in \bar I\ s.t.\ (i,j)\in \gamma\})=\sum_{j,\; \exists i\in \bar I\ s.t.\ (i,j)\in \gamma} n_j.
\]
From  the construction of $\nu_N$, we have $n_j\geq \frac{l(j)}{N}$ and
\[
\begin{split}
  \mu_N(I)&\leq \sum_{j,\; \exists i\in \bar I\ s.t.\ (i,j)\in \gamma} \frac{l(j)}{N}+\sum_{j,\; \exists i\in \bar I\ s.t.\ (i,j)\in \gamma} \left(n_j-\frac{l(j)}{N}\right)\\
  &\leq \sum_{j,\; \exists i\in \bar I\ s.t.\ (i,j)\in \gamma} \frac{l(j)}{N}+\sum_{j=1}^M \left(n_j-\frac{l(j)}{N}\right)\\
  &\leq \sum_{j,\; \exists i\in \bar I\ s.t.\ (i,j)\in \gamma} \frac{l(j)}{N}+1-\sum_{j=1}^M \frac{l(j)}{N}\\
  &\leq \sum_{j,\; \exists i\in \bar I\ s.t.\ (i,j)\in \gamma} \frac{l(j)}{N}+\frac{l(0)}{N}
\end{split}
\]
Using \eqref{explicitinggammaNonI}, we deduce
 \[
\mu_N(I)\leq \nu_N(\{l,\;\exists k\in I\ s.t.\ (k,l)\in \gamma_N\}),
\]
which proves that \eqref{assumemunu} holds for the measures $\mu_N$, $\nu_N$ and the set $\gamma_N$.
\medskip

We now apply Proposition \ref{prop:sub1} to $\mu_N$ and $\nu_N$. We obtain $\pi_N$ a transference plan
\[
\pi_N=\sum_{k,l} \frac{\pi_{k,l}^N}{N} \delta_{(\bar x_k,\;\bar y_l)},\quad \sum_l \pi_{k,l}^N=1,\quad \sum_k \pi_{k,l}^N=1,\quad \pi^N_{k,l}=0\ \mbox{if}\ (k,l)\not\in \gamma_N.
\]
Because the $\bar x_k$ and $\bar y_k$ are equal to the $x_i,\; y_j$ or to $x_0$, $y_0$, we can also express $\pi_N$ as
\begin{equation}
\pi_N=\sum_{i,j\geq 1} \bar \pi_{i,j}^N\, \delta_{(x_i,\;y_j)}+\sum_{j\geq 1} \bar\pi^N_{0,j}\,\delta_{( x_0,\; y_j)}+\sum_{i\geq 1} \bar\pi^N_{0,j}\,\delta_{( x_i,\; y_0)}.\label{piNfixedpoints}
\end{equation}
Moreover for any $i\geq 1$, or any $j\geq 1$
\[
\sum_{j\geq 1} \bar \pi_{ij}^N+\pi_{i0}^N=\frac{k(i) }{N}, \quad \sum_{j\geq 1} \bar \pi_{ij}^N+\pi_{i0}^N=\frac{l(i)}{N}.
\]
By the construction of $\mu_N,\;\nu_N$, this yields that
\begin{equation}
m_i-\frac{1}{N}\leq\sum_{j\geq 1} \bar \pi_{ij}^N+\pi_{i0}^N\leq m_i,\quad n_i-\frac{1}{N}\leq\sum_{i\geq 1} \bar \pi_{ij}^N+\pi_{0j}^N\leq n_i.\label{boundpiN1}
\end{equation}
On the other hand,
\begin{equation}
  \sum_j \bar \pi_{0j}^N=\frac{k(0)}{N}\leq \frac{M}{N},\quad \sum_i \bar \pi_{i0}^N=\frac{l(0)}{N}\leq \frac{M}{N}.
  \label{boundpiN0}
\end{equation}
Since the points $x_i,\;y_j$ together with $x_0,\;y_0$ are fixed, we may simply pass to the limit in $\pi_N\to \pi$ using \eqref{piNfixedpoints} by extracting subsequences such that all $\bar\pi_{ij}^N\to \bar \pi_{ij}$ for all $i,\;j\geq 0$. By \eqref{boundpiN0}, we have that $\bar \pi_{0j}=\bar \pi_{i0}=0$ so that
\[
\pi=\sum_{i,j\geq 1} \bar \pi_{i,j}\, \delta_{(x_i,\;y_j)}.
\]
By \eqref{boundpiN1} and given that $\bar\pi_{i0}=\bar\pi_{0j}=0$, we get
\[
\sum_{j\geq 1} \bar \pi_{ij}=m_i,\quad \sum_{i\geq 1} \bar \pi_{ij}=n_j,
\]
and so $\pi$ is a transference plan between $\mu$ and $\nu$. It only remains to check that $\pi$ is concentrated on $\Gamma$: Given any $(i,j)\not\in \gamma$, then for any $N$ and any $k\in K(i),\;l\in L(j)$, we have that $(k,l)\not\in \gamma_N$. Therefore $\pi^N_{k,l}=0$ and 
\[
\bar\pi_{ij}^N=\sum_{k\in K(i),\;l\in L(j)} \frac{\pi^N_{kl}}{N}=0,
\]
so that we also have that $\bar \pi_{ij}=0$, thus  proving that $\pi$ is indeed concentrated on $\Gamma$.

\end{proof}

We are now ready to prove Theorem \ref{thm:sub}.

\begin{proof}[Proof of Theorem \ref{thm:sub}]
  First of all by density, we can assume that both $\mu$ and $\nu$ are compactly supported on some large ball $B(0,R)$.

Define $\Gamma_x$ and $\Gamma_y$ the projections of $\Gamma$,
  \[
\Gamma_x=\{x\in \R^n\,|\;\exists y\in \R^n,\ (x,y)\in \Gamma\},\quad \Gamma_y=\{y\in \R^n\,|\;\exists x\in \R^n,\ (x,y)\in \Gamma\}.
\]
Of course $\Gamma_x,\;\Gamma_y$ are closed. Moreover $\mu$ is supported on $\Gamma_x$: For any open set $O$ with $O\cap\Gamma_x=\emptyset$ then for any $y\in \R^n$ and any $x\in O$, $(x,\,y)\not\in \Gamma$ so
\[
\{y\in \R^n\,|\;\exists x\in O,\ (x,y)\in \Gamma\}=\emptyset,
\]
and therefore $\mu(O)=0$ by assumption \eqref{assumemunu}. Similarly $\nu$ is supported on $\Gamma_y$.
\medskip

For any $k\in \mathbb{N}$, we define the hypercubes $C_i^k$ of diameter $2^{-k}$, centered at points $x_i \in 2^{-k} \mathbb{Z}^n$  and that cover a fixed selected hypercube $C_0$ s.t. $B(0,R)\subset C_0$. This decomposition is obviously hierarchical since $C_i^k$ is composed of exactly $2^{n}$ small hypercubes $C_j^{k+1}$.

By shifting the hypercubes if necessary, we may assume that $\mu\left(\bigcup_i \partial{C}_{i}^k \right)=0$. For any $k$, we define an approximation $\mu_N$ with $N=C\,2^{kn}$ points,
\[
\mu_N=\sum_{i=1}^N m_i^N\,\delta_{x_i},\quad m_i^N=\mu(C_i^k).
\]
We also define $\nu_N$ in the same manner. Both $\mu_N$ and $\nu_N$ remain probability measures since $\sum_i m_i^N=\sum_i \mu(C_i^k)=\mu(\bigcup C_i^k)$ as $\mu\left(\bigcup_i \partial {C}_{i}^k \right)=0$.

Finally we define $\Gamma_N$ as the union $\bigcup_{(i,j)\in \gamma_N} C_i^k\times C_j^k$ where
\[
\gamma_N=\{(i,j)\,|\; \exists x\in C_i^k,\ \exists y\in C_j^k,\ (x,y)\in \Gamma\}.
\]
We observe of course that $\Gamma\subset \Gamma_N$ and that $d(\Gamma_N,\Gamma)\leq C\,2^{-k}\to 0$ as $N\to \infty$.

Consider now any subset $I$ of indices $i$ and define $O=\bigcup_{i\in I} C_i^k$.
By our construction and assumption \eqref{assumemunu}
\[
\mu(O)=\sum_{i\in I} m_i^N\leq \nu\left(\{y\in \R^n\,|\;\exists x\in O,\ (x,y)\in \Gamma\}\right).
\]
By the definition of $O$, we have
\[
\{y\in \R^n\,|\;\exists x\in O,\ (x,y)\in \Gamma\}=\bigcup_{i\in I} \{y\in \R^n\,|\;\exists x\in C_i^k,\ (x,y)\in \Gamma\}.
\]
And since $\Gamma\subset \Gamma_N$, we deduce
\[\begin{split}
\{y\in \R^n\,|\;\exists x\in O,\ (x,y)\in \Gamma\}&\subset \bigcup_{i\in I} \{y\in \R^n\,|\;\exists x\in C_i^k,\ (x,y)\in \Gamma_N\}\\
&\subset\bigcup_{\{j\,|\, \exists i\in I\;s.t.\;(i,j)\in \gamma_N\}} C_j^k,
\end{split}
\]
by the definition of $\gamma_N$. Hence, since $\nu\left(\bigcup_i \partial {C}_{i}^k \right)=0$
\[
\begin{split}
  \nu\left(\{y\in \R^n\,|\;\exists x\in O,\ (x,y)\in \Gamma\}\right)& \leq \sum_{\{j\,|\, \exists i\in I\;s.t.\;(i,j)\in \gamma_N\}} \nu(C_j^k)= \sum_{\{j\,|\, \exists i\in I\;s.t.\;(i,j)\in \gamma_N\}} n_j^N 
\end{split}
\]
Therefore
\[
\sum_{i\in I} m_i^N\leq 
\sum_{\{j\,|\, \exists i\;s.t.\;(i,j)\in \gamma_N\}} n_j^N
\]
which is the first inequality in \eqref{assumemunudiscrete}.
The proof is similar when reversing the roles of $\mu_N$ and $\nu_N$ and this allows us to conclude that $\mu_N$ and $\nu_N$ satisfy \eqref{assumemunu} with $\Gamma_N$.

\medskip

We can thus apply Proposition \ref{prop:sub2} to get the existence of a transference plan $\pi_N$ concentrated on $\Gamma_N$ and with marginals $\mu_N$ and $\nu_N$.

\medskip

By the tightness of $\pi_N$, we can extract a converging subsequence (still denoted by $N$ for simplicity) s.t. $\pi_N \to \pi$ for the weak-* topology of measures.
Trivially $\pi$ has marginals $\mu$ and $\nu$. 

\medskip

To conclude the proof, we need to show that $\pi$ is concentrated on $\Gamma$: Consider any open set $O$ with $O\cap \Gamma=\emptyset$ and any continuous function $\phi$ on $\R^{2n}$ with compact support on $O$.

We claim that $\sup_{\Gamma_N} |\phi|\to 0$ as $N$ tends to $\infty$. Indeed assume, by contradiction, that there exists $(x_N,\;y_N)\in \Gamma_N$ and $\eta>0$ s.t. $\phi(x_N,y_N)>\eta$. Since $\Gamma_N\subset B(0,2R)$ then we can extract converging subsequences $x_N\to x$ and $y_N\to y$. Since $d(\Gamma_N,\Gamma)\to 0$ then $(x,y)\in \Gamma$ but, since $\phi$ is continuous, we have that $\phi(x,y)>\eta$. Recalling that $\phi$ has compact support in $O$ with $O\cap \Gamma=\emptyset$ gives a contradiction.

We thus have
\[
\int \phi\,d\pi_N=\int_{\Gamma_N} \phi\,d\pi_N\leq \sup_{\Gamma_N} |\phi|\longrightarrow 0,\quad \mbox{as}\ N\to \infty,
\]
which gives $\int \phi\,d\pi=0$, proving that $\pi$ is concentrated on $\Gamma$ and completing the proof of Theorem~\ref{thm:sub}.
  \end{proof}

\appendix

\section{Proof of Theorem \ref{th:strictConvexity2D}}\label{app:2d}
As in the proof of our main theorem, we denote by $x=(\xp,\xo)$ the points in $\Omega \subset \RR^2$
where $\xp$ is the coordinate along the line $H$ and $\xo$ the orthogonal coordinate.
We then have (the proof is similar to that of Lemma 	\ref{lemma:SubdifferentialBound}) that for all $x\in \Omega_1$ such that $|x_\pl|\leq \ell/2$ there holds
\begin{equation}\label{eq:psixp}
|\pa_{\xp}\psi(x) | \leq \frac{2K }{\ell}\, \left(|x_\pe|+\gamma \right).
\end{equation}

Next, we note that the fact that $\det D^2 \psi \geq \lambda^{-2}$ implies that
$$
\pa_{\xp\xp} \psi \, \pa_{\xo \xo} \psi  \geq  \lambda^{-2},
$$
and the convexity of $\psi$ gives $\pa_{\xp\xp} \psi$,  $\pa_{\xo \xo} \psi \geq 0$.
We deduce
\begin{align*}
\left(\int_{-\frac{\ell}{2}}^{\frac{\ell}{2}} \lambda^{-1}\, d\xp\right)^2 
& \leq \left(\int_{-\frac{\ell}{2}}^{\frac{\ell}{2}} |\pa_{\xp\xp} \psi | ^{1/2} |\pa_{\xo\xo} \psi |^{1/2} dx_{\parallel}\right)^2 \\
&  \leq \int_{-\frac{\ell}{2}}^{\frac{\ell}{2}} \pa_{\xp\xp} \psi  \, d\xp \int_{-\frac{\ell}{2}}^{\frac{\ell}{2}} \pa_{\xo\xo} \psi  \, d\xp \\
& \leq \left[\pa_{\xp}\psi \left(\frac{\ell}{2}, x_{\perp}\right)- \pa_{\xp}\psi \left(-\frac{\ell}{2},x_{\perp}\right)\right] \int_{-\frac{\ell}{2}}^{\frac{\ell}{2}}\pa_{\xo\xo} \psi  dx_{\parallel},
\end{align*}
which implies (using \eqref{eq:psixp})
\[
\frac{\ell^3 }{4\lambda^2 K ( |x_{\perp}| + \gamma)} \leq \int_{-\frac{\ell}{2}}^{\frac{\ell}{2}} \pa_{\xo\xo} \psi dx_{\parallel} \mbox{.}
\]

Finally, integrating with respect to $x_{\perp}$ we get
\begin{align*}
\frac{\ell^3 }{4 \lambda^2 K} \int_{0}^{\delta} \frac{d x_{\perp}}{ x_{\perp} + \gamma } dx_{\perp}
&  \leq  \int_{-\frac{\ell}{2}}^{\frac{\ell}{2}} \int_{0}^{\delta} \pa_{\xo\xo} \psi \,  dx_{\perp} \, dx_{\parallel} \\ 
 & \leq \int_{-\frac{\ell}{2}}^{\frac{\ell}{2}} [\pa_{\xo} \psi (x_{\parallel},\delta)-\pa_{\xo} \psi (x_{\parallel},0)] \, dx_{\parallel} \\
& \leq 2 K \ell,
\end{align*}
and \eqref{eq:estcontinuous} follows.


\section{Proof of Theorem \ref{th:affinebound}}\label{app:2}
We have $1\leq d <n$ and we choose a system of coordinates 
$$x=(\xp,\xo)\in \RR^d\times \RR^{n-d} \quad \mbox{ with }  \xp=(x_1,\dots x_d) \mbox{  and } \xo=(x_{d+1},\dots x_n),$$
so that $H=\{\xo=0 \}$.
For $\ell<\delta/2$, we have $B^d_{\ell}(0) \times B^{n-d}_{\delta/2}(0)\subset \Omega$, and 
 the following lemma is the equivalent of Lemma 	\ref{lemma:SubdifferentialBound} in this higher dimensional setting (the proof is similar),
\begin{lemma}\label{lem:bd}
For all $\xp\in B^d_{\ell/2}(0)$ and for all $\xo\in B^{n-d}_{\delta/2}$ we have 
\begin{equation}\label{eq:bd} 
|\na_{\xp} \psi(\xp,\xo) | \leq \frac 2 \ell K( |\xo|+\gamma).
\end{equation}
\end{lemma}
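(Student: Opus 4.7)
The plan is to mimic the proof of Lemma \ref{lemma:SubdifferentialBound} from the two-dimensional setting, adapted to the splitting $x=(\xp,\xo)\in\RR^d\times\RR^{n-d}$. The essential ingredients are already present in the hypotheses of Theorem \ref{th:affinebound}: $\psi\geq 0$ everywhere in $\Omega$, the uniform gradient bound $|\nabla\psi|\leq K:=\|\nabla\psi\|_{L^\infty(\Omega_1)}$, and the definition of $\gamma$ which gives the one-sided bound $\psi(\xp,0)\leq K\gamma$ for every $\xp\in B^d_\ell(0)$.

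Fix $(\xp,\xo)$ as in the lemma and set $u:=\nabla_\xp\psi(\xp,\xo)$; we may assume $u\neq 0$. Choose the point on $H$ in the direction of $u$, namely $b:=\xp+\frac{\ell}{2}\frac{u}{|u|}\in\RR^d$. Since $|\xp|\leq \ell/2$, we have $|b|\leq \ell$, so $(b,0)\in B^n_\ell(0)\cap H\subset \Omega$. The convexity of $\psi$ yields the supporting hyperplane inequality
\[
\psi(b,0)\;\geq\;\psi(\xp,\xo)+\nabla_\xp\psi(\xp,\xo)\cdot(b-\xp)+\nabla_\xo\psi(\xp,\xo)\cdot(-\xo).
\]
By the choice of $b$, the first inner product on the right equals $\tfrac{\ell}{2}|u|$. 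Rearranging and using $\psi(\xp,\xo)\geq 0$, $\psi(b,0)\leq K\gamma$, and $|\nabla_\xo\psi(\xp,\xo)\cdot\xo|\leq K|\xo|$ (the last by Cauchy--Schwarz together with $|\nabla\psi|\leq K$), one obtains
\[
\tfrac{\ell}{2}|u|\;\leq\;\psi(b,0)-\psi(\xp,\xo)+\nabla_\xo\psi(\xp,\xo)\cdot\xo\;\leq\;K\gamma+K|\xo|,
\]
which is precisely \eqref{eq:bd} after dividing by $\ell/2$.

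There is no serious obstacle here; the argument is a direct higher-dimensional transcription of Lemma \ref{lemma:SubdifferentialBound}, with the role of the endpoint $b=(\ell,0)$ in the 2D case played by the point $(b,0)\in H$ chosen in the direction of the horizontal gradient. The only point that deserves a line of justification is the inclusion $(b,0)\in\Omega$, which uses the restriction $\xp\in B^d_{\ell/2}(0)$ together with $\ell<\delta/2$ so that $B^d_\ell(0)\times B^{n-d}_{\delta/2}(0)\subset\Omega$, as noted just before the lemma statement.
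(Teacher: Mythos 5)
Your proof is correct and is exactly the generalization the paper has in mind when it says the proof is "similar to that of Lemma \ref{lemma:SubdifferentialBound}." The one genuinely new ingredient compared to the two-dimensional case is the adaptive choice $b=\xp+\tfrac{\ell}{2}\,u/|u|$ (in the 2D lemma the paper essentially uses the fixed endpoint $b=(\ell,0)$ and handles the sign of $z_\pl$ implicitly; in $\RR^d$ with $d>1$ one must align the test point with $\nabla_\xp\psi$), and you handle it cleanly: the inclusion $(b,0)\in B^n_\ell(0)\cap H\subset\Omega$ follows from $|\xp|\leq\ell/2$ and the paper's remark that $B^d_\ell(0)\times B^{n-d}_{\delta/2}(0)\subset\Omega$, and the three bounds $\psi(b,0)\leq K\gamma$, $\psi(\xp,\xo)\geq 0$, and $|\nabla_\xo\psi\cdot\xo|\leq K|\xo|$ are exactly what the statement of Theorem \ref{th:affinebound} provides. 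No gaps.
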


The starting point of the proof of Theorem \ref{th:affinebound} is the following  consequence of
Fischer's inequality
\begin{equation}\label{eq:fisher}
 \det \left( D^2_{\xp} \psi\right)\det \left( D^2_{\xo} \psi \right)\geq \det ( D^2 \psi )\geq \lambda^{-2}.
\end{equation}
Integrating \eqref{eq:fisher} with respect to $\xp$ after taking the square root, we get for all $\xo\in B^{n-d}_{\delta/2}$
\begin{align*}
\lambda^{-1}  \mathcal L^d (B^d_{\ell/2} )
& \leq \int_{B_{\ell/2}^d} \left( \det D^2_{\xp} \psi\right)^{1/2}\left( \det D^2_{\xo} \psi \right)^{1/2}\, d\xp \\
& \leq \left(  \int_{B_{\ell/2}^d} \det D^2_{\xp} \psi\, d\xp \right)^{1/2}\left(\int_{B_{\ell/2}^d}  \det D^2_{\xo} \psi \, d\xp \right)^{1/2}\\
& \leq \left(  \mathcal L^d\left(\na_{\xp} \psi (B_{\ell/2}^d\times\{\xo\}\right)  \right)^{1/2}\left(\int_{B_{\ell/2}^d}  \det D^2_{\xo} \psi (\xp,\xo)\, d\xp	 \right)^{1/2},
\end{align*}
where we used the fact that for a convex function $\phi$, the integral $\int_U \det D^2\phi \, dx$ is the volume of the image of $U$ under $\nabla \phi$.

Using \eqref{eq:bd}  we deduce
$$
\lambda^{-2} \ell^{2d} \leq C\left( \frac 1\ell  K ( |\xo|+\gamma) \right)^d \int_{B_{\ell/2}^d}  \det D^2_{\xo} \psi(\xp,\xo) \, d\xp, 
$$
which implies in particular
\begin{align*}
\int_{B^{n-d}_{\delta/2}} \frac{ \lambda^{-2} \ell^{3d}}{ K^d ( |\xo|+\gamma )^d} \, d\xo 
& \leq C \int_{B^{n-d}_{\delta/2}}\int_{B_\ell^d}  \det D^2_{\xo} \psi \, d\xp d\xo\\
& \leq C \int_{B_\ell^d}  \int_{B^{n-d}_{\delta/2}} \det D^2_{x''} \psi \, d\xo d\xp\\
& \leq C \int_{B_\ell^d} \mathcal L^{n-d}\left( \na_{\xo} \psi(\{\xp \} \times B^{n-d}_{\delta/2})\right) \, d\xp\\
& \leq  C \ell^d K^{n-d}.
\end{align*}
We finally obtain that
\begin{equation} \label{eq:bd2}
  \ell^{2d} \int_{B^{n-d}_{\delta/2}} \frac{1}{( |\xo|+\gamma )^d} \, d\xo  \leq  C\lambda^2 K^n,
\end{equation}
where we can write
$$ \int_{B^{n-d}_{\delta/2}} \frac{1}{( |\xo|+\gamma )^d} \, d\xo  = \int_0^{\delta/2}   \frac{r^{n-d-1}}{( r +\gamma )^d}\, dr = \gamma^{n-2d}  \int_0^{\delta/2\gamma}   \frac{r^{n-d-1}}{( r +1 )^d}\, dr $$
and  \eqref{eq:estgammand} follows.


\bibliography{mybib}{}

\begin{thebibliography}{10}

\bibitem{aleksandrov1942smoothness}
AD~Aleksandrov.
\newblock Smoothness of the convex surface of bounded gaussian curvature.
\newblock In {\em Dokl. Akad. Nauk SSSR}, volume~36, pages 211--216, 1942.

\bibitem{bonsante2017equivariant}
Francesco Bonsante and Fran{\c{c}}ois Fillastre.
\newblock The equivariant minkowski problem in minkowski space.
\newblock In {\em Annales de l'Institut Fourier}, volume~67, pages 1035--1113,
  2017.

\bibitem{brenier1987decomposition}
Yann Brenier.
\newblock D{\'e}composition polaire et r{\'e}arrangement monotone des champs de
  vecteurs.
\newblock {\em CR Acad. Sci. Paris S{\'e}r. I Math.}, 305:805--808, 1987.

\bibitem{brenier1991polar}
Yann Brenier.
\newblock Polar factorization and monotone rearrangement of vector-valued
  functions.
\newblock {\em Communications on pure and applied mathematics}, 44(4):375--417,
  1991.

\bibitem{caffarelli1990localization}
Luis~A Caffarelli.
\newblock A localization property of viscosity solutions to the monge-amp\`ere
  equation and their strict convexity.
\newblock {\em Annals of Mathematics}, 131(1):129--134, 1990.

\bibitem{caffarelli1991some}
Luis~A Caffarelli.
\newblock Some regularity properties of solutions of monge-amp\`ere equation.
\newblock {\em Communications on pure and applied mathematics},
  44(8-9):965--969, 1991.

\bibitem{caffarelli1992boundary}
Luis~A Caffarelli.
\newblock Boundary regularity of maps with convex potentials.
\newblock {\em Communications on pure and applied mathematics},
  45(9):1141--1151, 1992.

\bibitem{caffarelli1992regularity}
Luis~A Caffarelli.
\newblock The regularity of mappings with a convex potential.
\newblock {\em Journal of the American Mathematical Society}, 5(1):99--104,
  1992.

\bibitem{caffarelli1996boundary}
Luis~A Caffarelli.
\newblock Boundary regularity of maps with convex potentials--ii.
\newblock {\em Annals of mathematics}, 144(3):453--496, 1996.

\bibitem{philippis2013regularity}
Guido De~Philippis.
\newblock {\em Regularity of optimal transport maps and applications},
  volume~17 of {\em Tesi. Scuola Normale Superiore di Pisa (Nuova Series)
  [Theses of Scuola Normale Superiore di Pisa (New Series)]}.
\newblock Edizioni della Normale, Pisa, 2013.

\bibitem{philippisfigalli15}
Guido De~Philippis and Alessio Figalli.
\newblock Partial regularity for optimal transport maps.
\newblock {\em Publ. Math. Inst. Hautes \'{E}tudes Sci.}, 121:81--112, 2015.

\bibitem{Figalli10}
Alessio Figalli.
\newblock Regularity properties of optimal maps between nonconvex domains in
  the plane.
\newblock {\em Comm. Partial Differential Equations}, 35(3):465--479, 2010.

\bibitem{FigalliKim10}
Alessio Figalli and Young-Heon Kim.
\newblock Partial regularity of {B}renier solutions of the {M}onge-{A}mp\`ere
  equation.
\newblock {\em Discrete Contin. Dyn. Syst.}, 28(2):559--565, 2010.

\bibitem{Goldman18}
M.~Goldman, F.~Otto, and M.~Huesmann.
\newblock A large-scale regularity theory for the monge-ampere equation with
  rough data and application to the optimal matching problem.
\newblock {\em arXiv:1808.09250}, 2018.

\bibitem{Goldman17}
Michael~P Goldman and Felix Otto.
\newblock A variational proof of partial regularity for optimal transportation
  maps.
\newblock {\em arXiv:1704.05339}, 2017.

\bibitem{gutierrez2001monge}
Cristian~E Guti{\'e}rrez.
\newblock {\em The Monge-Amp\`ere equation}, volume~44.
\newblock Springer, 2001.

\bibitem{heinz1959differentialungleichung}
Erhard Heinz.
\newblock {\"U}ber die differentialungleichung $ 0< \alpha \leq rt- s^2 \leq
  \beta < \infty$.
\newblock {\em Mathematische Zeitschrift}, 72(1):107--126, 1959.

\bibitem{kantorovitch1958translocation}
Leonid Kantorovitch.
\newblock On the translocation of masses.
\newblock {\em Management Science}, 5(1):1--4, 1958.

\bibitem{rockafellar1970convex}
R~Tyrrell Rockafellar.
\newblock {\em Convex analysis}, volume~28.
\newblock Princeton university press, 1970.

\bibitem{rockafellar2009variational}
R~Tyrrell Rockafellar and Roger J-B Wets.
\newblock {\em Variational analysis}, volume 317.
\newblock Springer Science \& Business Media, 2009.

\bibitem{trudinger2008monge}
Neil~S Trudinger and Xu-Jia Wang.
\newblock The monge-amp\`ere equation and its geometric applications.
\newblock {\em Handbook of geometric analysis}, 1:467--524, 2008.

\bibitem{villani2003topics}
C{\'e}dric Villani.
\newblock {\em Topics in optimal transportation}.
\newblock Number~58. American Mathematical Soc., 2003.

\bibitem{villani2008optimal}
C{\'e}dric Villani.
\newblock {\em Optimal transport: old and new}, volume 338.
\newblock Springer Science \& Business Media, 2008.

\bibitem{Yu07}
Yifeng Yu.
\newblock Singular set of a convex potential in two dimensions.
\newblock {\em Comm. Partial Differential Equations}, 32(10-12):1883--1894,
  2007.

\end{thebibliography}
\bibliographystyle{plain}
\end{document}